\newcommand{\C}{{\mathbb C}}
\newcommand{\Z}{{\mathbb Z}}
\newcommand{\N}{{\mathbb N}}
\newcommand{\R}{{\mathbb R}}
\newcommand{\ep}{{\varepsilon}}
\newcommand{\cF}{{\mathcal F}}
\newcommand{\cB}{{\mathcal B}}
\newcommand{\cA}{{\mathcal A}}
\newcommand{\cI}{{\mathcal I}}
\newcommand{\cK}{{\mathcal K}}
\newcommand{\cP}{{\mathrm{Ped}}}
\newcommand{\cU}{{\mathcal U}}
\newcommand{\cO}{{\mathcal O}}
\newcommand{\cJ}{{\mathcal J}}
\newcommand{\Cs}{{$C^*$-al\-ge\-bra}}
\newcommand{\sh}{{$^*$-ho\-mo\-mor\-phism}}
\newtheorem{theorem}{Theorem}[subsection]
\newtheorem{lemma}[theorem]{Lemma}
\newtheorem{corollary}[theorem]{Corollary}
\newtheorem{proposition}[theorem]{Proposition}
\theoremstyle{definition}
\newtheorem{definition}[theorem]{Definition}
\newtheorem{remark}[theorem]{Remark}
\newtheorem{example}[theorem]{Example}
\numberwithin{equation}{section}
\numberwithin{theorem}{section}
\date{\empty}
\title{Fixed-points in the cone of traces on a \Cs}
\author{Mikael R\o rdam\thanks{Supported  by the Danish Council for Independent Research, Natural Sciences, and the Danish National Research Foundation (DNRF) through the Centre for Symmetry and Deformation at the University of Copenhagen. }}
\begin{document} 

\maketitle

\centerline{\emph{Dedicated to the memory of John Roe}}

\begin{abstract}

\noindent Nicolas Monod introduced in \cite{Monod:cones} the class of groups with the fixed-point property for cones, characterized by always admitting a non-zero fixed point whenever acting (suitably) on proper weakly complete cones. He proved that his class of groups contains the class of groups with subexponential growth and is contained in the class of supramenable groups. In this paper we investigate what Monod's results say about the existence of invariant traces on (typically non-unital) \Cs s equipped with an action of a group with the fixed-point property for cones. As an application of these results we provide results on the existence (and non-existence) of traces on the (non-uniform) Roe algebra. 
\end{abstract}

\section{Introduction} 

\noindent Whenever a discrete amenable group acts on a unital \Cs{} with at least one tracial state, then the \Cs{} admits an invariant tracial state, and the crossed product \Cs{}  admits a tracial state. This, moreover, characterizes amenable groups. The purpose of this paper is to find statements, similar to this well-known fact, about the existence of invariant traces on non-unital \Cs s using the results of the recent paper by Monod, \cite{Monod:cones}, in which the class of groups with the \emph{fixed-point property for cones} is introduced and developed.

In \cite{Monod:cones}, a group is said to have the fixed-point property for cones if whenever it acts continuously on a proper weakly complete cone (embedded into a locally convex topological vector space), such that the action is of \emph{cobounded type} and \emph{locally bounded}, then there is a non-zero fixed point in the cone. Being of cobounded type is an analog of an action on a locally compact Hausdorff space being \emph{co-compact}, see \cite[Definition 2]{Monod:cones}. The action is locally bounded if there is a non-zero bounded orbit, see \cite[Definition 1]{Monod:cones}. 

Even the group of integers, $\Z$, can fail to leave invariant any non-zero trace when acting on a non-unital \Cs. For example, the stabilization of the Cuntz algebra $\cO_2$ is the crossed product of the stabilized CAR-algebra $\cA$ with an action of $\Z$ that scales the traces on $\cA$ by a factor of 2. In particular, there are no non-zero invariant traces on $\cA$. The cone of (densely defined lower semi-continuous) traces on $\cA$ is isomorphic to the cone $[0, \infty)$, and the induced action of $\Z$ on this cone is multiplication by 2, which of course fails to be locally bounded. 

The action of any group $G$ on a locally compact Hausdorff space $X$ induces a locally bounded action on the cone of Radon measures on $X$ (which again is the same as the cone of densely defined lower semi-continuous traces on $C_0(X)$). However, any infinite group can act on the locally compact non-compact Cantor set $\mathbf{K}^*$ in a non co-compact way so that there are no non-zero invariant Radon measures, cf.\ \cite[Section 4]{MatuiRor:universal}. Such an action of $G$ on the Radon measures on $\mathbf{K}^*$ fails to be of cobounded type. 

The two examples above explain why one must impose conditions on the action, such as being of cobounded type and being locally bounded, to get meaningful results on when invariant traces exist. 

Monod proves in \cite{Monod:cones} that the class of groups with the fixed-point property for cones contains the class of groups of subexponetial growth and is contained in the class of supramenable groups introduced by Rosenblatt in \cite{Rosenblatt:supramenable}. It is not known if there are supramenable groups of exponential growth, so the three classes of groups could coincide, although the common belief seems to be that they all are different. Monod proved a number of permanence properties for his class of groups, prominently including that it is closed under central extensions, see \cite[Theorem 8]{Monod:cones}. He also shows that the property of having the fixed-point property for cones can be recast in several ways, including the property that for each non-zero positive function $f \in \ell^\infty(G)$ there is a non-zero  invariant  positive linear functional (called an \emph{invariant integral} in \cite{Monod:cones}) on the subspace $\ell^\infty(G,f)$ of all bounded functions $G$-dominated by $f$.

We begin our paper in Section~\ref{sec:traces-ideals} by recalling properties of possibly unbounded positive traces on (typically non-unital) \Cs s, including when they are lower semi-continuous and when they are singular. By default, all traces in this paper are assumed to be positive. Interestingly, many of the traces predicted by Monod turn out to be singular. Traces on \Cs s were treated systematically already by Dixmier in \cite{Dixmier:traces}.  Elliott, Robert and Santiago consider in  \cite{EllRobSan:traces} the cone of lower semi-continuous traces as an invariant of the \Cs, they derive useful properties of this cone, and they make the point that such traces most conveniently are viewed as maps defined on the positive cone of the \Cs{} taking values in $[0,\infty]$. While this indeed is a convenient way to describe unbounded traces, and one we  in part shall use here, the cone structure from the point of view of this paper is sometimes better portrayed when traces are viewed as linear functionals on a suitable domain: a hereditary symmetric algebraic ideal in the \Cs.

By a theorem of G.K.\ Pedersen one can identify the cone of  lower semi-continuous densely defined traces on a \Cs{} with the cone of traces defined on its Pedersen ideal. In the case where the primitive ideal space of the \Cs{} is compact,  we show that there are non-zero lower semi-continuous densely defined traces if and only if the stabilization of the \Cs{} contains no full properly infinite projections, thus extending well-known results from both the unital and the simple case. A similar compactness condition appears in our reformulation of coboundedness of the action of the group on the cone of traces. 

In Section~\ref{sec:inv-traces} we explain when a \Cs{} equipped with an action of a group $G$ with the fixed-point property for cones admits an invariant densely defined lower semi-continuous trace. Given that the cone of densely defined lower semi-continuous traces is always  proper and weakly complete, all
we have to do is to explain when the action of the group on this cone is of cobounded type, respectively, when it is locally bounded. The former can be translated into a compactness statement, as mentioned above, while the latter just means that there exists a non-zero trace which is bounded on all $G$-orbits. 

In Section~\ref{sec:inv-integrals} we examine the situation where the group $G$ acts on $\ell^\infty(G)$ with the aim of describing for which positive $f \in \ell^\infty(G)$ the invariant integrals on $\ell^\infty(G)$ normalizing $f$ are lower semi-continuous, respectively, singular. As it turns out, frequently they must be singular. We use this to give an example of a $G$-invariant densely defined trace on a \Cs{} that does not extend to a trace on the crossed product.

Finally, in Section~\ref{sec:roe}, we consider the particular example of invariant traces on $\ell^\infty(G,\cK)$ and traces on the Roe algebra $\ell^\infty(G,\cK) \rtimes G$. We show that $\ell^\infty(G,\cK)$ only has the ``obvious'' densely defined lower semi-continuous traces, and hence that it never has non-zero $G$-invariant ones, when $G$ is infinite. In many cases, however, there are invariant lower semi-continuous traces with smaller domains, such as domains defined by projections in $\ell^\infty(G,\cK)$. Specifically we show that any projection in  $\ell^\infty(G,\cK)$ whose dimension (as a function on $G$) grows subexponentially is normalized by an invaraint  lower semi-continuous trace if $G$ has the fixed-point property for cones. In general, for any non-locally finite group $G$, there are (necessarily exponentially growing) projections in $\ell^\infty(G,\cK)$ not normalized by any invariant trace, and which are properly infinite in the Roe algebra, while the Roe algebra of a locally finite group is always stably finite.

I thank Nicolas Monod, Nigel Higson, Guoliang Yu, and Eduardo Scarparo for useful discussions related to this paper. I also thank the referee for suggesting improvements of the exposition and for pointing out a couple mistakes in an earlier version of this paper.

\section{Hereditary ideals and cones of traces}
\label{sec:traces-ideals}

\noindent By a \emph{hereditary ideal} $\cI$ in a \Cs{} $\cA$ we shall mean an algebraic two-sided self-adjoint ideal satisfying the hereditary property: if $0 \le a \le b$, $b \in \cI$ and $a \in \cA$, then $a \in \cI$. If $x^*x \in \cI$ whenever $xx^* \in \cI$, for all $x \in \cA$, then we say that $\cI$ is \emph{symmetric}. All closed two-sided ideals are hereditary and symmetric. 

If a group $G$ acts on the \Cs{} $\cA$, then we refer to an ideal $\cI$ as being \emph{$G$-invariant} (or just invariant) if it is invariant under the group action. If $M$ is a subset of $\cA$, then let $\cI_\cA(M)$, respectively, $\overline{\cI_\cA}(M)$, denote the smallest hereditary ideal in $\cA$, respectively, the smallest closed two-ideal in $\cA$,  which contains $M$; and let similary $\cI_\cA^G(M)$ and $\overline{\cI_\cA^G}(M)$ denote the smallest $G$-invariant hereditary ideal, respectively,  the smallest $G$-invariant closed two-ideal in $\cA$ which contains $M$.

\begin{example} \label{ex:Pedersen}
 The \emph{Pedersen ideal},  $\cP(\cA)$, of a \Cs{} $\cA$ is the (unique) smallest dense ideal in $\cA$, see \cite[Section 5.6]{Ped:C*-aut}. It is a hereditary symmetric ideal, and, even better: for each $x \in \cP(\cA)$, the  hereditary sub-\Cs, $\overline{x\cA x^*}$, of $\cA$ is contained in  $\cP(\cA)$. In particular, the Pedersen ideal of $\cA$ is closed under continuous function calculus on its normal elements, as long as the continuous function vanishes at $0$. If $x\in \cA$ is such that $x^*x \in \cP(\cA)$, then $x \in \cP(\cA)$, which shows that the Pedersen ideal is also symmetric. 
\end{example}

\begin{example} Not all self-adjoint two-sided ideals in a \Cs{} are hereditary. Consider for example the commutative \Cs{} $\cA = C([-1,1])$, the element $f \in C([-1,1])$ given by $f(t) = |t|$, for $t \in [-1,1]$, and the (two-sided) self-adjoint ideal $\cI = \cA f$ in $\cA$. The function $g(t) = \max\{-\frac12 t,t\}$, $t \in [-1,1]$, then satisfies $0 \le g \le f$, but $g \notin \cI$.

Hereditary ideals need not be spanned (or even generated) by their positive elements.  Indeed, take again $\cA = C([-1,1])$ and let $\cI = \cA \, \iota$, where $\iota(t) = t$, for $t \in [-1,1]$. If $f \in \cI \cap \cA^+$, then $f = g \cdot \iota$, for some $g \in \mathcal{P}$, where $\mathcal{P}$ is the set of functions $g \in \cA$ such that $g(t) \le 0$, for $t \in [-1,0]$, and $g(t) \ge 0$, for $t \in [0,1]$. Since $g(0)=0$, for all $g \in \mathcal{P}$, it is not possible to write $\iota \in \cI$ as a linear combination of functions in $\cI \cap \cA^+$. To see that $\cI$ is hereditary, suppose that $0 \le h \le f$, where $f \in \cI \cap \cA^+$ and $h \in \cA$. Then $f = g \cdot \iota$, for some $g \in \mathcal{P}$. Hence $|h(t)/t| \le |g(t)|$, for all $t \ne 0$, from which we see that $h \in \cI$. 

Let us also note that (algebraic) two-sided ideals need not be self-adjoint. Consider  the commutative \Cs{} $\cA = C(\mathbb{D})$, where $\mathbb{D}$ is the closed unit disk in the complex plane, the function $f(z) = z$, $z \in \mathbb{D}$, and the (two-sided) ideal $\cI = \cA f$ in $\cA$. Then $f$ belongs to $\cI$, but $f^* = \bar{f}$ does not.

\end{example}

\noindent For a \Cs{} $\cA$ let $\widetilde{\cA}$ denote the unitization of  $\cA$, when it is non-unital, or $\cA$ itself when it already is unital.

\begin{lemma} \label{lm:cone}
Let $\cA$ be a \Cs, and let $C$ be a subcone of $\cA^+$ satisfying:
\begin{enumerate}
\item If $a \in C$ and $x \in \widetilde{\cA}$, then $x^*ax \in C$,
\item $C$ is hereditary: if $0 \le a \le b$, $a \in \cA$ and $b \in C$, then $a \in C$.
\end{enumerate}
Let $\cI$ be the linear span of $C$. Then $\cI$ is a hereditary ideal in $\cA$ and  $\cI \cap \cA^+ = C$. If $C$ is symmetric (in the sense that $x^*x \in C$ implies $xx^* \in C$, for all $x \in \cA$), then so is $\cI$.

Conversely, if $\cI$ is a hereditary ideal in $\cA$ and if $C = \cI \cap \cA^+$, then $C$ is a subcone of $\cA^+$ satisfying (i) and (ii) above. The span, $\cI_0$, of $C$ is a hereditary subideal of $\cI$; and $\cI_0 = \cI$ if and only if $\cI$ is generated as a hereditary ideal by its positive elements. 
\end{lemma}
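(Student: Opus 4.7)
The plan for the forward direction is to establish, in this order: (1) $\cI$ is a two-sided self-adjoint ideal; (2) $\cI \cap \cA^+ = C$; (3) $\cI$ is hereditary and, if $C$ is symmetric, so is $\cI$. The crucial step is (1). For $a \in C$ and $x \in \cA$, the polarization identity
\[
4\, a x \; = \; \sum_{k=0}^{3} i^{k}\,(x+i^{k})^{*}\, a\, (x+i^{k})
\]
exhibits $ax$ as a linear combination of elements of the form $y^{*}ay$ with $y = x + i^{k} \in \widetilde{\cA}$; each such element lies in $C$ by (i), so $ax \in \cI$. The symmetric identity (take $y = 1$, $z = x^{*}$ in $4 z^{*} a y = \sum_{k} i^{k}(y+i^{k}z)^{*}a(y+i^{k}z)$) gives $xa \in \cI$. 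Self-adjointness of $\cI$ is automatic from $C \subseteq \cA^{+}$.

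For (2), $C \subseteq \cI \cap \cA^{+}$ is trivial. Conversely, if $c \in \cI \cap \cA^{+}$, then $c = c^{*}$ is a real linear combination of elements of $C$, and gathering positive- and negative-coefficient terms one writes $c = d - e$ with $d, e \in C$ (using that $C$ is a cone). Since $c \geq 0$, we have $0 \le c \le d$, and (ii) places $c$ in $C$. Step (3) follows at once: if $0 \le a \le b$ with $b \in \cI$, then $b \in \cI \cap \cA^{+} = C$, and (ii) gives $a \in C \subseteq \cI$; and if $C$ is symmetric, $x^{*}x \in \cI$ forces $x^{*}x \in C$, whence $xx^{*} \in C \subseteq \cI$.

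The converse direction is routine bookkeeping. $C = \cI \cap \cA^{+}$ is plainly a hereditary subcone, and (i) holds because, writing $x = \lambda \cdot 1 + y \in \widetilde{\cA}$ with $\lambda \in \C$ and $y \in \cA$, the expansion of $x^{*}ax$ visibly produces an element of $\cI \cap \cA^{+} = C$. By the forward direction, $\cI_{0}$ is then a hereditary ideal contained in $\cI$, and in fact it is the \emph{smallest} hereditary ideal in $\cA$ containing $C$: any such ideal contains the linear span of $C$, and by the forward direction $\cI_{0}$ is itself such a hereditary ideal. Thus $\cI_{0} = \cI$ exactly when $\cI$ coincides with the hereditary ideal generated by its positive elements. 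The only genuine obstacle is step (1), where polarization is indispensable: the hypotheses only control the conjugation $a \mapsto y^{*}ay$ of positive elements by elements of $\widetilde{\cA}$, whereas the ideal property demands plain left and right multiplication by arbitrary $x \in \cA$.
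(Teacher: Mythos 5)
Your proof is correct and follows essentially the same route as the paper: the key step in both is a polarization-type identity expressing $ax$ (and $xa$) as a linear combination of conjugates $y^*ay$ with $y\in\widetilde{\cA}$, after which the identification $\cI\cap\cA^+=C$, hereditariness, symmetry, and the converse all follow exactly as you argue. The paper uses the two shorter identities $xa+ax^*=(x+1)a(x+1)^*-xax^*-a$ and $i(xa-ax^*)=(x-i)a(x-i)^*-xax^*-a$ in place of your four-term polarization sum, but this is a cosmetic difference.
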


\begin{proof} It follows from (i) that if $x \in \cA$ and $a \in C$, then
\begin{eqnarray*}
xa+ax^* &=& (x+1)a(x+1)^* - xax^* - a,\\
i(xa-ax^*) &=& (x-i)a(x-i)^* - xax^*-a,
\end{eqnarray*}
belong to $\cI$, whence $xa$ and $ax^*$ belong to $\cI$. This shows that $\cI$ is an ideal in $\cA$.  Clearly, $\cI$ is self-adjoint. 

A subcone $C$ of $\cA^+$ satisfies $\mathrm{span}(C) \cap \cA^+ = C$ if and only if whenever $a,b \in  C$ are such that $a \le b$, then $b-a \in C$. Hereditary cones clearly have this property, so $\cI \cap \cA^+ = C$. This also shows that $\cI$ is hereditary (because $C$ is hereditary).

It is clear that $C = \cI \cap \cA^+$ has the stated properties if $\cI$ is a hereditary ideal of $\cA$; and $\cI_0$ is a hereditary ideal of $\cA$ by the first part of the lemma. It is contained in $\cI$ and contains by definition all positive elements of $\cI$. That proves the last claim of the lemma.
\end{proof}

\begin{corollary} \label{cor:cone}
Let $\cA$ be a \Cs{} and let $M$ be a non-empty subset of $\cA^+$. Let $C$ be the set of all elements $a \in \cA^+$ for which there exist $n \ge 1$, $e_1, e_2, \dots, e_n \in M$, and $x_1,x_2, \dots, x_n \in \widetilde{\cA}$ such that $a \le \sum_{j=1}^n x_j^*e_jx_j$. Then:
\begin{enumerate}
\item $C$ is a subcone of $\cA^+$ satisfying (i) and (ii) of Lemma~\ref{lm:cone}; 
\item $\cI_\cA(M) = \mathrm{span}(C)$;
\item $\cI_\cA(M) \cap \cA^+ = C$. 
\end{enumerate}
\end{corollary}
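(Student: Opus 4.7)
The plan is to verify (i) directly from the definition of $C$, then deduce (ii) and (iii) by combining (i) with Lemma~\ref{lm:cone} and showing minimality via a sandwiching argument.

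First I would check that $C$ is a subcone. Scalar multiplication by $\lambda \ge 0$ is handled by replacing each $x_j$ by $\sqrt{\lambda}\,x_j$ in a witnessing inequality $a \le \sum_{j=1}^n x_j^*e_jx_j$. Closure under addition is by concatenating the witnessing sums for $a$ and $b$. For condition (i) of Lemma~\ref{lm:cone}, given $a \in C$ with $a \le \sum_j x_j^*e_j x_j$ and $y \in \widetilde{\cA}$, one multiplies on both sides by $y$ to get $y^*ay \le \sum_j (x_jy)^* e_j (x_jy)$, and $x_jy \in \widetilde{\cA}$, so $y^*ay \in C$. Condition (ii) of Lemma~\ref{lm:cone} is immediate: a positive element dominated by an element dominated by $\sum_j x_j^*e_jx_j$ is itself dominated by the same sum. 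If $C$ happens to be symmetric this is inherited automatically (though we do not need that here).

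Next I would apply Lemma~\ref{lm:cone} to $C$. This yields that $\cJ := \mathrm{span}(C)$ is a hereditary ideal of $\cA$ with $\cJ \cap \cA^+ = C$. Moreover $M \subseteq C$, since for $e \in M$ one may take $n=1$, $e_1 = e$, and $x_1 = 1 \in \widetilde{\cA}$, giving $e \le x_1^*e_1x_1$. Hence $\cJ$ is a hereditary ideal of $\cA$ containing $M$, so by the minimality in the definition of $\cI_\cA(M)$ one has $\cI_\cA(M) \subseteq \cJ$.

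For the reverse inclusion, let $\cJ'$ be any hereditary ideal of $\cA$ containing $M$. For each $e_j \in M \subseteq \cJ'$ and $x_j \in \widetilde{\cA}$, the element $x_j^*e_jx_j$ lies in $\cJ'$ (ideals are stable under two-sided multiplication by elements of $\widetilde{\cA}$), and hence so does $\sum_{j=1}^n x_j^*e_jx_j$. Any $a \in C$ is by definition dominated by such a sum, so by the hereditary property $a \in \cJ'$; therefore $C \subseteq \cJ'$ and $\cJ = \mathrm{span}(C) \subseteq \cJ'$. Taking $\cJ' = \cI_\cA(M)$ gives $\cJ \subseteq \cI_\cA(M)$, proving (ii). Finally (iii) follows from (ii) together with $\cJ \cap \cA^+ = C$ already established.

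The only non-cosmetic step is the last paragraph: verifying that a hereditary ideal containing $M$ must absorb all the sandwich-and-dominate combinations that define $C$. This is routine once one remembers that $\widetilde{\cA}$ (rather than $\cA$) is used precisely so that the identity $1$ is available, allowing $M \subseteq C$ to be witnessed trivially.
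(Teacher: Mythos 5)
Your proof is correct and follows the same route as the paper: verify that $C$ satisfies the hypotheses of Lemma~\ref{lm:cone}, conclude that $\mathrm{span}(C)$ is a hereditary ideal containing $M$ with positive part $C$, and then sandwich it against $\cI_\cA(M)$ by observing that $C$ is contained in any hereditary ideal containing $M$. The paper merely compresses the verification of (i) and the final absorption argument into one line each; your expanded version fills in exactly those details.
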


\begin{proof} It is clear that (i) holds, so Lemma~\ref{lm:cone} implies that $\cI:=\mathrm{span}(C)$ is a hereditary ideal in $\cA$ satisfying $\cI \cap \cA^+ = C$. As $M \subseteq C \subseteq \cI$ we conclude that $\cI_\cA(M) \subseteq \cI$. Conversely, $C \subseteq \cI_\cA(M)$, so $\cI \subseteq \cI_\cA(M)$. 
\end{proof} 

\begin{corollary} \label{lm:ideal}
Let $\cA$ be a \Cs{}, let $\alpha$ be an action of a  group $G$ on $\cA$, and let $M$ be a non-empty subset of $\cA^+$.  Then
\begin{enumerate}
\item  $\cI_\cA^G(M) = \cI_\cA(G.M)$, where $G.M = \{\alpha_t(e) : t\in G, e \in M\}$.
\item An element $a \in \cA^+$ belongs to $\cI_\cA^G(M)$ if and only if there exist $n \ge 1$, $t_1, t_2, \dots, t_n \in G$, $y_1,y_2, \dots, y_n \in \widetilde{\cA}$, and $e_1,e_2, \dots e_n \in M$ such that
$a \le \sum_{j=1}^n y_j^*\alpha_{t_j}(e_j)y_j$.
\end{enumerate}
\end{corollary}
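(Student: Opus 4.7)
The plan is to reduce everything to Corollary~\ref{cor:cone} applied to the set $G.M$. The key observation is that $\cI_\cA^G(M)$ and $\cI_\cA(G.M)$ are both hereditary ideals; the $G$-invariance of the latter is what needs verification.

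First I would establish (i) by proving both inclusions. For $\cI_\cA(G.M) \subseteq \cI_\cA^G(M)$: since $\cI_\cA^G(M)$ is $G$-invariant and contains $M$, it contains $G.M$; being a hereditary ideal, it must contain the smallest hereditary ideal generated by $G.M$, namely $\cI_\cA(G.M)$. For the reverse inclusion, I would show that $\cI_\cA(G.M)$ is $G$-invariant. By Corollary~\ref{cor:cone}, every element of $\cI_\cA(G.M) \cap \cA^+$ is dominated by a finite sum $\sum_{j=1}^n x_j^* e_j x_j$ with $e_j \in G.M$ and $x_j \in \widetilde{\cA}$. For any $s \in G$, applying $\alpha_s$ (extended to $\widetilde{\cA}$) yields
\[
\alpha_s(a) \le \sum_{j=1}^n \alpha_s(x_j)^* \alpha_s(e_j) \alpha_s(x_j),
\]
and $\alpha_s(e_j) \in G.M$ because $G.M$ is closed under the $G$-action. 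Hence $\alpha_s(a) \in \cI_\cA(G.M) \cap \cA^+$, and since the ideal is spanned by its positive part (by Corollary~\ref{cor:cone}), $G$-invariance follows. Thus $\cI_\cA(G.M)$ is a $G$-invariant hereditary ideal containing $M$, so it contains $\cI_\cA^G(M)$.

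Then (ii) is immediate: apply Corollary~\ref{cor:cone} to the set $G.M$. Elements of $G.M$ are precisely of the form $\alpha_t(e)$ with $t \in G$ and $e \in M$, so the characterization of $\cI_\cA(G.M) \cap \cA^+$ given there becomes exactly the statement in (ii), after using (i) to identify $\cI_\cA(G.M)$ with $\cI_\cA^G(M)$.

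I do not anticipate any serious obstacle; the only subtle point is checking that $\alpha_s$ extends compatibly to $\widetilde{\cA}$ (which it does canonically by fixing the adjoined unit), so that the inequality $a \le \sum x_j^* e_j x_j$ is preserved under the action with $x_j \in \widetilde{\cA}$. Once that is noted, both (i) and (ii) fall out of Corollary~\ref{cor:cone} essentially formally.
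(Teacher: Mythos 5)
Your proposal is correct and follows the same route as the paper: both inclusions in (i) are obtained from minimality of the two ideals together with the $G$-invariance of $\cI_\cA(G.M)$, and (ii) is then read off from Corollary~\ref{cor:cone} applied to $G.M$. The paper simply asserts the $G$-invariance of $\cI_\cA(G.M)$ without the (correct) verification you supply via the domination characterization and the extension of $\alpha_s$ to $\widetilde{\cA}$.
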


\begin{proof} (i). As $G.M \subseteq \cI_\cA^G(M)$ we see that $\cI_\cA(G.M) \subseteq \cI_\cA^G(M)$. Conversely, as $\cI_\cA(G.M)$ is $G$-invariant, it contains $\cI_\cA^G(M)$. Part (ii) follows from (i) and from Corollary~\ref{cor:cone}.
\end{proof}

\noindent For each non-empty subset $M$ of $\cA^+$ denote by $\cJ_\cA(M)$ the smallest \emph{symmetric} hereditary ideal in $\cA$ containing $M$. If a group $G$ acts on $\cA$, then denote by $\cJ_\cA^G(M)$ the smallest symmetric hereditary $G$-invariant ideal in $\cA$ containing $M$. Since closed two-sided ideals in a \Cs{} always are symmetric, we have
\begin{equation} \label{eq:J-I}
\cI_\cA(M) \subseteq \cJ_\cA(M) \subseteq \overline{\cI_\cA}(M), \qquad \cI_\cA^G(M) \subseteq \cJ_\cA^G(M) \subseteq \overline{\cI_\cA^G}(M).
\end{equation}

\begin{lemma} \label{lm:symmetric-proj}
Let $\cA$ be a \Cs{} and let $M$ be a non-empty subset of projections in $\cA$, then $\cI_\cA(M) = \cJ_\cA(M) = \cP(\cA_0)$, where $\cA_0 = \overline{\cI_\cA}(M)$. 
If $\cA$ is equipped with an action of a group $G$, then $\cI_\cA^G(M) = \cJ_\cA^G(M) = \cP(\cA_1)$, where $\cA_1 = \overline{\cI_\cA^G}(M)$.
\end{lemma}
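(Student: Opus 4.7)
Write $\cA_0 = \overline{\cI_\cA}(M)$. From \eqref{eq:J-I} I already have the chain $\cI_\cA(M) \subseteq \cJ_\cA(M) \subseteq \cA_0$, so the non-equivariant assertion will follow once I establish both $\cJ_\cA(M) \subseteq \cP(\cA_0)$ and $\cP(\cA_0) \subseteq \cI_\cA(M)$. The equivariant version then drops out by applying the non-equivariant one to the set of projections $G.M = \{\alpha_t(e) : t \in G, \, e \in M\}$: Corollary~\ref{lm:ideal} gives $\cI_\cA^G(M) = \cI_\cA(G.M)$ and $\cA_1 = \overline{\cI_\cA}(G.M)$, and the parallel minimality argument shows $\cJ_\cA^G(M) = \cJ_\cA(G.M)$ (the smallest symmetric hereditary ideal containing a $G$-invariant set is itself $G$-invariant, since applying the group action to such an ideal again produces a symmetric hereditary ideal containing $G.M$).

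\textbf{The inclusion $\cJ_\cA(M) \subseteq \cP(\cA_0)$.} I verify that $\cP(\cA_0)$ is a symmetric hereditary ideal of the ambient algebra $\cA$ containing $M$; minimality of $\cJ_\cA(M)$ then finishes. Projections belong to the Pedersen ideal of the \Cs\ they sit in (since $p \cdot p = p$ already places $p$ in the ``compactly supported'' generating set of $\cP(\cA_0)$), so $M \subseteq \cP(\cA_0)$. Because $\cA_0$ is a closed two-sided ideal of $\cA$, heredity of $\cP(\cA_0)$ in $\cA$ reduces to heredity inside $\cA_0$, which is part of Example~\ref{ex:Pedersen}. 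For the ideal property over $\cA$: given $x \in \cA$ and $a \in \cP(\cA_0) \subseteq \cA_0$, the inequality $(xa)^*(xa) \le \|x\|^2 a^*a \in \cP(\cA_0)$ and heredity give $(xa)^*(xa) \in \cP(\cA_0)$, and symmetry of $\cP(\cA_0)$ in $\cA_0$ then yields $xa \in \cP(\cA_0)$. For symmetry of $\cP(\cA_0)$ as an ideal of $\cA$: if $x \in \cA$ has $x^*x \in \cP(\cA_0) \subseteq \cA_0$, then an approximate-unit argument in $\cA_0$ forces $x \in \cA_0$, and the symmetry inside $\cA_0$ finishes.

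\textbf{The inclusion $\cP(\cA_0) \subseteq \cI_\cA(M)$.} By Corollary~\ref{cor:cone}, every positive element of $\cI_\cA(M)$ is dominated by a sum $\sum x_j^* e_j x_j$ with $e_j \in M \subseteq \cA_0$, so $\cI_\cA(M) \subseteq \cA_0$; consequently $\cI_{\cA_0}(M) \subseteq \cI_\cA(M) \cap \cA_0 = \cI_\cA(M)$. It therefore suffices to show $\cI_{\cA_0}(M)$ is a dense ideal of $\cA_0$, since then the minimality of the Pedersen ideal among dense ideals forces $\cP(\cA_0) \subseteq \cI_{\cA_0}(M)$. The closure of $\cI_{\cA_0}(M)$ inside $\cA_0$ is a closed two-sided ideal of $\cA_0$ containing $M$; the projection hypothesis enters at the crucial step that any generator $xmy$ with $x, y \in \widetilde{\cA}$ and $m \in M$ factors as $(xm)\, m\, (my)$ with $xm, my \in \cA_0$, so that the closed ideal of $\cA$ generated by $M$ already coincides with the closed ideal of $\cA_0$ generated by $M$. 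Hence that closure is all of $\cA_0$, which completes the proof. I expect the main nuisance to be precisely this bookkeeping between ``ideal in $\cA$'' versus ``ideal in $\cA_0$'', and the projection hypothesis is what makes the two viewpoints coincide in both directions.
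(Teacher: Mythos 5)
Your proposal is correct and follows essentially the same route as the paper: both directions come from the minimality of $\cP(\cA_0)$ among dense ideals of $\cA_0$ (giving $\cP(\cA_0) \subseteq \cI_\cA(M)$) and the minimality of $\cJ_\cA(M)$ among symmetric hereditary ideals containing $M$ (giving $\cJ_\cA(M) \subseteq \cP(\cA_0)$, since projections lie in the Pedersen ideal), with the equivariant case reduced to $G.M$. You merely spell out in more detail the bookkeeping the paper leaves implicit, namely that $\cP(\cA_0)$ is a hereditary symmetric ideal of $\cA$ (not just of $\cA_0$) and that the relevant ideal is dense in $\cA_0$.
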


\begin{proof} We have $M \subseteq \cP(\cA_0) \subseteq \cI_\cA(M) \subseteq \cJ_\cA(M) \subseteq \cA_0$ (the first inclusion holds because each projection in a \Cs{} belong to its Pedersen ideal, and  second inclusion holds because $\cI_\cA(M)$ is a dense ideal in  $\cA_0$). As $\cP(\cA_0)$ is a hereditary symmetric ideal, which contains $M$, cf.\ Example~\ref{ex:Pedersen}, $\cJ_\cA(M) \subseteq \cP(\cA_0)$, so $\cP(\cA_0) = \cI_\cA(M) = \cJ_\cA(M)$. The second part of the statement follows from the first part applied to $G.M$ (instead of $M$).
\end{proof}

\begin{lemma} \label{lm:J-positive} 
Let $\cA$ be a \Cs{} and let $M$ be a non-empty set of positive elements in $\cA$. Then $\cJ_\cA(M)$ is the linear span of its positive elements. If $\cA$ is equipped with an action of a group $G$, then the same holds for $\cJ_\cA^G(M)$.
\end{lemma}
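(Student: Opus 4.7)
The plan is to introduce $\cI := \cJ_\cA(M)$, let $C = \cI \cap \cA^+$, and consider $\cI_0 := \mathrm{span}(C)$. The goal is to show that $\cI_0$ is itself a symmetric hereditary ideal containing $M$. Since $\cI_0 \subseteq \cI$ and $\cI$ is by definition the smallest such ideal, minimality will force $\cI_0 = \cI$, which is exactly the assertion that $\cJ_\cA(M)$ is the linear span of its positive part.

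The hereditary-ideal part is handled by Lemma~\ref{lm:cone}. I would first verify that the cone $C$ satisfies conditions (i) and (ii) of that lemma: (ii) is immediate from the hereditary property of $\cI$, and for (i) one writes an arbitrary $x \in \widetilde{\cA}$ as $x = \lambda\cdot 1 + y$ with $y \in \cA$ and expands $x^*ax$ into four summands, each of which visibly lies in $\cI$ because $\cI$ is a two-sided ideal in $\cA$; positivity of $x^*ax$ then gives $x^*ax \in C$. Lemma~\ref{lm:cone} therefore yields that $\cI_0$ is a hereditary ideal in $\cA$ with $\cI_0 \cap \cA^+ = C$.

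The crucial step is checking symmetry of $\cI_0$, and it is here that the hypothesis ``$\cI$ is symmetric'' gets used. If $x \in \cA$ satisfies $x^*x \in \cI_0$, then $x^*x \in \cI$, so by the symmetry of $\cI$ we have $xx^* \in \cI$; since $xx^* \ge 0$, this gives $xx^* \in C \subseteq \cI_0$. Combined with $M \subseteq C \subseteq \cI_0$, we conclude that $\cI_0$ is a symmetric hereditary ideal containing $M$, so $\cI_0 = \cI$ by minimality.

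For the equivariant version the argument is identical after replacing $\cJ_\cA(M)$ by $\cJ_\cA^G(M)$ throughout. The only extra point is that $C = \cJ_\cA^G(M) \cap \cA^+$ is now $G$-invariant, since each automorphism $\alpha_t$ preserves both $\cJ_\cA^G(M)$ and positivity; hence $\cI_0$ is $G$-invariant, and the minimality of $\cJ_\cA^G(M)$ inside the class of $G$-invariant symmetric hereditary ideals containing $M$ closes the argument. I do not anticipate a genuine obstacle: the content is just that the symmetry condition on $\cI$ transfers to $\cI_0$ because the implication $x^*x \in \cI \Rightarrow xx^* \in \cI$ already lands in the positive cone, and the only thing to be careful about is not to try to build $\cJ_\cA(M)$ from below out of $M$, but rather to peel off positives from $\cJ_\cA(M)$ and check that their span is still large enough.
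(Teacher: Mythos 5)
Your proposal is correct and follows essentially the same route as the paper: set $C = \cJ_\cA(M)\cap\cA^+$, apply Lemma~\ref{lm:cone} to see that $\mathrm{span}(C)$ is a hereditary ideal, observe that symmetry (and, in the equivariant case, $G$-invariance) passes from $\cJ_\cA(M)$ to $C$ and hence to its span because $xx^*$ is positive, and conclude by minimality. The only cosmetic difference is that you verify condition (i) of Lemma~\ref{lm:cone} by hand, whereas the paper simply invokes the converse half of that lemma, which already asserts (i) and (ii) for $\cI\cap\cA^+$ when $\cI$ is a hereditary ideal.
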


\begin{proof} Set $C = \cJ_\cA^G(M) \cap \cA^+$ and let $\cJ_0$ be the linear span of $C$. Then $\cJ_0$ is a hereditary ideal in $\cA$ by Lemma~\ref{lm:cone}. Since $\cJ_\cA^G(M)$ is symmetric and $G$-invariant, the same holds for $C$, and hence for $\cJ_0$. Moreover, $M \subseteq C \subseteq \cJ_0 \subseteq \cJ_\cA^G(M)$. As $\cJ_\cA^G(M)$ is the smallest ideal with these properties, $\cJ_0 = \cJ_\cA(M)$.
The first claim is proved in a similar manner.
\end{proof}

\begin{definition} \label{def:trace}
Let $\cA$ be a \Cs. Denote by $T^+(\cA)$ the cone of traces on the positive cone of $\cA$, i.e., the set of additive homogeneous maps $\tau \colon \cA^+ \to [0,\infty]$ satisfying $\tau(x^*x) = \tau(xx^*)$, for all $x \in \cA$.

For each hereditary symmetric ideal $\cI$ in $\cA$, let $T(\cI,\cA)$ denote the cone of  linear traces on $\cI$, i.e., the set of positive linear maps $\tau \colon \cI \to \C$ satisfying $\tau(x^*x) = \tau(xx^*)$, whenever $x\in\cA$ is such that $x^*x$ (and hence $xx^*$) belong to $\cI$. We refer to $\cI$ as the \emph{domain} of $\tau$. If the domain of $\tau$ is a dense ideal of $\cA$ (in which case it will contain the Pedersen ideal of $\cA$), then $\tau$ is said to be a densely defined trace on $\cA$. 
\end{definition}

\noindent The cone of traces, here denoted by $T^+(\cA)$, is in \cite{EllRobSan:traces} denoted by $T(\cA)$.
\emph{Note that all traces by default are assumed to be positive.} The following easy fact will be used repeatedly:

\begin{lemma} \label{lm:traceinequality} Let $\tau$ be a trace on a \Cs{} $\cA$. Then $\tau(x^*ax) \le \|x\|^2 \tau(a)$, for all $a \in \cA^+$ (in the domain of $\tau$) and all $x$ in the unitization of $\cA$.
\end{lemma}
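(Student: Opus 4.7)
The plan is to exploit the trace identity $\tau(y^*y)=\tau(yy^*)$ by choosing the element $y$ cleverly, namely $y = a^{1/2}x$. This is well-defined since $a^{1/2} \in \cA$ and $x \in \widetilde{\cA}$, so $y \in \cA$; moreover $y^*y = x^*ax$ and $yy^* = a^{1/2}(xx^*)a^{1/2}$. First I would note that in the setup where $\tau$ is a linear trace defined on a hereditary symmetric ideal $\cI$, both $y^*y$ and $yy^*$ lie in $\cI$: the former because $a \in \cI$ and $\cI$ is a two-sided ideal in $\widetilde{\cA}$ (expanding $x = c + \lambda \cdot 1$ with $c \in \cA$ shows $x^*ax \in \cI$), and the latter either by symmetry of $\cI$ or, more directly, by the hereditary property applied to the inequality below. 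In the $T^+(\cA)$ setup this step is vacuous.

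Next I would apply the trace identity to write
\[
\tau(x^*ax) \; = \; \tau(y^*y) \; = \; \tau(yy^*) \; = \; \tau\bigl(a^{1/2}(xx^*)a^{1/2}\bigr).
\]
The final step is to use the bound $xx^* \le \|x\|^2 \cdot 1_{\widetilde{\cA}}$ in $\widetilde{\cA}$, which yields, after conjugating with $a^{1/2}$, the operator inequality
\[
a^{1/2}(xx^*)a^{1/2} \; \le \; \|x\|^2 \, a.
\]
Then monotonicity of $\tau$ on positive elements, which follows from additivity together with positivity (if $0 \le b \le c$, then $\tau(c) = \tau(b) + \tau(c-b) \ge \tau(b)$, using that $c - b \in \cA^+$), delivers the desired inequality $\tau(x^*ax) \le \|x\|^2 \tau(a)$.

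There is no real obstacle here; the argument is essentially a bookkeeping exercise. The only thing that requires a moment of care is confirming that the trace identity is applicable, i.e., that $y^*y$ and $yy^*$ lie in the correct domain of definition for the form of trace under consideration, and that the homogeneity of $\tau$ applies to the scalar $\|x\|^2$ in the last step.
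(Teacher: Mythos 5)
Your proof is correct, and it is the standard argument the author clearly has in mind: the paper states this lemma as an ``easy fact'' without supplying a proof, so there is nothing to compare against beyond noting that your route---writing $\tau(x^*ax)=\tau(yy^*)$ with $y=a^{1/2}x$, bounding $yy^*=a^{1/2}xx^*a^{1/2}\le\|x\|^2a$, and invoking monotonicity---is exactly the expected one. Your care about the domain issues (that $x^*ax$ and $a^{1/2}xx^*a^{1/2}$ lie in the hereditary symmetric ideal serving as the domain) is appropriate and handled correctly.
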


\noindent
The ``$\ep$-cut-down'' $(a-\ep)_+$ of $a \in \cA^+$ appearing in  part (i) of the lemma below is defined by applying the continuous positive function $t \mapsto \max\{t-\ep,0\}$ to $a$. 

A trace in $T^+(\cA)$, or a linear trace on $\cA$ defined on a given domain, is said to be \emph{lower semi-continuous} if one of the equivalent conditions in the following lemma holds.

\begin{lemma} \label{lm:lsc-eq} Let $\cA$ be a \Cs. The following conditions are equivalent for each trace $\tau$  in $T^+(\cA)$ (or  for each  linear trace on $\cA$):
\begin{enumerate}
\item $\tau(a) = \sup_{\ep>0} \tau((a-\ep)_+)$ for each $a \in \cA^+$ (in the domain of $\tau$),
\item whenever $\{a_n\}_{n=1}^\infty$ is an increasing sequence in $\cA^+$ (in the domain of $\tau$) converging in norm to $a \in \cA^+$ (in the domain of $\tau$), then $\tau(a) = \lim_{n\to\infty} \tau(a_n)$,
\item whenever $\{a_n\}_{n=1}^\infty$ is a sequence in $\cA^+$ (in the domain of $\tau$) converging in norm to $a \in \cA^+$ (in the domain of $\tau$), then $\tau(a) \le  \liminf_{n\to\infty} \tau(a_n)$.
\end{enumerate}
\end{lemma}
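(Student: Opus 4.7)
The plan is to establish the cyclic implications (ii) $\Rightarrow$ (i) $\Rightarrow$ (iii) $\Rightarrow$ (ii). Throughout I shall write the argument uniformly so it covers both the case of a trace in $T^+(\cA)$ and of a linear trace on a hereditary symmetric ideal; in the latter case one just notes that the elements appearing in the arguments automatically lie in the domain, since the cut-downs $(a-\ep)_+$, the differences $a-a_n$ (with $0 \le a_n \le a$), and the compressions $d^*a_nd$ are all dominated by elements already in the domain, which is hereditary.

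For (ii) $\Rightarrow$ (i): the net $\{(a-\ep)_+\}_{\ep>0}$ is monotone decreasing in $\ep$, so $\sup_{\ep>0} \tau((a-\ep)_+) = \lim_{n\to\infty}\tau((a-1/n)_+)$. Since $(a-1/n)_+$ is an increasing sequence in $\cA^+$ converging in norm to $a$, condition (ii) yields this supremum equals $\tau(a)$.

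For (iii) $\Rightarrow$ (ii): given an increasing sequence $a_n \nearrow a$, (iii) immediately gives $\tau(a) \le \liminf_n \tau(a_n)$. For the reverse, the positive element $a-a_n$ lies in $\cA^+$ (and in the domain), and by additivity $\tau(a) = \tau(a_n) + \tau(a-a_n) \ge \tau(a_n)$, whence $\limsup_n \tau(a_n) \le \tau(a)$. Combining yields equality.

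The main step, and the place where I expect genuine work, is (i) $\Rightarrow$ (iii). Here I plan to use the standard perturbation lemma, due to R\o rdam, that for positive $a,b$ in a \Cs{} with $\|a-b\|<\ep$ there exists a contraction $d$ such that $(a-\ep)_+ = d^*bd$. Given $a_n \to a$ in norm and $\ep>0$, for $n$ large enough this produces contractions $d_n$ with $(a-\ep)_+ = d_n^* a_n d_n$. Lemma~\ref{lm:traceinequality} then gives $\tau((a-\ep)_+) \le \|d_n\|^2 \tau(a_n) \le \tau(a_n)$, so $\tau((a-\ep)_+) \le \liminf_n \tau(a_n)$. Taking the supremum over $\ep>0$ and invoking (i) yields $\tau(a) \le \liminf_n \tau(a_n)$, as required. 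The one subtlety to watch is that $(a-\ep)_+$ and $d_n^*a_nd_n$ belong to the domain of $\tau$ whenever the elements of the sequence do; this is automatic in $T^+(\cA)$ and follows from hereditariness in the linear case.
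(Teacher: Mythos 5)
Your proof is correct and follows essentially the same route as the paper's: the same cycle (ii) $\Rightarrow$ (i) $\Rightarrow$ (iii) $\Rightarrow$ (ii), with the key step (i) $\Rightarrow$ (iii) resting on the same perturbation lemma producing contractions $d_n$ with $(a-\ep)_+ = d_n^*a_nd_n$ and the trace inequality $\tau(d_n^*a_nd_n)\le\tau(a_n)$. The only cosmetic difference is that you phrase the easy half of (iii) $\Rightarrow$ (ii) via additivity on $a-a_n$ where the paper simply invokes positivity; these are the same observation.
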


\begin{proof} (ii) $\Rightarrow$ (i). To verify (i) one needs only show that $\tau(a) = \lim_{n \to \infty} \tau((a-\ep_n)_+)$ for all sequences $\{\ep_n\}$ decreasing to $0$; but this is  just a special case of (ii). 

(iii) $\Rightarrow$ (ii). If $\{a_n\}_{n=1}^\infty$ is an increasing sequence of positive elements  converging to $a$, then $\tau(a_n) \le \tau(a)$, for all $n$, by positivity of $\tau$. If (iii) holds, then this entails that $\tau(a) = \lim_{n\to\infty} \tau(a_n)$.

(i) $\Rightarrow$ (iii). Let $\ep >0$ be given. Choose $n_0 \ge 1$ such that $\|a_n - a\| <\ep$, for all $n \ge n_0$. It then follows from \cite[Lemma 2.2]{KirRor:pi2} that $(a-\ep)_+ = d_n^*a_nd_n$, for some contractions $d_n$ in $\cA$, for all $n \ge n_0$. Hence $\tau((a-\ep)_+)  \le \tau(a_n)$, by Lemma~\ref{lm:traceinequality}. This shows that $\liminf_{n \to \infty} \tau(a_n) \ge \tau((a-\ep)_+)$. It follows that $\liminf_{n \to \infty} \tau(a_n) \ge \sup_{\ep>0} \tau((a-\ep)_+)$, which proves that (i) implies (iii).
\end{proof}

\begin{theorem}[G.K.\ Pedersen, {\cite[Corollary 3.2]{GKP:Pedersen-ideal}}] 
\label{thm:GKP}
The restriction of any densely defined trace on a \Cs{} $\cA$ to the Pedersen ideal of $\cA$ is automatically lower semi-continuous.
\end{theorem}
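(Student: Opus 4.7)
The plan is to verify condition~(i) of Lemma~\ref{lm:lsc-eq} for the restriction $\tau_0 := \tau|_{\cP(\cA)}$, i.e., to show that $\tau_0(a) = \sup_{\ep>0} \tau_0((a-\ep)_+)$ for every $a \in \cP(\cA)^+$. Since the densely defined trace $\tau$ is defined on a dense ideal that automatically contains $\cP(\cA)$, this restriction makes sense. The bound $\sup_{\ep>0}\tau_0((a-\ep)_+) \le \tau_0(a)$ is immediate: $(a-\ep)_+ \le a$, both elements belong to $\cP(\cA)$ because the Pedersen ideal is closed under continuous functional calculus with functions vanishing at $0$ (cf.\ Example~\ref{ex:Pedersen}), and positivity and linearity of $\tau_0$ do the rest.

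For the substantive direction I would invoke the elementary pointwise inequality $\min(t,\ep) \le \sqrt{\ep t}$ for $t \ge 0$, verified by splitting into the cases $t \le \ep$ and $t \ge \ep$. Since $t - (t-\ep)_+ = \min(t,\ep)$, continuous functional calculus applied to $a$ upgrades this scalar bound to the operator inequality
\[
a - (a-\ep)_+ \le \sqrt{\ep}\, a^{1/2}
\]
in $\cA$. By the same closure property of $\cP(\cA)$, the element $a^{1/2}$ lies in the Pedersen ideal, hence $\tau_0(a^{1/2})$ is a finite real number. Linearity and positivity of $\tau_0$ then yield
\[
\tau_0(a) - \tau_0((a-\ep)_+) \; = \; \tau_0\!\bigl(a - (a-\ep)_+\bigr) \; \le \; \sqrt{\ep}\, \tau_0(a^{1/2}),
\]
and letting $\ep \to 0^+$ gives $\tau_0(a) \le \sup_{\ep>0} \tau_0((a-\ep)_+)$, completing (i).

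The only genuine obstacle is spotting the scalar inequality $\min(t,\ep) \le \sqrt{\ep t}$; once it is in hand, the rest is a formal manipulation. The argument rests crucially, and only, on the stability of $\cP(\cA)$ under continuous functional calculus with $0$-vanishing functions, which is what places both $a - (a-\ep)_+$ and $a^{1/2}$ in the domain of $\tau_0$. Condition~(i) of Lemma~\ref{lm:lsc-eq} is the cleanest of the three equivalent formulations for this purpose, since it reduces lower semi-continuity to controlling the single quantity $\tau_0(a - (a-\ep)_+)$, with no density or approximation argument needed.
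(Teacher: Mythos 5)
The paper offers no internal proof of Theorem~\ref{thm:GKP}: it is quoted from Pedersen with only the citation \cite[Corollary 3.2]{GKP:Pedersen-ideal}, so there is nothing in the text to compare your argument against line by line. Your proof is correct and self-contained. The scalar inequality $\min(t,\ep)\le\sqrt{\ep t}$ holds (split into $t\le\ep$ and $t\ge\ep$), the identity $t-(t-\ep)_+=\min(t,\ep)$ is right, and continuous functional calculus upgrades these to $a-(a-\ep)_+\le\sqrt{\ep}\,a^{1/2}$; combined with finiteness of $\tau_0(a^{1/2})$ this verifies condition (i) of Lemma~\ref{lm:lsc-eq}. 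Two remarks. First, the one genuinely non-elementary ingredient is that $a^{1/2}\in\cP(\cA)$ for $a\in\cP(\cA)^+$: this does \emph{not} follow from hereditarity of the Pedersen ideal (the function $t^{1/2}$ is not dominated by any multiple of $t$ near $0$), but from the stronger property recorded in Example~\ref{ex:Pedersen}, namely that $\overline{a\cA a}\subseteq\cP(\cA)$, equivalently closure under continuous functional calculus with $0$-vanishing functions; you correctly lean on exactly this, and it is the only place you need it --- the element $a-(a-\ep)_+$ is in $\cP(\cA)$ already by linearity, since $(a-\ep)_+\le a$ puts it there by hereditarity. Second, your argument never invokes the trace identity $\tau(x^*x)=\tau(xx^*)$, only positivity, linearity and finiteness on $\cP(\cA)$; so you have in fact proved the stronger statement that every positive linear functional defined on a dense ideal restricts to a lower semi-continuous functional on the Pedersen ideal, which is closer to the form in which Pedersen originally stated the result.
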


\begin{definition} \label{def:lsc-trace}
Denote by $T_{\mathrm{lsc}}(\cA)$ the cone of  linear traces on $\cA$ whose domain is the Pedersen ideal of $\cA$. In other words, $T_{\mathrm{lsc}}(\cA) = T(\cP(\cA),\cA)$.
\end{definition}

\noindent  We can identify $T_{\mathrm{lsc}}(\cA)$ with the set of densely defined lower semi-continuous traces on $\cA$ as follows: Each trace in $T_{\mathrm{lsc}}(\cA)$ is clearly densely defined, and it is lower semi-continuous by Pedersen's theorem. Conversely, if $\tau$ is a lower semi-continuous densely defined trace, then its restriction $\tau_0$  to the Pedersen ideal of $\cA$ belongs to $T_{\mathrm{lsc}}(\cA)$, and $\tau$ is uniquely determined  on its domain by $\tau_0$ by Lemma~\ref{lm:lsc-eq} (i), because $(a-\ep)_+ \in \cP(\cA)$ for all positive $a \in \cA$ and all $\ep >0$.

A trace in $T_{\mathrm{lsc}}(\cA)$ can usually be extended to a lower semi-continuous trace on a larger domain than the Pedersen ideal; and such an extension is unique, see Proposition~\ref{prop:tau->tau'} below and the subsequent discussion.

It follows from Theorem~\ref{thm:GKP} and Lemma~\ref{lm:symmetric-proj} that each linear trace on a \Cs{} $\cA$  with domain $\cJ_\cA(M)$ or $\cJ_\cA^G(M)$ (when $\cA$ has a $G$-action) is lower semi-continuous whenever $M$ is a subset of projections in $\cA$. 

Observe that $T_{\mathrm{lsc}}(B(H)) = \{0\}$, where $B(H)$ is the bounded operators on a separable infinite dimensional Hilbert space $H$, while $T_{\mathrm{lsc}}(\cK(H))$ and the cone of lower semi-continuous traces in $T^+(B(H))$ both are equal to the one-dimensional cone spanned by the canonical trace on $B(H)$, in the latter case viewed as a function $B(H)^+ \to [0,\infty]$. The Dixmier trace is an example of a singular trace on $\cK(H)$. It belongs to $T^+(B(H))$ and to $T(\cI,\cK(H))$, where $\cI \subset \cK(H)$ is its domain, and it is zero on the finite rank operators.

Consider a general (not necessarily densely defined) linear trace $\tau$ on $\cA$ with domain $\cI$.  The closure, $\overline{\cI}$, of $\cI$ is a closed two-sided ideal in $\cA$, and hence, in particular, a \Cs; and $\tau$ is of course densely defined relatively to this \Cs. We have the following inclusions:
$$\cP(\overline{\cI}) \subseteq \cI \subseteq \overline{\cI}.$$
The restriction of $\tau$ to $\cP(\overline{\cI})$ is lower semi-continuous by Theorem~\ref{thm:GKP}. If this restriction is zero, then $\tau$ is said to be \emph{singular}. Each trace $\tau$ on $\cA$ with domain $\cI$ can in a unique way be written as the sum $\tau = \tau_1+\tau_2$  of a lower semi-continuous trace $\tau_1$ and a singular trace $\tau_2$, both with domain $\cI$. The lower semi-continuous part is obtained by restricting $\tau$ to the Pedersen ideal (which is lower semi-continuous) and then extending to a lower semi-continuous trace $\tau_1$ defined on $\cI$ as described in \eqref{eq:tau} and the subsequent comments.

One can smoothly and uniquely pass from a trace in $T^+(\cA)$  to a linear trace defined on its natural (maximal) domain:

\begin{proposition} \label{prop:tau->tau'}
Let $\cA$ be a \Cs, and let $\tau' \in T^+(\cA)$.  Let $C$ be the set of positive elements $a \in \cA$ with $\tau'(a) < \infty$, and  let $\cI$ be the linear span of $C$. Then $\cI$ is a hereditary symmetric ideal in $\cA$,  $\cI \cap \cA^+ = C$, and there is a unique linear trace $\tau$ with domain $\cI$ that agrees with $\tau'$ on $C$.

We can recover $\tau'$ from $\tau$ via the formula
\begin{equation} \label{eq:tau'}
\tau'(a) = \begin{cases} \tau(a), & a \in C, \\ \infty, & a \in \cA^+ \setminus C.\end{cases}
\end{equation}
If $\tau'$ is lower semi-continuous, then so is $\tau$.
\end{proposition}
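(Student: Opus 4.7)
The plan is to build $\cI$ as $\mathrm{span}(C)$ using Lemma~\ref{lm:cone}, then extend $\tau'$ off $C$ by linearity. First I would verify that $C$ satisfies hypotheses (i) and (ii) of Lemma~\ref{lm:cone} and is symmetric in the sense of that lemma. For (i), Lemma~\ref{lm:traceinequality} applied to $\tau'$ gives $\tau'(x^*ax)\le\|x\|^2\tau'(a)<\infty$ for $a\in C$ and $x\in\widetilde{\cA}$, so $x^*ax\in C$. For (ii), if $0\le a\le b$ with $b\in C$ and $a\in\cA$, then additivity of $\tau'$ on $\cA^+$ gives $\tau'(a)+\tau'(b-a)=\tau'(b)<\infty$, so $\tau'(a)<\infty$ and $a\in C$. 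Symmetry is immediate from the trace identity $\tau'(xx^*)=\tau'(x^*x)$. Lemma~\ref{lm:cone} then yields that $\cI=\mathrm{span}(C)$ is a hereditary symmetric ideal in $\cA$ with $\cI\cap\cA^+=C$.

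Next I would construct $\tau$. Any $a\in\cI$ is a finite complex-linear combination of elements of $C$; after splitting the scalars into real and imaginary, positive and negative parts, and using that $C$ is a cone, one may write $a=c_1-c_2+i(c_3-c_4)$ with $c_j\in C$. Define $\tau(a)=\tau'(c_1)-\tau'(c_2)+i\bigl(\tau'(c_3)-\tau'(c_4)\bigr)$. The key step is well-definedness: if $c_1-c_2+i(c_3-c_4)=d_1-d_2+i(d_3-d_4)$ with $c_j,d_j\in C$, then separating self-adjoint and anti-self-adjoint parts forces $c_1+d_2=d_1+c_2$ (and analogously for the imaginary part), and additivity of $\tau'$ on $\cA^+$ (with all summands finite, so the identity is unambiguous) gives $\tau'(c_1)-\tau'(c_2)=\tau'(d_1)-\tau'(d_2)$. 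Linearity is then automatic, positivity is the identity $\tau(a)=\tau'(a)\ge 0$ for $a\in C$, and the trace identity $\tau(x^*x)=\tau(xx^*)$ for $x^*x\in\cI$ is inherited from $\tau'$ via symmetry of $C$. Uniqueness of $\tau$ is immediate, since any linear functional on $\cI$ is determined by its values on $C=\cI\cap\cA^+$.

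The recovery formula \eqref{eq:tau'} is tautological: $C$ is by definition the set where $\tau'$ is finite, and on $C$ one has $\tau=\tau'$, while $\tau'=\infty$ off $C$ on $\cA^+$. Finally, to show that lower semi-continuity of $\tau'$ passes to $\tau$, I would use criterion (i) of Lemma~\ref{lm:lsc-eq}. For $a\in C$ one has $(a-\ep)_+\le a$, so $(a-\ep)_+\in C$ by heredity of $C$, hence $\tau((a-\ep)_+)=\tau'((a-\ep)_+)$, and the chain
\[
\tau(a)=\tau'(a)=\sup_{\ep>0}\tau'((a-\ep)_+)=\sup_{\ep>0}\tau((a-\ep)_+)
\]
establishes condition (i) for $\tau$. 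The only step I expect to require any real care is the well-definedness of $\tau$ on $\cI$; everything else is a direct consequence of the axioms satisfied by $\tau'\in T^+(\cA)$ together with what Lemma~\ref{lm:cone} already provides about $\cI$.
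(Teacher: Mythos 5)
Your proof is correct and follows essentially the same route as the paper: verify that $C$ satisfies the hypotheses of Lemma~\ref{lm:cone} (using Lemma~\ref{lm:traceinequality} for condition (i)), extend $\tau'|_C$ by linearity, and deduce positivity, the trace identity, and lower semi-continuity from the corresponding properties of $\tau'$. The only difference is that you spell out the well-definedness of the linear extension, which the paper leaves as an appeal to additivity and homogeneity; your added detail is accurate.
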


\begin{proof} Observe that the set $C$ is a symmetric subcone of $\cA^+$ satisfying conditions (i) and (ii) of Lemma~\ref{lm:cone} (use Lemma~\ref{lm:traceinequality} to see that Lemma~\ref{lm:cone}~(i) holds). It therefore follows from Lemma~\ref{lm:cone} that $\cI$ is a symmetric hereditary ideal in $\cA$ and that $\cI^+ = \cI \cap \cA^+ = C$. By additivity and homogeneity of $\tau'$, its restriction to $C$ extends (uniquely) to a linear map $\tau \colon \cI \to \C$. If $a \in \cI$ is positive, then $a \in C$, so $\tau(a) = \tau'(a) \ge 0$, which shows that $\tau$ is positive. Let $x \in \cA$ be such that $x^*x \in \cI$. Then  $x^*x$ and $xx^*$ are positive elements in $\cI$, so both belong to $C$, whence $\tau(x^*x) = \tau'(x^*x) = \tau'(xx^*) = \tau(xx^*)$, so $\tau$ is a trace on $\cI$. 

It is clear that \eqref{eq:tau'} holds. If $\tau'$ is lower semi-continuous, then so is its restriction  to $C$, which shows that $\tau$ is lower semi-continuous.
\end{proof}

\noindent \emph{Whenever we talk about a trace on a \Cs{} $\cA$, we shall mean a trace defined on the cone of positive elements of that \Cs{} taking values in $[0,\infty]$, i.e., a trace in $T^+(\cA)$, and, at the same time, a linear trace on the domain defined in the proposition above, or some other domain to be specified in the context.}

As a converse to the proposition above, consider a linear trace $\tau$ defined on a hereditary symmetric ideal $\cI$ in $\cA$. Then $\tau'$ given by \eqref{eq:tau'} above, with $C= \cI \cap \cA^+$, belongs to $T^+(\cA)$, and it agrees with $\tau$ on $C$. If we apply Proposition~\ref{prop:tau->tau'} to $\tau'$, then we get back a new linear trace $\tau_0$ defined on some symmetric hereditary ideal $\cJ_0$ of $\cA$, which contains the sub-ideal $\cI_0$ of $\cI$ defined in Lemma~\ref{lm:cone} (but perhaps not $\cI$ itself); and $\tau$ and $\tau_0$ agree on $\cI_0$. 

However, this extension of $\tau$ to a trace $\tau'$ defined on the positive cone of $\cA$ is not unique, and $\tau'$ need not be lower semi-continuous, even when $\tau$ is lower semi-continuous.
If $\tau$ is lower semi-continuous, then the map $\tau' \colon \cA^+ \to [0,\infty]$ defined by
\begin{equation} \label{eq:tau}
\tau'(a) = \sup\{\tau(a_0) : 0 \le a_0 \le a, a_0 \in \cI\}, \qquad a \in \cA^+,
\end{equation}
 is a lower semi-continuous trace in $T^+(\cA)$, and it is the unique such that extends $\tau$. In the sequel, when considering a lower semi-continuous trace, we may at wish view it either as a linear trace defined on its domain, or as a trace defined on the positive cone, via \eqref{eq:tau}.

There is a canonical way of extending a lower semi-continuous trace $\tau$ defined on some hereditary symmetric ideal $\cI$ of $\cA$ to its maximal domain: first extend $\tau$ to a lower semi-continuous trace $\tau' \colon \cA^+ \to [0,\infty]$ as in \eqref{eq:tau} above; and then take the linearization $\overline{\tau}$ of $\tau'$ defined in Proposition~\ref{prop:tau->tau'}.

We quote the following well-known result for later reference, see, eg., \cite[Lemma 5.3]{RorSie:action} for a proof.

\begin{lemma} \label{prop:extending}
Let $\cA$ be a \Cs{} equipped with an action of a group $G$, and let $\tau$ be a $G$-invariant lower semi-continuous  trace on $\cA$. It follows that $\tau \circ E$ is a lower semi-continuous  trace on the (reduced) crossed product $\cA \rtimes G$ that extends $\tau$, where $E \colon \cA \rtimes G \to \cA$ is the standard conditional expectation. If $\tau$ is densely defined, then so is $\tau \circ E$. 
\end{lemma}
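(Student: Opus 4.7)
The plan is to verify, in order, the four required properties of $\tau \circ E$ viewed as a map $(\cA \rtimes G)^+ \to [0,\infty]$: that it is positive and additive--homogeneous, that it extends $\tau$, that it is lower semi-continuous, and that it satisfies the tracial identity $\tau(E(x^*x)) = \tau(E(xx^*))$; and then to deduce the densely-defined claim.

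The first two properties are immediate: $\tau \circ E$ inherits positivity, additivity, and homogeneity from the positive linearity of $E$ and the tracial properties of $\tau$, and it extends $\tau$ because $E$ is the identity on $\cA$. Lower semi-continuity follows from Lemma~\ref{lm:lsc-eq}(iii) applied via the norm-continuity of $E$ and the lower semi-continuity of $\tau$ on $\cA^+$.

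The substantive step is the tracial identity. On the norm-dense $*$-subalgebra $C_c(G,\cA) \subset \cA \rtimes G$ it is a direct computation: for $x = \sum_{t \in F} a_t u_t$ with $F \subset G$ finite, the covariance $u_t a = \alpha_t(a) u_t$ yields
$$E(x^*x) = \sum_{t \in F} \alpha_{t^{-1}}(a_t^*a_t), \qquad E(xx^*) = \sum_{t \in F} a_t a_t^*,$$
and applying $\tau$, the $G$-invariance of $\tau$ combined with the trace identity on $\cA$ gives $\tau(E(x^*x)) = \sum_t \tau(a_t^*a_t) = \sum_t \tau(a_t a_t^*) = \tau(E(xx^*))$. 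Extending this to arbitrary $x \in \cA \rtimes G$ is more delicate, since only lower semi-continuity is available. I would use the ``symmetrized cut-down'' $y_\ep := x f_\ep(x^*x)^{1/2}$, where $f_\ep(t) := \max\{0, 1-\ep/t\}$: since $t f_\ep(t) = (t-\ep)_+$ and $x g(x^*x) = g(xx^*)x$ for any continuous $g$ with $g(0) = 0$, one obtains
$$y_\ep^* y_\ep = (x^*x - \ep)_+, \qquad y_\ep y_\ep^* = (xx^* - \ep)_+.$$
Approximating $y_\ep \in \cA \rtimes G$ in norm by $y_{\ep,n} \in C_c(G,\cA)$ and using Lemma~2.2 of \cite{KirRor:pi2} to extract, for each $\delta > 0$ and $n$ large, a contraction $c_n$ with $((x^*x - \ep)_+ - \delta)_+ = (y_{\ep,n} c_n)^*(y_{\ep,n} c_n)$, together with the tracial identity already established on $C_c(G,\cA)$ (applied after a further approximation of $c_n$ by elements of $\widetilde{C_c(G,\cA)}$), allows one to squeeze the two sides to the equality $\tau(E((x^*x - \ep)_+)) = \tau(E((xx^* - \ep)_+))$. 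Letting $\ep \downarrow 0$ and applying Lemma~\ref{lm:lsc-eq}(i) to the lower semi-continuous function $\tau \circ E$ then delivers the full identity.

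For the densely-defined claim: the domain of $\tau \circ E$ contains every element of the form $x^* a x$ with $x = \sum_t a_t u_t \in C_c(G,\cA)$ and $a \in \cP(\cA)^+$, because
$$\tau(E(x^* a x)) \, = \, \sum_t \tau(a_t^* a \, a_t) \, \le \, \Big(\sum_t \|a_t\|^2\Big) \tau(a) \, < \, \infty,$$
and the linear span of such elements is norm-dense in $\cA \rtimes G$. The chief obstacle throughout is the extension step above, where the simultaneous norm-control of $y_{\ep,n}^* y_{\ep,n}$ and $y_{\ep,n} y_{\ep,n}^*$ provided by the \emph{symmetric} approximant $y_\ep$ (rather than an arbitrary approximation of $x$ itself) is what makes lower semi-continuity alone enough to propagate the tracial identity from $C_c(G,\cA)$ to all of the reduced crossed product.
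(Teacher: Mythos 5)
Your overall architecture is sensible, and several pieces are correct: the positivity/additivity/extension claims, the lower semi-continuity of $\tau\circ E$ via Lemma~\ref{lm:lsc-eq}(iii), the computation of $E(x^*x)$ and $E(xx^*)$ on $C_c(G,\cA)$, and the argument for the densely-defined statement, which is essentially the one the paper itself sketches (note that the paper does not prove this lemma at all --- it quotes it from \cite[Lemma 5.3]{RorSie:action} and only indicates the densely-defined part). The symmetric cut-down $y_\ep = xf_\ep(x^*x)^{1/2}$ with $y_\ep^*y_\ep=(x^*x-\ep)_+$ and $y_\ep y_\ep^*=(xx^*-\ep)_+$ is also a genuinely good device.

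The gap is in the final ``squeeze''. After all the approximations, what your argument actually delivers is the one-sided estimate $\tau(E(((x^*x-\ep)_+-\delta)_+))\le\tau(E(y_{\ep,n}y_{\ep,n}^*))$ for large $n$. To extract anything from this you must bound $\tau(E(y_{\ep,n}y_{\ep,n}^*))$ \emph{from above} by $\tau(E((xx^*-\ep)_+))$ or $\tau(E(xx^*))$, i.e.\ you need upper semi-continuity of $\tau\circ E$ along $y_{\ep,n}y_{\ep,n}^*\to(xx^*-\ep)_+$; lower semi-continuity gives precisely the reverse inequality, and at this stage nothing excludes $\tau(E(y_{\ep,n}y_{\ep,n}^*))=\infty$ for every $n$ while $\tau(E((xx^*-\ep)_+))<\infty$, which would make your inequality vacuous. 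The obvious repair --- cut $y_{\ep,n}y_{\ep,n}^*$ down below $(xx^*-\ep)_+$ with \cite[Lemma 2.2]{KirRor:pi2} and invoke Lemma~\ref{lm:traceinequality} --- is circular, since that lemma is available only for maps already known to be traces, which is what you are trying to prove about $\tau\circ E$. A new ingredient is required; for example, one can first establish $\tau(E(z^*az))\le\|z\|^2\,\tau(a)$ for all $a\in\cA^+$ in the domain of $\tau$ and all $z\in\cA\rtimes G$, by computing $\tau(E(z^*az))=\tau(a^{1/2}E(zz^*)a^{1/2})\le\|z\|^2\tau(a)$ for $z\in C_c(G,\cA)$ and then letting $z_n\to z$ --- here lower semi-continuity \emph{does} suffice because the upper bound is independent of $n$ --- and then build the tracial identity on top of that inequality; alternatively one passes through the dual weight on the von Neumann algebra crossed product. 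As written, the extension step does not close.
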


\noindent One should here view $\tau$ and $\tau \circ E$ as traces defined on the positive cone of $\cA$, respectively, $\cA \rtimes G$. For the claim that $\tau \circ E$ is densely defined when $\tau$ is, use that $\tau \circ E$ is finite on the positive cone of $\cP(\cA)$, and the  hereditary ideal in $\cA \rtimes G$ generated by $\cP(\cA)$ is dense in $\cA \rtimes G$. It is a curious fact that an invariant densely defined trace $\tau$ on $\cA$ need not in general extend to a trace on the crossed product $\cA \rtimes G$; in particular, $\tau \circ E$ need not be a trace if $\tau$ is not lower semi-continuous. See Example~\ref{ex:c_0}.

We end this section by considering when a \Cs{} admits a non-zero densely defined trace. 
Blackadar and Cuntz proved in \cite{BlaCuntz:infproj} 
that a stable \emph{simple} \Cs{} either contains a properly infinite projection or admits a non-zero dimension function (defined on its Pedersen ideal). In the latter case, assuming moreover that the \Cs{} is exact, it admits a non-zero densely defined trace. (This step follows from the work of Blackadar-Handelman \cite{BlaHan:quasitrace}, Haagerup, \cite{Haa:quasi}, and Kirchberg, \cite{Kir:quasitraces}, as explained in the last part of the proof of the theorem below.) Also, it is well-known that a unital exact \Cs{} admits a tracial state if and only if no matrix algebra over it is properly infinite. A common feature of simple and of unital \Cs s is that their primitive ideal spaces are compact. Recall that the primitive ideal space, $\mathrm{Prim}(\cA)$, of a \Cs{} $\cA$ is compact if and only if for all upward directed families $\{\cI_\alpha\}$ of closed two-sided ideals in $\cA$ whose union is dense in $\cA$ there is $\alpha$ such that $\cA = \cI_\alpha$. 

\begin{theorem} \label{thm:existtrace}
Let $\cA$ be an exact \Cs{} whose primitive ideal space is compact. Then $\cA$ admits a non-zero densely defined lower semi-continuous trace, i.e., $T_{\mathrm{lsc}}(\cA) \ne \{0\}$, if and only if the stabilization of $\cA$ does not contain a full properly infinite projection.
\end{theorem}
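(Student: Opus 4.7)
For the forward direction $(\Rightarrow)$, suppose $\tau \in T_{\mathrm{lsc}}(\cA)$ is non-zero. Extending $\tau$ by the standard unbounded trace on $\cK$ yields a non-zero $\tilde\tau \in T_{\mathrm{lsc}}(\cA \otimes \cK)$. Assume for contradiction that $p \in \cA \otimes \cK$ is a full properly infinite projection. Every projection lies in the Pedersen ideal (Example~\ref{ex:Pedersen}), so $\tilde\tau(p) < \infty$; proper infiniteness gives $p \oplus p \sim p$, forcing $\tilde\tau(p) = 2\tilde\tau(p) = 0$. By the trace inequality of Lemma~\ref{lm:traceinequality}, $\tilde\tau$ then vanishes on every $x^*px$, hence on the hereditary ideal $\cI_{\cA \otimes \cK}(\{p\})$ generated by $p$; but by fullness of $p$ and Lemma~\ref{lm:symmetric-proj} this hereditary ideal equals the Pedersen ideal $\cP(\cA \otimes \cK)$. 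So $\tilde\tau$ vanishes on the whole Pedersen ideal, contradicting $\tilde\tau \ne 0$.

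For the converse $(\Leftarrow)$, suppose $\cA \otimes \cK$ contains no full properly infinite projection. The strategy is to reduce to the simple case and invoke the Blackadar--Cuntz dichotomy. By compactness of $\mathrm{Prim}(\cA)$, the closure of any chain of proper closed two-sided ideals of $\cA$ is again proper, so Zorn's lemma produces a maximal proper closed two-sided ideal $J \subset \cA$; the quotient $\cA/J$ is then simple, and exact since exactness passes to quotients. Granting for the moment that $(\cA/J) \otimes \cK$ contains no properly infinite projection, the Blackadar--Cuntz theorem \cite{BlaCuntz:infproj} applied to the stable simple algebra $(\cA/J) \otimes \cK$ supplies a non-zero lower semi-continuous dimension function on its Pedersen ideal. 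Blackadar--Handelman \cite{BlaHan:quasitrace} interprets this as a non-zero densely defined lsc $2$-quasitrace, and Haagerup's theorem \cite{Haa:quasi} together with Kirchberg's extension \cite{Kir:quasitraces} to the $\sigma$-unital exact setting upgrades it to a non-zero trace $\bar\tau \in T_{\mathrm{lsc}}(\cA/J)$. Composing with the quotient $\pi : \cA \to \cA/J$ gives $\tau := \bar\tau \circ \pi$, which is a non-zero positive lower semi-continuous trace on $\cA$; its domain $\pi^{-1}(\cP(\cA/J))$ is dense in $\cA$ by surjectivity of $\pi$, so $\tau \in T_{\mathrm{lsc}}(\cA) \setminus \{0\}$.

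The technical heart of the argument, and the main obstacle, is the unproved assertion that $(\cA/J) \otimes \cK$ contains no properly infinite projection. Since $\cA/J$ is simple, every non-zero projection in its stabilization is automatically full, so one must rule out proper infiniteness altogether. My approach is to argue by contradiction: a properly infinite projection $\bar p$ in $(\cA/J) \otimes \cK$ can be lifted, via the Kirchberg--R\o rdam lifting theory for properly infinite positive elements, to a properly infinite positive element $a$ of $\cA \otimes \cK$ with $\pi(a) = \bar p$. Since the ideals of $\cA \otimes \cK$ are precisely the ideals $I \otimes \cK$ for $I$ a closed two-sided ideal of $\cA$, maximality of $J$ makes $J \otimes \cK$ a maximal ideal of $\cA \otimes \cK$. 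One then exploits this maximality together with compactness of $\mathrm{Prim}(\cA)$ to replace $a$ by a full properly infinite projection in $\cA \otimes \cK$, contradicting the standing hypothesis. This delicate combination of Cuntz subequivalence, the lifting of properly infinite positive elements, and the topology of $\mathrm{Prim}(\cA)$ is what I expect to be the hardest part of the proof.
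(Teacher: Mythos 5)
Your forward direction is correct and is essentially the standard argument (the paper leaves it implicit): a full properly infinite projection $p$ lies in $\cP(\cA\otimes\cK)$, so $\tilde\tau(p)<\infty$ and $\tilde\tau(p)=2\tilde\tau(p)=0$, and fullness together with Lemma~\ref{lm:symmetric-proj} kills $\tilde\tau$ on all of $\cP(\cA\otimes\cK)$.

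The converse direction, however, has a fatal gap precisely at the step you flag as the ``technical heart'': the claim that $(\cA/J)\otimes\cK$ contains no properly infinite projection is false in general, and the reduction to a simple quotient cannot work. Take $\cA=\C\oplus\cO_2$: it is unital, exact, with compact (two-point) primitive ideal space, and its stabilization $\cK\oplus(\cO_2\otimes\cK)$ contains no \emph{full} properly infinite projection, since the first coordinate of a full projection is a non-zero finite-rank projection and hence never properly infinite. Yet the maximal ideal $J=\C\oplus 0$ gives the simple quotient $\cA/J\cong\cO_2$, whose stabilization is awash in properly infinite projections and which admits no non-zero trace at all --- so neither your intermediate claim nor the final composition $\bar\tau\circ\pi$ can be salvaged for this choice of $J$ (the theorem itself holds for this $\cA$ via the trace supported on the $\C$ summand, which your construction cannot see). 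The underlying problem is that a lift $a$ of a full element of $(\cA/J)\otimes\cK$ need not be full in $\cA\otimes\cK$ --- it only generates an ideal $I$ with $I+J\otimes\cK=\cA\otimes\cK$ --- and there is no way in general to select a maximal ideal whose quotient is stably finite. The paper avoids quotients entirely: compactness of $\mathrm{Prim}(\cA)$ yields a single element $e$ with $\cI_\cA((e-\ep)_+)=\cP(\cA)$, so that $u_\ep=\langle(e-\ep)_+\rangle$ is an order unit for the subsemigroup $\mathrm{Cu}_0(\cA)$ of Cuntz classes from the Pedersen ideal; the dichotomy is then on whether some multiple $nu_{\ep_0}$ is properly infinite (in which case scaling elements \`a la Blackadar--Cuntz produce a full properly infinite projection), and if not, the state $nu_{\ep_1}\mapsto n$ extends by \cite{BlaRor:extending} to a state on $\mathrm{Cu}_0(\cA)$, giving a non-zero lower semi-continuous dimension function on $\cA$ itself, to which the Blackadar--Handelman and Haagerup--Kirchberg machinery (which you correctly identify, but apply to the wrong algebra) is then applied.
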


\begin{proof} The proof is most naturally phrased via dimension functions (as defined by Cuntz in \cite{Cuntz:dimension}) and the Cuntz semigroup, see, eg., \cite{CowEllIvan:Cu}. 

Observe first that the cone of densely defined lower semi-continuous traces and the primitive ideal space are not changed by stabilizing the \Cs, so may assume that $\cA$ is stable. 

The class of (closed two-sided) ideals of the form $\overline{\cI}_\cA((e-\ep)_+)$, where $e \in \cA^+$ and $\ep>0$, is upwards directed and its union is dense in $\cA$. Hence $\cA = \overline{\cI}_\cA((e-\ep_0)_+)$, for some $e \in \cA^+$ and some $\ep_0>0$ by compactness of the primitive ideal space of $\cA$. Since $(e-\ep)_+$ belongs to the Pedersen ideal and since $\cI_\cA((e-\ep)_+)$ is a dense ideal in $\cA$, for all  $0 < \ep \le \ep_0$, it follows that $\cI_\cA((e-\ep)_+) = \cP(\cA)$, for this $e \in \cA^+$ and for all $0 < \ep \le \ep_0$. Let $u_\ep = \langle (e-\ep)_+ \rangle$ be the corresponding element the Cuntz semigroup $\mathrm{Cu}(\cA)$ of $\cA$. 

Fix $0 < \ep \le \ep_0$. It follows from Corollary~\ref{cor:cone}, and the fact that $\big\langle \sum_{j=1}^n x_j^*e_j x_j \big\rangle \le \sum_{j=1}^n \langle e_j \rangle$  in $\mathrm{Cu}(\cA)$, for all positive $e_j$ and all $x_j$ in $\widetilde{\cA}$, that for each positive $a$ in $\cP(\cA)$ there exists $k \ge 1$ such that $\langle a \rangle \le k u_\ep$. In other words, $u_\ep$ is an order unit for the sub-semigroup $\mathrm{Cu}_0(\cA)$, consisting of all classes $\langle a \rangle$, where $a$ is a positive element in $\cP(\cA)$. In particular, $u_{\ep_0} \le u_\ep \le k u_{\ep_0}$, for some integer $k \ge 1$ (that depends on $\ep$). 

Consider first the case that $nu_{\ep_0}$ is properly infinite, for some integer $n \ge 1$. Upon replacing  $e$ by an $n$-fold direct sum of $e$ with itself (which is possible since $\cA$ is assumed to be stable), we may assume that $u_{\ep_0}$ itself is properly infinite, i.e., that $ku_{\ep_0} \le u_{\ep_0}$, for all integers $k \ge 1$. By the discussion in the previous paragraph, we can then conclude that $u_{\ep}$ is properly infinite and that $x \le u_{\ep}$, for all $x \in \mathrm{Cu}_0(\cA)$ and for all $0 < \ep \le \ep_0$. 

We can now follow the argument of \cite[Proposition 2.7]{PasRor:RR0}, which uses the notion of \emph{scaling elements} introduced by Blackadar and Cuntz, \cite{BlaCuntz:infproj},  to construct a full properly infinite projection $p \in \cA$: Fix $0 \le \ep < \ep_0$. Then $(e-\ep)_+$ is properly infinite, so by \cite[Proposition 3.3]{KirRor:pi} there exist positive elements $b_1,b_2$ in the hereditary sub-\Cs{} of $\cA$ generated by $(e-\ep_0)+$ such that $b_1 \perp b_1$ and $(e-\ep_0)_+ \precsim b_j$, for $j=1,2$. In particular, $b_1,b_2 \in \cP(\cA)$. As explained in \cite[Remark 2.5]{PasRor:RR0} there exists $x \in \cA$ such that $x^*x(e-\ep_0)_+ = (e-\ep_0)_+$ and $xx^*$ belongs to the hereditary sub-\Cs{} of $\cA$ generated by $b_1$. This shows that $x$ is a scaling element (cf.\ \cite[Remark 2.4]{PasRor:RR0}) satisfying $x^*xb_2 = b_2$ and $xx^*b_2 = 0$. By \cite{BlaCuntz:infproj}, see also \cite[Remark 2.4]{PasRor:RR0}, we get a projection $p \in \cA$ satisfying $b_2p=b_2$. As $u_{\ep_0} \le \langle b_2 \rangle \le \langle p \rangle \le u_{\ep_0}$, we conclude that $\langle p \rangle$ is a properly infinite order unit of $\mathrm{Cu}_0(\cA)$, whence $p$ is a full properly infinite projection in $\cA$. 

Suppose now that there is no integer $n \ge 1$ such that $nu_{\ep_0}$ is  properly infinite. We proceed to show that $T_{\mathrm{lsc}}(\cA) \ne \{0\}$ in this case. As shown above, $nu_\ep$ is not properly infinite, for any $n \ge 1$ and for any $0 < \ep \le \ep_0$. Fix $0 < \ep_1 < \ep_0$, and observe that $n u_{\ep_1} \le m u_{\ep_1}$ implies $n \le m$, for all integers $n,m \ge 0$ (since no multiple of $u_{\ep_1}$ is properly infinite). The map $f_0 \colon \N_0 u_{\ep_1} \to \R^+$, given by $f_0(nu_{\ep_1}) = n$, for $n \ge 0$, is therefore a positive additive map on the sub-semigroup $\N_0 u_{\ep_1}$ of $\mathrm{Cu}_0(\cA)$ (where $\N_0 u_{\ep_1}$ is equipped with the relative order arising from $\mathrm{Cu}_0(\cA)$). By \cite[Corollary 2.7]{BlaRor:extending} we can extend $f_0$ to a positive additive map (state) $f \colon\mathrm{Cu}_0(\cA) \to \R^+$ (since $u_{\ep_1}$ is an order unit for $\mathrm{Cu}_0(\cA)$). Let $d \colon \cP(\cA)^+ \to \R^+$ be the associated \emph{dimension function} given by $d(a) = f(\langle a \rangle)$, and let $\bar{d} \colon\mathrm{Cu}_0(\cA) \to \R^+$ be the corresponding \emph{lower semi-continuous} dimension function given by $\bar{d}(a) = \lim_{\ep > 0} d((a-\ep)_+)$, for $a \in \cP(\cA)^+$, cf. \cite[Proposition 4.1]{Ror:UHFII}. Then 
$$d((e-\ep_0)_+) \le \bar{d}((e-\ep_1)_+) \le d((e-\ep_1)_+),$$ and $d(e-\ep_0)_+)>0$ because $d$ is non-zero and $u_{\ep_0} = \langle (e-\ep_0)_+ \rangle$ is an order unit for $\mathrm{Cu}_0(\cA)$. This shows that $\bar{d}$ is non-zero.

It follows from Blackadar--Handelman, \cite[Theorem II,2,2]{BlaHan:quasitrace}, that the lower semi-con\-tin\-uous dimension function $\bar{d}$ (called rank function in \cite{BlaHan:quasitrace})  lifts to a lower semi-continuous $2$-quasitrace $\tau$ defined on the ``{pre-\Cs{}}" $\cP(\cA)$, i.e., $\bar{d} = d_\tau$, where $d_\tau(a) = \lim_{n\to\infty} \tau(a^{1/n})$, for all positive elements $a \in \cP(\cA)$. Finally, by Kirchberg's extension, \cite{Kir:quasitraces}, to the non-unital case of Haagerup's theorem, \cite{Haa:quasi}, that any $2$-quasitrace on an exact \Cs{} is a trace, $\tau$ is a non-zero lower semi-continuous densely defined trace on $\cA$.
\end{proof}

\noindent  It remains unresolved when a \Cs{} with non-compact primitive ideal space admits a non-zero densely defined lower semi-continuous trace. Clearly, $T_{\mathrm{lsc}}(\cA)$ is non-zero for all commutative \Cs s $\cA$, while the primitive ideal space of a commutative \Cs{} is compact only when it is unital. On the other hand, absence of full properly infinite projections is not sufficient to guarantee existence of non-zero lower semi-continuous traces. Take, for example, the suspension (or the cone over) any purely infinite \Cs, cf.\ \cite[Proposition 5.1]{KirRor:pi}. In \cite[Section 4]{MatuiRor:universal} it was shown that any infinite group $G$ admits a (free) action on the locally compact non-compact Cantor set ${\mathbf{K}}^*$ with no non-zero invariant Radon measures. Accordingly, $C_0({\mathbf{K}}^*) \rtimes G$ has no non-zero densely defined lower semi-continuous trace, although $C_0({\mathbf{K}}^*) \rtimes G$ admits an approximate unit consisting of projections, and, if $G$ is supramenable, eg., if $G = \Z$, then no projection in the (stabilization of) $C_0({\mathbf{K}}^*) \rtimes G$ is properly infinite. 

The latter example is covered by the proposition below. When $p$ and $q$ are projections in a \Cs{} $\cA$ and $n \ge 1$ is an integer, then denote by $p \otimes 1_n$ the $n$-fold direct sum of $p$ with itself, and write $p \prec \hspace{-.17cm} \prec q$ if $p \otimes 1_n \precsim q$, for all $n \ge 1$.

\begin{proposition} \label{prop:notrace} 
Let $\cA$ be a \Cs{} admitting an approximate unit consisting of projections. Suppose that for each projection $p \in \cA$ there exists a projection $q$ in $\cA$ with $p \prec \hspace{-.17cm} \prec q$. Then $T_{\mathrm{lsc}}(\cA) = \{0\}$. 
\end{proposition}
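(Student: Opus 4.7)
The plan is to argue by contradiction: assume there exists a non-zero $\tau \in T_{\mathrm{lsc}}(\cA)$, manufacture a projection $p \in \cA$ with $0 < \tau(p) < \infty$, and then exploit the hypothesis to produce a projection $q \in \cA$ on which $\tau$ must be both infinite (by $\prec\hspace{-.17cm}\prec$) and finite (because projections lie in $\cP(\cA)$, cf.\ Example~\ref{ex:Pedersen}, which is the domain of $\tau$).

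First I would locate the projection $p$. Since $\tau$ is nonzero on $\cP(\cA)^+$, pick $a \in \cP(\cA)^+$ with $\tau(a) > 0$; by Lemma~\ref{lm:lsc-eq}(i), there is $\ep > 0$ with $\tau((a-\ep)_+) > 0$. Let $\{p_\lambda\}$ be the approximate unit of projections in $\cA$, so $p_\lambda a p_\lambda \to a$ in norm. For some index $\lambda$ one has $\|p_\lambda a p_\lambda - a\| < \ep$, and then \cite[Lemma 2.2]{KirRor:pi2} supplies a contraction $d \in \cA$ with $(a-\ep)_+ = d^* p_\lambda a p_\lambda d$, so in particular $(a-\ep)_+ \precsim p_\lambda$. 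Setting $p := p_\lambda$, the standard Cuntz-monotonicity of lower semi-continuous traces (proved exactly as in (i)$\Rightarrow$(iii) of Lemma~\ref{lm:lsc-eq}, combined with Lemma~\ref{lm:traceinequality}) gives $\tau(p) \ge \tau((a-\ep)_+) > 0$; and since $p$ is a projection, $p \in \cP(\cA)$, forcing $\tau(p) < \infty$.

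Next I apply the hypothesis: take a projection $q \in \cA$ with $p \prec\hspace{-.17cm}\prec q$, i.e., $p \otimes 1_n \precsim q$ in $\cK \otimes \cA$ for every $n \ge 1$. Extend $\tau$ canonically to a lower semi-continuous densely defined trace $\tilde\tau$ on $\cK \otimes \cA$ with $\tilde\tau(p \otimes 1_n) = n\tau(p)$. Cuntz-monotonicity applied in $\cK \otimes \cA$ then yields $n\tau(p) \le \tilde\tau(q) = \tau(q)$ for every $n$, so $\tau(q) = +\infty$. But $q$ is a projection in $\cA$, hence $q \in \cP(\cA)$ and so $\tau(q) < \infty$. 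This contradiction shows $\tau = 0$.

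The only non-formal input is the monotonicity of lsc traces under Cuntz subequivalence together with the canonical extension of such a trace to the stabilization; both are routine consequences of the cutdown/approximation technique already used in Lemma~\ref{lm:lsc-eq}, and I would simply invoke or quickly verify them in passing. Beyond this, the entire proof is driven by the tension between the approximate unit of projections (which produces a projection with finite positive trace) and the $\prec\hspace{-.17cm}\prec$ hypothesis (which inflates that trace without bound on a single projection still required to lie in $\cP(\cA)$).
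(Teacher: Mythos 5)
Your argument is correct and is essentially the paper's proof run in contrapositive form: the paper fixes an arbitrary projection $p$, uses $p \prec\hspace{-.17cm}\prec q$ together with $\tau(q)<\infty$ (both projections lying in $\cP(\cA)$, hence in the domain of $\tau$) to conclude $\tau(p)=0$, and then invokes the approximate unit of projections to get $\tau=0$, whereas you use the approximate unit first to manufacture a projection of positive finite trace and then derive the same contradiction from $\tau(q)=\infty$. The only cosmetic point is that ``Cuntz-monotonicity'' of a trace on general positive elements holds only up to the norm constant $\|a\|$ (it is exact for projections via Murray--von Neumann subequivalence), but since you only need $\tau(p)>0$ from the factorization $(a-\ep)_+=d^*p_\lambda a p_\lambda d$, this changes nothing.
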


\begin{proof} Suppose that $\tau \in T_{\mathrm{lsc}}(\cA)$, let $p$ be a projection in $\cA$ and let $q \in \cA$ be another projection such that $p \prec \hspace{-.17cm} \prec q$. Since $p$ and $q$ belong to the Pedersen ideal of $\cA$, and hence to the domain of $\tau$, we find that $\tau(q) < \infty$, which entails that $\tau(p) = 0$. As $\cA$ has an approximate unit consisting of projection, this implies that $\tau = 0$.
\end{proof}

\noindent Here is an elementary example of an exact stably finite\footnote{A \Cs{} $\cA$ is said to be \emph{stably finite} if its stabilization $\cA \otimes \cK$ contains no infinite projections. This definition is meaningful when the \Cs{} has an approximate unit consisting of projections.}  \Cs{} satisfying the conditions of Proposition~\ref{prop:notrace}, and which accordingly admits no non-zero lower semi-continuous densely defined trace: 
Let $\cA$ be the inductive limit of the sequence $\cA_1 \to \cA_2 \to \cA_3 \to \cdots$, where $\cA_1 = \cK$, the \Cs{} of compact operators on a separable Hilbert space, where $\cA_{n+1} = \widetilde{\cA}_n \otimes \cK$, for $n \ge 1$, and where the inclusion $\cA_n \to \cA_{n+1}$ is given by $a \mapsto a \otimes e \in \cA_n \otimes \cK \subset \cA_{n+1}$, for some fixed $1$-dimensional projection $e \in \cK$.

\section{Invariant unbounded traces on \Cs s}
\label{sec:inv-traces}

\noindent We shall here use Monod's characterization of groups with the fixed-point property for cones to say something about when a (typically non-unital) \Cs{} $\cA$ with an action of a group $G$ admits an invariant trace. We are mostly interested in the existence of a (non-zero) invariant  densely defined lower semi-continuous trace, i.e., an invariant non-zero trace in the cone $T_{\mathrm{lsc}}(\cA)$ defined in Section~\ref{sec:traces-ideals}. But we shall also address the existence of more general traces (including singular traces and not densely defined traces). 

Recall from Definition~\ref{def:trace} that $T(\cI,\cA)$ is the cone of positive traces on $\cA$ with domain $\cI$, whenever $\cI$ is a hereditary symmetric ideal in $\cA$. The cone $T(\cI,\cA)$ is embedded in the complex vector space $\mathcal{L}(\cI)$ of all linear functionals on $\cI$ equipped with the locally convex weak topology induced by $\cI$. The dual space $\mathcal{L}(\cI)^*$ of $\mathcal{L}(\cI)$ is naturally isomorphic to $\cI$, cf.\ \cite[3.14]{Rudin:FunkAn} (and as remarked in \cite{Monod:cones}), i.e., $\mathcal{L}(\cI)^* = \{\varphi_a : a \in \cI\}$, where $\varphi_a$ denotes the functional $\varphi_a(\rho) = \rho(a)$, for $\rho \in \mathcal{L}(\cI)$ and $a \in \cI$. 
The dual space $\mathcal{L}(\cI)^*$ is equipped with the preordering given by $T^+(\cI,\cA)$, whereby an element $\varphi \in \mathcal{L}(\cI)^*$ is positive if $\varphi(\tau) \ge 0$, for all $\tau \in T(\cI,\cA)$. Observe that $\varphi \ge 0$ and $-\varphi \ge 0$ if and only if $\varphi(\tau)=0$, for all $\tau \in T(\cI,\cA)$. The map $a \mapsto \varphi_a$ is  a positive isomorphism, but not necessarily an order embedding, since $\varphi_a \ge 0$ does not necessarily imply that $a \ge 0$.

Monod considers real vector spaces in his paper \cite{Monod:cones}, while our vector spaces are complex by the nature of \Cs s. To translate some properties from Monod's paper to our language we shall occasionally consider the real vector space of all self-adjoint functionals $\varphi$ in  $\mathcal{L}(\cI)^*$, and we note that $\varphi_a$ is self-adjoint if and only if $a \in \cI$ is self-adjoint. 

The cone $T(\cI,\cA)$ is said to be \emph{proper} if $T(\cI,\cA) \cap -T(\cI,\cA) = \{0\}$, or, equivalently, if $0$ is the only trace in $T(\cI,\cA)$ that vanishes on $\cI \cap \cA^+$. This will hold if $\cI$ is the span of its positive elements. Most ideals considered in this paper have this property, including the Pedersen ideal $\cP(\cA)$, or any of the ideals $\cI_\cA(M)$, $\cI_\cA^G(M)$, $\cJ_\cA(M)$ or $\cJ_\cA^G(M)$, when $M$ is any non-empty subset of $\cA^+$, cf.\ Example~\ref{ex:Pedersen}, Corollary~\ref{cor:cone}, Corollary~\ref{lm:ideal} and Lemma~\ref{lm:J-positive}.

Recall also that $T_{\mathrm{lsc}}(\cA) = T(\cP(\cA),\cA)$. We allow for the possibility that  the cones $T(\cI,\cA)$ and $T_{\mathrm{lsc}}(\cA)$ are trivial, that is, equal to $\{0\}$, unless otherwise stated.

\begin{proposition} \label{prop:2}
For each \Cs{} $\cA$ and for each hereditary symmetric ideal $\cI$ in $\cA$, the cone $T(\cI,\cA)$ is weakly complete.  In particular, $T_{\mathrm{lsc}}(\cA)$ is weakly complete. 
\end{proposition}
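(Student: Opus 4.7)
The plan is to identify weak completeness of $T(\cI,\cA)$ with two facts: (a) the ambient space $\mathcal{L}(\cI)$ of all linear functionals on $\cI$ is already weakly complete in the topology of pointwise convergence on $\cI$, and (b) the cone $T(\cI,\cA)$ is weakly closed in $\mathcal{L}(\cI)$. Given these, any weakly Cauchy net in $T(\cI,\cA)$ has a limit in $\mathcal{L}(\cI)$, and that limit lies in $T(\cI,\cA)$.

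For (a), I would unfold the definition of the weak topology recalled just before the proposition: it is the initial topology making every $\varphi_a$, $a\in\cI$, continuous. So a net $(\tau_\lambda)$ in $\mathcal{L}(\cI)$ is weakly Cauchy precisely when $(\tau_\lambda(a))_\lambda$ is Cauchy in $\C$ for every $a \in \cI$. Define $\tau(a) := \lim_\lambda \tau_\lambda(a)$ for each $a \in \cI$. Linearity of each $\tau_\lambda$ plus continuity of addition and scalar multiplication in $\C$ force $\tau$ to be linear, hence $\tau \in \mathcal{L}(\cI)$, and by construction $\tau_\lambda \to \tau$ weakly. (This is the standard fact that the algebraic dual of any vector space is complete in the weak topology.)

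For (b), I would verify that the defining conditions of $T(\cI,\cA)$ pass to pointwise limits. Suppose $\tau_\lambda \in T(\cI,\cA)$ with $\tau_\lambda \to \tau$ weakly, and let $a \in \cI \cap \cA^+$; then $\tau_\lambda(a) \ge 0$ for every $\lambda$, so the limit $\tau(a) \ge 0$, showing positivity. For the trace identity, take any $x \in \cA$ with $x^*x \in \cI$ (so also $xx^* \in \cI$, since $\cI$ is symmetric); then the equality $\tau_\lambda(x^*x) = \tau_\lambda(xx^*)$ for every $\lambda$ passes to the limit, giving $\tau(x^*x) = \tau(xx^*)$. Hence $\tau \in T(\cI,\cA)$, so $T(\cI,\cA)$ is weakly closed in $\mathcal{L}(\cI)$.

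Combining (a) and (b) yields that $T(\cI,\cA)$ is weakly complete. The final sentence then follows by applying the result with $\cI = \cP(\cA)$, since $T_{\mathrm{lsc}}(\cA) = T(\cP(\cA),\cA)$ and the Pedersen ideal is a hereditary symmetric ideal (Example~\ref{ex:Pedersen}). There is no real obstacle: the only mildly delicate point is making sure one does not confuse the weak topology on $\mathcal{L}(\cI)$ (pointwise convergence on $\cI$) with a weak-$*$ topology coming from a Banach space predual, but once the former is used, each step is immediate from the algebraic definitions.
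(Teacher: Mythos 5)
Your proof is correct and follows essentially the same route as the paper: both take a weakly Cauchy net, observe that this means $(\tau_i(a))_i$ is Cauchy for every $a\in\cI$, define $\tau$ as the pointwise limit, and verify that $\tau$ is again a positive linear trace (the paper leaves this last verification as "easy to check," which you have simply written out). Your framing as "ambient space complete plus cone weakly closed" is a tidy repackaging of the same argument, not a different one.
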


\begin{proof} We must show that each weak Cauchy net in $T(\cI,\cA)$ is weakly convergent, i.e., if $(\tau_i)_i$ is a net in $T(\cI,\cA)$ such that $(\varphi(\tau_i))_i$ is Cauchy in $\C$, for all $\varphi \in \mathcal{L}(\cI)^*$, then the net converges weakly in $T(\cI,\cA)$. Since $\varphi_a(\tau_i) = \tau_i(a)$, for all $a \in \cI$, being weakly Cauchy implies that $(\tau_i(a))_i$ is Cauchy and hence convergent in $\C$, for all $a \in \cI$. Set $\tau(a) = \lim_i \tau_i(a)$, for all $a \in \cI$. It is easy to check that $\tau \colon \cI \to \C$ is in fact a trace, so it belongs to $T(\cI,\cA)$, and since $\varphi_a(\tau_i) \to \varphi_a(\tau)$, for all $a \in \cI$,  $\tau$ is the weak limit of the net $(\tau_i)_i$, as desired.
\end{proof}

\noindent Consider an action $\alpha$ of a (discrete) group $G$ on $\cA$. If $\cI$ is a $G$-invariant hereditary symmetric ideal in $\cA$, then $G$ induces an action of the cone $T(\cI,\cA)$ by $t.\tau = \tau \circ \alpha_t^{-1}$, for $t \in G$ and $\tau \in T(\cI,\cA)$. It is clear that this action of $G$ on $T(\cI,\cA)$ is continuous. Each automorphism of $\cA$ leaves the Pedersen ideal  invariant, so each group action on $\cA$ induces an action on the cone $T_{\mathrm{lsc}}(\cA)$.

The action of $G$ on $T(\cI,\cA)$ is in \cite{Monod:cones} said to be of \emph{cobounded type} if there exists a positive functional $\varphi$ in $\mathcal{L}(\cI)^*$ which $G$-dominates\footnote{If $\varphi$ and $\psi$ are self-adjoint functionals in $\mathcal{L}(\cI)^*$, then $\psi$ is $G$-dominated by $\varphi$ if $\psi \le \sum_{j=1}^n t_j.\varphi$, for some $n \ge 1$ and some $t_1,t_2, \dots, t_n \in G$.} any other self-adjoint functional in $\mathcal{L}(\cI)^*$.  
This condition is automatically satisfied when $\cI = \cJ_\cA^G(e)$, for some positive element $e \in \cA^+$, cf.\ Corollary~\ref{cor:cobdd} below, but not always when $\cI$ is the Pedersen ideal of $\cA$. However, in the latter case we can reformulate the coboundedness condition into more familiar statements for \Cs s.

A  positive element $e \in \cI$ is said to \emph{$G$-dominate} a self-adjoint element $a \in \cI$ if there are group elements $t_1, \dots, t_n$ such that $a \le \sum_{j=1}^n \alpha_{t_j}(e)$; and $e$ is said to \emph{tracially $G$-dominate} $a$ if there are group elements $t_1, \dots, t_n$ such that $\tau(a) \le \sum_{j=1}^n \tau(\alpha_{t_j}(e))$, for all $\tau \in T(\cI,\cA)$. The latter holds if and only if $\varphi_a \le \sum_{j=1}^n t_j^{-1}. \varphi_e$; in other words, if $\varphi_e$ $G$-dominates $\varphi_a$. We can summarize these remarks  as follows:

\begin{lemma} \label{lm:cobounded0}
Let $\cA$ be a \Cs{} equipped with  an action of a group $G$, and let $\cI$ be an invariant hereditary symmetric ideal in $\cA$. The induced action of $G$ on the cone $T(\cI,\cA)$ is  of cobounded type if and only if there is a positive element $e \in \cI$, which tracially $G$-dominates each self-adjoint element  $a \in \cI$.
\end{lemma}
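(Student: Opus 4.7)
The plan is to exploit the explicit dictionary between $\mathcal{L}(\cI)^*$ and $\cI$ and translate the abstract cone-theoretic formulation into the C*-language. The key identity, following from $(t.\tau)(x) = \tau(\alpha_{t^{-1}}(x))$, is $(t.\varphi_a)(\tau) = \tau(\alpha_t(a))$ for $a \in \cI$, $t \in G$, $\tau \in T(\cI,\cA)$. Consequently, the ordering-inequality $\varphi_a \le \sum_{j=1}^n t_j.\varphi_e$ in $\mathcal{L}(\cI)^*$ (with the preorder induced by $T(\cI,\cA)$) rewrites literally as $\tau(a) \le \sum_{j=1}^n \tau(\alpha_{t_j}(e))$ for every $\tau \in T(\cI,\cA)$. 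Under this dictionary, ``tracial $G$-dominance of $a$ by $e$'' and ``$G$-dominance of $\varphi_a$ by $\varphi_e$'' are the same relation.

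The $(\Leftarrow)$ direction is then immediate: if $e \in \cI$ is positive, then $\tau(e) \ge 0$ for every $\tau \in T(\cI,\cA)$, so $\varphi_e \in \mathcal{L}(\cI)^*$ is positive; combined with the dictionary, this says $\varphi_e$ $G$-dominates every self-adjoint $\varphi_a$, and since self-adjoint functionals in $\mathcal{L}(\cI)^*$ are exactly those of the form $\varphi_a$ with $a$ self-adjoint (as observed in the paragraph preceding the lemma), the action is of cobounded type.

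For $(\Rightarrow)$, I would start from a positive witness $\varphi = \varphi_b$ and first reduce to the case where $b$ is self-adjoint. Writing $b = b_1 + i b_2$ with $b_1, b_2 \in \cI$ self-adjoint, positivity $\varphi_b \ge 0$ says $\tau(b_1) + i\tau(b_2) = \tau(b) \in [0,\infty)$ for every $\tau \in T(\cI,\cA)$; the imaginary part forces $\tau(b_2) = 0$ and the real part gives $\tau(b_1) \ge 0$. Because $T(\cI,\cA)$ is $G$-invariant (as $\cI$ is), the same identities hold along every $G$-orbit, so $\tau(\alpha_t(b)) = \tau(\alpha_t(b_1))$ for all $t \in G$ and $\tau \in T(\cI,\cA)$. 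Thus cobounded type is equivalently witnessed by the self-adjoint element $b_1$. The remaining task is to upgrade $b_1$ to a positive element $e \in \cI \cap \cA^+$: the natural candidates are $e := b^*b + bb^* = 2(b_1^2 + b_2^2)$ or the functional-calculus approximants $e_n := b_1^2(|b_1| + 1/n)^{-1}$, each of which lies in $\cI \cap \cA^+$ (since $\cI$ is an ideal and the inverse factor lies in $\widetilde{\cA}$) and which converge in norm to $|b_1|$.

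The main obstacle is precisely this last step. A general hereditary symmetric ideal need not be closed under continuous functional calculus on its self-adjoint elements — the paper's own example $\cI = \cA \cdot \iota$ inside $C([-1,1])$ has $\iota \in \cI$ but $\iota^+ \notin \cI$ — so one cannot simply replace $b_1$ by its positive part. One must instead invoke the cobounded hypothesis itself to bound every self-adjoint $a \in \cI$ by a sum of $G$-translates of a single, carefully chosen positive element of $\cI$, for instance by combining the cobounded inequality for $\varphi_{b_1}$ applied to both $a$ and to auxiliary self-adjoints built from $b_1$, together with positivity of the trace $\tau$ on the domain $\cI$.
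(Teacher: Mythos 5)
Your dictionary between $\mathcal{L}(\cI)^*$ and $\cI$, and your proof of the ``if'' direction, are exactly what the paper does: the lemma is stated there as a ``summary'' of the two remarks preceding it (tracial $G$-domination of $a$ by $e$ is the same as $G$-domination of $\varphi_a$ by $\varphi_e$, and the self-adjoint functionals are precisely the $\varphi_a$ with $a=a^*$), with no further argument given. The problem is your ``only if'' direction, which is not a proof but a description of an obstacle followed by a plan (``one must instead invoke the cobounded hypothesis itself\dots by combining\dots together with positivity''). Two concrete issues. First, your reduction to a self-adjoint witness $b_1$ deduces $\tau(b_2)=0$ from $\mathrm{Im}\,\tau(b)=0$, which tacitly assumes $\tau(b_1)$ and $\tau(b_2)$ are real; positivity of $\tau$ guarantees this only on the linear span of $\cI\cap\cA^+$, and (as the paper's own example $\cI=C([-1,1])\cdot\iota$ shows) a hereditary symmetric ideal need not equal that span. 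Second, and more seriously, the candidates you propose for the positive element --- $b^*b+bb^*$ or the approximants $e_n$ --- are not shown to tracially $G$-dominate $b_1$, and in general they do not: even for a fixed positive $a$ the ratio $\tau(a)/\tau(a^2)$ is unbounded as $\tau$ ranges over traces (already for $\cA=c_0$, $\cI=\ell^1$, $a=(1/n^2)_n$ and the point evaluations), so no finite sum of translates of $b_1^2+b_2^2$ need dominate $\varphi_{b_1}$. So the step you flag as ``the main obstacle'' is genuinely open in your write-up.

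In fairness, you have put your finger on something the paper elides: its definition of cobounded type only supplies a positive \emph{functional} $\varphi_b$, the map $a\mapsto\varphi_a$ is explicitly noted not to be an order embedding, and the paper never explains why the witness may be taken of the form $\varphi_e$ with $e\in\cI\cap\cA^+$. The intended reading is evidently that it can, and in every place the ``only if'' direction is actually used the gap closes easily: for $\cI=\cP(\cA)$ (Lemma 3.10) the Pedersen ideal is closed under continuous function calculus vanishing at $0$, so any self-adjoint $a\in\cI$ satisfies $a\le|a|$ with $|a|\in\cI\cap\cA^+$, whence $\varphi_a\le\varphi_{|a|}$ and any self-adjoint witness can be replaced by a positive element; the reduction from a positive-functional witness to a self-adjoint one is likewise unproblematic there because $\cP(\cA)$ is spanned by its positive elements, so every trace is self-adjoint on it. If you restrict the lemma to such ideals (or simply take ``cobounded type'' to be witnessed by $\varphi_e$ with $e$ positive, as all the paper's applications do via Lemma 3.5), your argument closes; as written, for a general hereditary symmetric ideal, it does not.
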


\begin{lemma} 
\label{lm:cobounded1a}
Let $\cI$ be a $G$-invariant hereditary symmetric ideal in a \Cs{} $\cA$, and let $e$ be a positive element in $\cI$. Then the functional $\varphi_a$ is $G$-dominated by $\varphi_e$, for each self-adjoint element $a \in \cJ_\cA^G(e)$.
\end{lemma}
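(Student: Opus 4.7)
The plan is to reformulate everything in tracial terms and then close up a well-chosen cone. By the discussion preceding Lemma~\ref{lm:cobounded0}, $\varphi_a$ being $G$-dominated by $\varphi_e$ is the same as saying $e$ tracially $G$-dominates $a$, i.e., there exist $t_1,\dots,t_n \in G$ with $\tau(a) \le \sum_{j=1}^n \tau(\alpha_{t_j}(e))$ for every $\tau \in T(\cI,\cA)$. I therefore introduce
\[
C := \{\, a \in \cA^+ : e \text{ tracially } G\text{-dominates } a \,\}
\]
and aim to prove that $\cJ_\cA^G(e) \cap \cA^+ \subseteq C$; the self-adjoint case will then reduce to the positive one via Lemma~\ref{lm:J-positive}.

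The main step is to verify that $C$ enjoys the structural properties of the positive part of a symmetric, hereditary, $G$-invariant ideal containing $e$. That $e \in C$ (take $n=1$, $t_1 = 1_G$) and that $C$ is closed under addition (concatenate tuples) is immediate. For closure under scaling by $\lambda \ge 0$ and under $a \mapsto x^*ax$ (via Lemma~\ref{lm:traceinequality}) the bound acquires a scalar factor $\lambda$ or $\|x\|^2$, which I eliminate by repeating each $t_j$ in the tuple $\lceil \lambda \rceil$, respectively $\lceil \|x\|^2 \rceil$, many times. Closure under $\alpha_s$ is reindexing: $\tau\circ\alpha_s$ again lies in $T(\cI,\cA)$ since $\cI$ is $G$-invariant, and applying the bound for $a$ with $\tau\circ\alpha_s$ in place of $\tau$ shows that the tuple $st_1,\dots,st_n$ witnesses $\alpha_s(a) \in C$. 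Hereditariness of $C$ is monotonicity of positive traces. The decisive step is closure under the symmetric operation: if $a = xx^*$ with $x^*x \in C$, the trace identity $\tau(xx^*) = \tau(x^*x)$ transports the bound verbatim from $x^*x$ to $a$.

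Given these closures, Lemma~\ref{lm:cone} shows that $\mathrm{span}(C)$ is a hereditary ideal of $\cA$; it is symmetric and $G$-invariant because $C$ is, and it contains $e$. Minimality of $\cJ_\cA^G(e)$ then forces $\cJ_\cA^G(e) \subseteq \mathrm{span}(C)$, hence $\cJ_\cA^G(e) \cap \cA^+ \subseteq C$. For a self-adjoint $a \in \cJ_\cA^G(e)$, Lemma~\ref{lm:J-positive} yields a decomposition $a = b_1 - b_2$ with $b_1,b_2 \in \cJ_\cA^G(e)\cap\cA^+ \subseteq C$; applying the bound to $b_1$ and using $\tau(b_2) \ge 0$ yields the same bound for $a$. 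I expect the main obstacle to be conceptual rather than computational: one must recognize that the trace identity $\tau(xx^*) = \tau(x^*x)$ is precisely what makes $C$ closed under the symmetric operation \emph{for free}, which is what allows the conclusion to cover all of $\cJ_\cA^G(e)$ and not just $\cI_\cA^G(e)$; everything else is bookkeeping via the repetition trick to convert weighted trace inequalities into the weight-free form demanded by $G$-domination.
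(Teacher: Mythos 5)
Your proof is correct and follows essentially the same route as the paper: both define the cone $C$ of positive elements (tracially) $G$-dominated by $e$, verify that it is a $G$-invariant symmetric hereditary cone satisfying the hypotheses of Lemma~\ref{lm:cone}, and conclude $\cJ_\cA^G(e)\cap\cA^+\subseteq C$ by minimality. You merely phrase the argument directly in terms of traces rather than in the dual space $\mathcal{L}(\cI)^*$, and you spell out two details the paper leaves implicit (the repetition trick absorbing the scalars $\lambda$ and $\|x\|^2$, and the reduction of the self-adjoint case to the positive case via Lemma~\ref{lm:J-positive}); the only nitpick is that $C$ should be declared as a subset of $\cI\cap\cA^+$ rather than of $\cA^+$, so that $\tau(a)$ is defined.
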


\begin{proof} Let $C$ be the set of positive elements $a \in \cI$ such that $\varphi_a$ is $G$-dominated by $e$. We claim that $C$ is a $G$-invariant symmetric cone in $\cA^+$, which satisfies (i) and (ii) of Lemma~\ref{lm:cone}. Since $e$ clearly belongs to $C$, it will then follow from Lemma~\ref{lm:cone} that $\cJ_\cA^G(e) \cap \cA^+ \subseteq C$, and this will prove the lemma.

The set of positive $\varphi \in \mathcal{L}(\cI)^*$ that are $G$-dominated by $\varphi_e$ is a $G$-invariant hereditary cone in the positive cone of $\mathcal{L}(\cI)^*$. As the map $a \mapsto \varphi_a$ is linear, order preserving and satisfies $\varphi_{\alpha_t(a)} = t.\varphi_a$, for $a \in \cI^+$ and $t \in G$, we conclude that $C$ is a hereditary $G$-invariant cone in $\cA^+$. For each $x \in \cA$, for which $x^*x$ (and hence $xx^*$) belong to $\cI$, we have $\varphi_{x^*x} \le \varphi_{xx^*} \le \varphi_{x^*x}$, which implies that $C$ is symmetric.  It remains to show that $x^*ax$ belongs to $C$ when $a$ belongs to $C$ and $x$ belongs to $\widetilde{\cA}$.  To see this, recall from Lemma~\ref{lm:traceinequality} that $\tau(x^*ax) \le \|x\|^2 \tau(a)$, so $\varphi_{x^*ax} \le \|x\|^2 \varphi_a$, and the latter is $G$-dominated by $\varphi_e$ since $a \in C$ (and since $C$ is a cone).
\end{proof}

\noindent The corollary below follows immediately from Lemma~\ref{lm:cobounded1a}.

\begin{corollary} \label{cor:cobdd} The action of a group $G$ on the cone $T(\cJ_\cA^G(e), \cA)$ is of cobounded type whenever $\cA$ is a \Cs{} with an action of $G$ and  $e$ is a positive element in $\cA$.
\end{corollary}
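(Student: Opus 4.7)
The plan is to observe that Corollary~\ref{cor:cobdd} is essentially a direct consequence of Lemma~\ref{lm:cobounded1a} combined with Lemma~\ref{lm:cobounded0} (or, equivalently, with the definition of cobounded type). Set $\cI := \cJ_\cA^G(e)$. By construction, $\cI$ is a $G$-invariant hereditary symmetric ideal in $\cA$ containing $e$, so Lemma~\ref{lm:cobounded1a} applies with this ideal and this positive element.

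The natural candidate for the dominating functional is $\varphi_e \in \mathcal{L}(\cI)^*$. First I would note that $\varphi_e$ is positive: since $e \in \cI \cap \cA^+$, we have $\tau(e) \ge 0$ for every $\tau \in T(\cI,\cA)$, i.e. $\varphi_e(\tau) \ge 0$. Next, by Lemma~\ref{lm:cobounded1a}, for each self-adjoint $a \in \cI = \cJ_\cA^G(e)$ there exist $n\ge 1$ and $t_1,\dots,t_n \in G$ with $\varphi_a \le \sum_{j=1}^n t_j.\varphi_e$; in other words, $\varphi_e$ $G$-dominates $\varphi_a$.

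To finish, I would invoke the identification $\mathcal{L}(\cI)^* \cong \cI$ via $a \mapsto \varphi_a$ from the discussion preceding Lemma~\ref{lm:cobounded0}, together with the observation there that $\varphi_a$ is self-adjoint precisely when $a$ is self-adjoint. Thus every self-adjoint functional in $\mathcal{L}(\cI)^*$ is of the form $\varphi_a$ for some self-adjoint $a \in \cI$, and the previous paragraph shows that each such $\varphi_a$ is $G$-dominated by $\varphi_e$. This is exactly the definition of the action of $G$ on $T(\cI,\cA)$ being of cobounded type.

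There is no real obstacle here: the substantive work has already been done in Lemma~\ref{lm:cobounded1a}, where the fact that the set of positive elements whose associated functionals are $G$-dominated by $\varphi_e$ forms a $G$-invariant symmetric hereditary subcone satisfying the hypotheses of Lemma~\ref{lm:cone} was established, allowing one to deduce that this cone contains all of $\cJ_\cA^G(e) \cap \cA^+$. The corollary is a straightforward re-packaging of that statement.
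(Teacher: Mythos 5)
Your proposal is correct and matches the paper's approach: the paper simply states that the corollary "follows immediately from Lemma~\ref{lm:cobounded1a}," and your argument — applying that lemma with $\cI = \cJ_\cA^G(e)$ and using the identification of self-adjoint functionals in $\mathcal{L}(\cI)^*$ with self-adjoint elements of $\cI$ — is exactly the intended deduction, just spelled out in more detail.
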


\noindent The action of a group on the Pedersen ideal of a \Cs{} is not always of cobounded type, as  illustrated in the proposition below, that covers the case of commutative \Cs s, and which paraphrases and expands a remark on page 71 in \cite{Monod:cones}. We remind the reader that the action of a group $G$ on a locally compact Hausdorff space is \emph{co-compact} if $X = G.K$, for some compact subset $K$ of $X$.

\begin{proposition} \label{lm:cocompact} Let $X$ be a locally compact Hausdorff space equipped with a continuous action of a group $G$. Then the following conditions are equivalent:
\begin{enumerate}
\item The action of $G$ on $X$ is co-compact.
\item $X$ is compact in the (non-Hausdorff) topology on $X$ consisting of the $G$-invariant open subsets of $X$.
\item The action of $G$ on the cone of Radon measures on $X$ equipped with the vague topology is of cobounded type.
\item The action of $G$ on the cone $T_{\mathrm{lsc}}(C_0(X))$ is of cobounded type.
\end{enumerate}
\end{proposition}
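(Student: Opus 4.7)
The plan is to prove (i)$\Leftrightarrow$(ii) by elementary topology, reduce (iii)$\Leftrightarrow$(iv) to the standard identification of the cone of lower semi-continuous traces on $C_0(X)$ with the cone of Radon measures on $X$, and finally establish (i)$\Leftrightarrow$(iv) using Lemma~\ref{lm:cobounded0}.

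For (i)$\Leftrightarrow$(ii), the collection of $G$-invariant open subsets of $X$ is closed under arbitrary unions and finite intersections and hence defines a topology on $X$. If $X = G.K$ with $K$ compact, any $G$-invariant open cover of $X$ restricts to an open cover of $K$; a finite subcover of $K$ already covers all of $G.K = X$ by $G$-invariance of its members. Conversely, assuming (ii), the $G$-invariant open sets $G.U$, where $U$ ranges over open sets of compact closure, cover $X$ by local compactness, so a finite subcover produces $X = G.(U_1 \cup \dots \cup U_n) \subseteq G.\overline{U_1 \cup \dots \cup U_n}$ with compact generator.

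For (iii)$\Leftrightarrow$(iv), recall that $\cP(C_0(X)) = C_c(X)$, so by the Riesz representation theorem the cone $T_{\mathrm{lsc}}(C_0(X)) = T(C_c(X),C_0(X))$ is canonically identified with the cone of positive Radon measures on $X$. Under this identification the $G$-actions match, and the weak topology induced by the dual $\mathcal{L}(C_c(X))^* \cong C_c(X)$ is precisely the vague topology, so the two coboundedness statements translate into each other verbatim.

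For (i)$\Leftrightarrow$(iv), Lemma~\ref{lm:cobounded0} reduces coboundedness to the existence of a positive $e \in C_c(X)$ that tracially $G$-dominates every self-adjoint $a \in C_c(X)$. In the commutative setting, since point masses are Radon measures, the tracial inequality $\tau(a) \le \sum_j \tau(t_j.e)$ for all $\tau$ is equivalent, upon evaluation at $\delta_x$, to the pointwise inequality $a \le \sum_j t_j.e$. Given (i) with $X = G.K$, choose $e \in C_c(X)^+$ with $e \ge 1_K$ by Urysohn; for any real-valued $a \in C_c(X)$, the compact set $\mathrm{supp}(a) \subseteq G.K$ is covered by finitely many translates $t_1.K, \dots, t_n.K$, so $a \le \|a\|_\infty \sum_j t_j.e$, and repeating each $t_j$ a sufficient integer number of times yields a sum of translates of $e$ dominating $a$. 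Conversely, given such an $e$, set $K := \mathrm{supp}(e)$; for each $x \in X$ pick $a \in C_c(X)^+$ with $a(x) > 0$ and evaluate $a \le \sum_j t_j.e$ at $x$ to obtain $x \in t_j.K$ for some $j$, whence $X = G.K$. The main bookkeeping point is the reduction of tracial $G$-domination to pointwise domination in the commutative case; once that is in hand, the remaining ingredients are standard facts about $C_c(X)$, Radon measures, and local compactness.
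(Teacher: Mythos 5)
Your proposal follows essentially the same route as the paper: (i)$\Leftrightarrow$(ii) by elementary topology, (iii)$\Leftrightarrow$(iv) via Riesz, and (i)$\Leftrightarrow$(iv) by reducing cobounded type to $G$-domination of compactly supported functions. You are in fact more explicit than the paper on two points it glosses over: that tracial $G$-domination reduces to pointwise domination because point masses are traces, and that the scalar $\|a\|_\infty$ must be absorbed by repeating the translates, since $G$-domination does not permit coefficients.

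There is one step that does not work as written. In (i)$\Rightarrow$(iv) you assert that the compact set $\mathrm{supp}(a)\subseteq G.K$ ``is covered by finitely many translates $t_1.K,\dots,t_n.K$'' and invoke compactness; but the translates $t.K$ are closed, not open, so compactness of $\mathrm{supp}(a)$ does not yield a finite subcover from the cover $\{t.K\}_{t\in G}$. The repair is exactly the device the paper uses: by local compactness replace $K$ with the closure of a precompact open set $U$ containing the original compact generator, so that $U\subseteq K$, $K$ is compact, and $G.U=X$; then $\{t.U\}_{t\in G}$ is an honest open cover of $\mathrm{supp}(a)$, a finite subcover $t_1.U,\dots,t_n.U$ exists, and choosing $e\ge 1_K$ gives $\sum_j t_j.e\ge 1$ on $\mathrm{supp}(a)$, after which your argument goes through. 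With this one-line adjustment the proof is complete and agrees with the paper's.
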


\begin{proof} (i) $\Rightarrow$ (ii). Let $K$ be a compact subset of $X$ witnessing co-compactness of the action.  Let $\{U_i\}_{i \in I}$ be a collection of invariant open sets that covers $X$. Select a finite subset $F\subseteq I$ such that $\{U_i\}_{i \in F}$ covers $K$. Then $\bigcup_{i \in F} U_i = X$, being a $G$-invariant set that contains $K$.

(ii) $\Rightarrow$ (i). Let $\{U_i\}_{i \in I}$ be the collection of all open pre-compact subsets of $X$. Then $X = \bigcup_{i \in I} U_i$, because $X$ is locally compact. For each $i \in I$, set $V_i = \bigcup_{t \in G} t.U_i$. The families $\{U_i\}_{i \in I}$ and $\{V_i\}_{i \in I}$ are both upwards directed (both are closed under forming finite unions). It follows by compactness of $X$ in the topology of invariant open sets that $X = V_i$, for some $i \in I$. Hence $X = \bigcup_{t \in G} t.K$, when $K$ is the (compact) closure of $U_i$. 

(i) $\Rightarrow$ (iv). The cone $T_{\mathrm{lsc}}(C_0(X))$ is embedded into the vector space $\mathcal{L}(C_c(X))$ equipped with the weak topology from $C_c(X)$; and the dual space, $\mathcal{L}(C_c(X))^*$, is equal to $C_c(X)$. By co-compactness of the action we can find sets $U \subseteq K \subseteq X$,  such that $K$ is compact, $U$ is open, and $G.U = X$. Let $f \in C_c(X)$ be such that $1_K \le f \le 1$. Then any real valued function $g \in C_c(X)$ is $G$-dominated by $f$. Indeed, if $F$ is a finite subset of $G$ such that the support of $g$ is contained in $\bigcup_{t \in F} t.U$, then  $g \le \|g\|_\infty \sum_{t \in F} t.f$.

(iv) $\Rightarrow$ (i). Following the set-up of the proof above we can find a positive function $f \in C_c(X)$ which $G$-dominates any other real valued function in $C_c(X)$. Let $K$ be the support of $f$. Let $x \in X$ and choose a positive function $g \in C_c(X)$ such that $g(x)=1$. Then $g \le \sum_{t \in F} t.f$ for some finite subset $F$ of $G$. Thus $t.x \in K$, for some $t \in K$. This shows that $X = G.K$. 

(iii) $\Leftrightarrow$ (iv). By Riesz' theorem there is a one-to-one correspondance between Radon measures on $X$ and positive linear functionals (hence traces) on $C_c(X) = \cP(C_0(X))$. 
\end{proof}

\noindent We have previously, in Theorem~\ref{thm:existtrace}, considered \Cs s $\cA$ whose primitive ideal space, $\mathrm{Prim}(\cA)$, is compact, which happens if  whenever $\{\cI_\alpha\}$ is an upwards directed net of closed two-sided ideals in $\cA$ such that $\bigcup_\alpha \cI_\alpha$ is dense in $\cA$, then $\cA=\cI_\alpha$, for some $\alpha$. Let us say that a \Cs{} $\cA$ equipped with an action of a group $G$ is \emph{$G$-compact} if whenever $\{\cI_\alpha\}$ is an upwards directed net of $G$-invariant closed two-sided ideals in $\cA$ such that $\bigcup_\alpha \cI_\alpha$ is dense in $\cA$, then $\cA=\cI_\alpha$, for some $\alpha$. In the commutative case this is equivalent to condition (ii) of Proposition~\ref{lm:cocompact}. In the non-commutative case we have the following:

\begin{proposition} \label{prop:cobdd}
Let $\cA$ be a \Cs{} with an action of a group $G$, and suppose that $\cA$ is $G$-compact. 
Then the induced action of $G$ on the cone $T_{\mathrm{lsc}}(\cA)$ is of cobounded type. 
\end{proposition}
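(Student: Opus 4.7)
The plan is to find a single positive element $e$ in the Pedersen ideal whose $G$-orbit generates $\cP(\cA)$ as a hereditary ideal, and then invoke Lemma~\ref{lm:cobounded1a} and Lemma~\ref{lm:cobounded0}. More precisely, by Lemma~\ref{lm:cobounded1a}, if we can produce $e \in \cP(\cA)^+$ with $\cJ_\cA^G(e) = \cP(\cA)$, then for every self-adjoint $a \in \cP(\cA)$ the functional $\varphi_a$ is $G$-dominated by $\varphi_e$, which by Lemma~\ref{lm:cobounded0} is exactly what it means for the action of $G$ on $T(\cP(\cA),\cA) = T_{\mathrm{lsc}}(\cA)$ to be of cobounded type.

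The production of $e$ is where $G$-compactness enters. I would consider the family
\[
\mathcal{F} = \big\{ \overline{\cI_\cA^G}(e) : e \in \cP(\cA)^+ \big\}
\]
of $G$-invariant closed two-sided ideals. This family is upwards directed, since for $e_1, e_2 \in \cP(\cA)^+$ the sum $e = e_1 + e_2$ also lies in $\cP(\cA)^+$ and satisfies $0 \le e_i \le e$, so that $e_i \in \cI_\cA(e) \subseteq \cI_\cA^G(e)$ and hence $\overline{\cI_\cA^G}(e_i) \subseteq \overline{\cI_\cA^G}(e)$ for $i=1,2$. The union $\bigcup_{e \in \cP(\cA)^+} \overline{\cI_\cA^G}(e)$ contains $\cP(\cA)^+$ and is therefore dense in $\cA$. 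By $G$-compactness, some member of $\mathcal{F}$ coincides with $\cA$; pick $e \in \cP(\cA)^+$ such that $\overline{\cI_\cA^G}(e) = \cA$.

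It remains to verify that this $e$ does the job, i.e., $\cJ_\cA^G(e) = \cP(\cA)$. Since $\overline{\cI_\cA^G}(e) = \cA$, the algebraic ideal $\cI_\cA^G(e)$ is dense in $\cA$, and hence contains the Pedersen ideal (which is the smallest dense ideal): $\cP(\cA) \subseteq \cI_\cA^G(e)$. For the reverse inclusion, note that $\cP(\cA)$ is invariant under all automorphisms of $\cA$, so $G.\{e\} \subseteq \cP(\cA)^+$; since $\cP(\cA)$ is a hereditary ideal containing this set, Corollary~\ref{lm:ideal} gives $\cI_\cA^G(e) = \cI_\cA(G.\{e\}) \subseteq \cP(\cA)$. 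Because $\cP(\cA)$ is moreover symmetric, we obtain $\cP(\cA) = \cI_\cA^G(e) \subseteq \cJ_\cA^G(e) \subseteq \cP(\cA)$, so all three agree. Lemma~\ref{lm:cobounded1a} then yields the desired conclusion via Lemma~\ref{lm:cobounded0}.

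The only substantive step is recognizing the right indexing family and verifying upwards directedness; once the $G$-compactness hypothesis is applied to $\mathcal{F}$, the identifications $\cI_\cA^G(e) = \cJ_\cA^G(e) = \cP(\cA)$ follow from minimality of the Pedersen ideal among dense ideals together with its invariance properties, and the coboundedness is then immediate from the two preceding lemmas. I do not anticipate any serious obstacle here.
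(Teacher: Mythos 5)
Your proof is correct and follows essentially the same route as the paper: apply $G$-compactness to the upwards directed family of $G$-invariant closed ideals generated by positive elements of $\cP(\cA)$, identify the resulting hereditary $G$-invariant ideal with $\cP(\cA)$ via minimality of the Pedersen ideal among dense ideals, and conclude by Lemma~\ref{lm:cobounded1a} and Lemma~\ref{lm:cobounded0}. The only (immaterial) difference is that you work with $\cI_\cA^G(e)$ where the paper works directly with $\cJ_\cA^G(e)$, and you spell out the directedness and the chain $\cI_\cA^G(e)=\cJ_\cA^G(e)=\cP(\cA)$ in more detail.
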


\begin{proof}  The family $\{\overline{\cJ_\cA^G}(e)\}$ of $G$-invariant ideals in $\cA$, where $e \in \cP(\cA)^+$, is upward directed and its union is dense in $\cA$. Hence by the $G$-compactness assumption there is a positive element  $e$ in $\cP(\cA)$ such that the hereditary ideal $\cJ_\cA^G(e)$ is dense in $\cA$. As $e \in \cP(\cA)$, this entails that  $\cJ_\cA^G(e)= \cP(\cA)$. We can therefore conclude from Lemma~\ref{lm:cobounded1a} that each  self-adjoint $\varphi \in \mathcal{L}(\cP(\cA))^*$ is $G$-dominated by $\varphi_e$.
\end{proof}

\noindent Examples of $G$-compact \Cs s include any simple \Cs{} and, more generally, any \Cs{} with no non-trivial $G$-invariant closed two-sided ideals. Also any \Cs{} $\cA$, which contains a $G$-full projection, i.e., a projection $p$ such that $\overline{\cI_\cA^G}(p) = \cA$, is $G$-compact.

It is not hard to show that the conclusion of Proposition~\ref{prop:cobdd} still holds under the weaker assumption that $\cA/\cI_0$ is $G$-compact, where $\cI_0$ is the closed two-sided $G$-invariant ideal
$$\cI_0 = \bigcap_{\tau \in T_{\mathrm{lsc}}(\cA)} \{x \in \cA : \tau(x^*x) = 0\}.$$

Following Monod, \cite{Monod:cones}, a subset $M$ of $T(\cI,\cA)$ is \emph{bounded} if for each open neighborhood $0 \in \cU \subseteq \mathcal{L}(\cI)$  there is $r > 0$ such that $M \subseteq r \, \cU$. 

\begin{lemma} \label{lm:3} Let $\cA$ be a \Cs{} and let $\cI$ be a hereditary symmetric ideal in $\cA$.
A subset $M$ of $T(\cI,\cA)$ is bounded if and only if $\{\tau(a) : \tau \in M\}$ is bounded for all $a \in \cI$. In particular,  $M \subseteq T_{\mathrm{lsc}}(\cA)$ is bounded if and only if $\{\tau(a) : \tau \in M\}$ is bounded for all $a \in \cP(\cA)$.
\end{lemma}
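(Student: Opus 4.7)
The plan is to treat this as a routine exercise in the theory of locally convex spaces: the statement is the standard characterization of bounded sets for a weak topology, specialized to $\mathcal{L}(\cI)$ with its weak topology induced by the pairing with $\cI$ (via $a \mapsto \varphi_a$, $\varphi_a(\rho) = \rho(a)$, as recorded at the beginning of Section~\ref{sec:inv-traces}). A base of neighborhoods of $0$ in $\mathcal{L}(\cI)$ consists of the sets
\[
\cU(a_1, \dots, a_n; \ep) \; = \; \{\rho \in \mathcal{L}(\cI) : |\rho(a_j)| < \ep, \; j=1,\dots, n\},
\]
with $a_1, \dots, a_n \in \cI$ and $\ep > 0$, so Monod's definition of ``$M$ bounded'' just says each such basic $\cU$ absorbs $M$ (i.e.\ $M \subseteq r\,\cU$ for some $r > 0$).

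For the forward implication I would fix $a \in \cI$ and apply the boundedness assumption to the single-seminorm neighborhood $\cU = \{\rho : |\rho(a)| < 1\}$; the resulting $r > 0$ with $M \subseteq r\,\cU$ yields $|\tau(a)| < r$ uniformly for $\tau \in M$, which is exactly the pointwise boundedness statement.

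For the converse I would start from an arbitrary neighborhood of $0$ in $\mathcal{L}(\cI)$, refine it to a basic one $\cU(a_1, \dots, a_n; \ep)$, and set $c = \max_{j} \sup\{|\tau(a_j)| : \tau \in M\}$, which is finite by hypothesis. Any $r > c/\ep$ then satisfies $M \subseteq r\,\cU(a_1, \dots, a_n; \ep)$, since $|\tau(a_j)/r| \le c/r < \ep$ for each $\tau \in M$ and each $j$. The ``in particular'' clause is the special case $\cI = \cP(\cA)$, using that $T_{\mathrm{lsc}}(\cA) = T(\cP(\cA), \cA)$ by Definition~\ref{def:lsc-trace}. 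I do not anticipate any real obstacle; the only point worth double-checking is that the topology implicit in Monod's definition of bounded agrees with the weak topology on $\mathcal{L}(\cI)$ described in the paragraph preceding Proposition~\ref{prop:2}, so that the basic neighborhoods really are generated by the seminorms $\rho \mapsto |\rho(a)|$.
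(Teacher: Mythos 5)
Your proposal is correct and follows essentially the same route as the paper: both directions use the standard basic weak neighborhoods $\{\rho : |\rho(a_i)| < \ep\}$, testing a single seminorm for the forward implication and taking $r$ larger than the finitely many suprema $\sup\{|\tau(a_i)| : \tau \in M\}$ for the converse. The only cosmetic difference is that the paper fixes $\ep = 1$ in its basic neighborhoods and works with self-adjoint $a$, neither of which changes the argument.
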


\begin{proof} For each self-adjoint element $a \in \cI$, the set $\cU_a := 
\{\varphi \in \mathcal{L}(\cI) : |\varphi(a)| < 1\}$ is an open neighborhood of $0$. Hence, if $M$ is bounded, then $M \subseteq r \, \cU_a$ for some $r >0$, which  entails that $|\tau(a)| < r$, for all $\tau \in M$. Conversely, suppose that 
$\{\tau(a) : \tau \in M\}$ is bounded, for all $a \in \cI$, and let $\cU$ be an open neighborhood of $0$. Then there are $a_1, \dots, a_n$ in $\cI$ such that $\cU_{a_1, \dots, a_n} \subseteq \cU$, where 
$$\cU_{a_1, \dots, a_n}  = \{\varphi \in \mathcal{L}(\cI) : |\varphi(a_i)| < 1, \; \text{for} \: i=1,2, \dots, n\}.$$
Set $r_i = \sup \{|\tau(a_i)| : \tau \in M\}$ and set $r = 1+ \max_i r_i$. Then $|\tau(a_i)| < r$, for all $i$ and for all $\tau \in M$, whence $M \subseteq r \, \cU_{a_1,a_2, \dots, a_n} \subseteq r\, \cU$, as desired. 
\end{proof}

\noindent A trace $\tau \in T(\cI,\cA)$ will be said to be \emph{locally bounded} if it is bounded on the $G$-orbit of each $a \in \cI$. Clearly, any bounded trace is $G$-bounded (regardless of the properties of the action). By Monod, \cite{Monod:cones}, the action of $G$ on $T(\cI,\cA)$ is \emph{locally bounded} if $T(\cI,\cA)$ contains a non-zero bounded orbit.  

\begin{lemma} \label{lm:locallybounded} Let $G$ be a group acting on a \Cs{} $\cA$,  let $\cI$ be a hereditary symmetric invariant ideal in $\cA$, and let $e$ be a positive element in $\cA$.
\begin{enumerate}
\item The induced action of $G$ on $T(\cI,\cA)$ is locally bounded if and only if there is a non-zero locally bounded trace in $T(\cI,\cA)$.
\item A trace in $T(\cJ_\cA^G(e),\cA)$ is locally bounded if it is bounded on the orbit of $e$.
\end{enumerate}
\end{lemma}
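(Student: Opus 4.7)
The plan is to handle (i) as an immediate translation of Lemma~\ref{lm:3}, and to deduce (ii) from Lemma~\ref{lm:cobounded1a} together with an equivariance trick. For (i), recall that $s.\tau = \tau \circ \alpha_s^{-1}$, so $(s.\tau)(a) = \tau(\alpha_{s^{-1}}(a))$. As $s$ ranges over $G$, the set $\{(s.\tau)(a) : s \in G\}$ coincides (reindexing $s \mapsto s^{-1}$) with the set $\{\tau(b) : b \in G.a\}$ of values of $\tau$ on the orbit of $a$. Lemma~\ref{lm:3} says the orbit $G.\tau \subseteq T(\cI,\cA)$ is bounded if and only if $\{(s.\tau)(a) : s \in G\}$ is bounded for every $a \in \cI$, which is the definition of $\tau$ being locally bounded. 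This equivalence gives both directions of (i) at once: a non-zero bounded orbit produces a non-zero locally bounded trace, and conversely.

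For (ii), let $\tau \in T(\cJ_\cA^G(e),\cA)$ and suppose $K := \sup_{t \in G} \tau(\alpha_t(e)) < \infty$. By Lemma~\ref{lm:J-positive}, $\cJ_\cA^G(e)$ is the linear span of its positive cone, so by linearity it suffices to bound $\tau(\alpha_s(a))$ uniformly in $s \in G$ for each fixed positive $a \in \cJ_\cA^G(e)$. Applying Lemma~\ref{lm:cobounded1a} to $a$ yields $n \ge 1$ and group elements $t_1, \dots, t_n \in G$ (depending on $a$) such that
$$\tau'(a) \le \sum_{j=1}^n \tau'(\alpha_{t_j}(e)) \qquad \text{for every } \tau' \in T(\cJ_\cA^G(e),\cA).$$
Now substitute $\tau' = s^{-1}.\tau$, which lies in the cone by $G$-invariance, and unravel $s^{-1}.\tau = \tau \circ \alpha_s$ to obtain
$$\tau(\alpha_s(a)) \le \sum_{j=1}^n \tau(\alpha_{st_j}(e)) \le nK,$$
uniformly in $s \in G$, as desired.

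The only conceptual subtlety — and what I would flag as the heart of (ii) — is that the multiplicity $n$ and the elements $t_j$ produced by Lemma~\ref{lm:cobounded1a} depend on $a$ but \emph{not} on $s$; a naive application of that lemma separately to each $\alpha_s(a)$ would give an $s$-dependent count $n_s$ and no uniform bound. The correct move is to use the lemma once to obtain a single functional-level inequality for $\varphi_a$ and then translate it by $s^{-1}$ via the $G$-equivariance of $a \mapsto \varphi_a$; the hypothesis that $\tau$ is bounded on the orbit of $e$ does the rest.
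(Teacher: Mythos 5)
Your argument is correct, and part (i) matches the paper exactly (both are the immediate reformulation of Lemma~\ref{lm:3}). For part (ii), however, you take a genuinely different route. The paper argues directly: it sets $C = \{a \in \cJ_\cA^G(e) \cap \cA^+ : \tau \text{ is bounded on the orbit of } a\}$, checks via Lemma~\ref{lm:traceinequality} that $C$ is a $G$-invariant symmetric hereditary cone satisfying conditions (i) and (ii) of Lemma~\ref{lm:cone}, and concludes from minimality of $\cJ_\cA^G(e)$ that $C$ exhausts the positive part of the ideal. You instead invoke Lemma~\ref{lm:cobounded1a} as a black box to get a single functional inequality $\varphi_a \le \sum_{j} t_j.\varphi_e$ with $n$ and the $t_j$ depending only on $a$, and then translate it by $s^{-1}$ using equivariance and the fact that $T(\cJ_\cA^G(e),\cA)$ is $G$-invariant. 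Both are sound; your observation that the uniformity in $s$ is the crux, and that it comes for free from the functional-level (trace-independent) formulation of tracial domination, is exactly right. The two proofs are cousins rather than strangers: Lemma~\ref{lm:cobounded1a} is itself proved by the same ``cone of good elements'' bootstrap that the paper applies here directly, so the paper's route is the more self-contained one, while yours is shorter on the page and makes the logical dependence on coboundedness-type domination explicit. Your reduction to positive elements via Lemma~\ref{lm:J-positive} correctly handles the fact that local boundedness is required for all elements of the ideal, not just the positive ones.
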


\begin{proof} (i) is an immediate reformulation of Lemma~\ref{lm:3}. To prove (ii), let $\tau \in T(\cJ_\cA^G(e),\cA)$ be bounded on the orbit of $e$, and let
$C$ be the set of positive elements $a  \in \cJ_\cA^G(e)$ such that $\tau$ is bounded on the orbit of $a$. Then $C$ satisfies conditions (i) and (ii) of Lemma~\ref{lm:cone}, cf.\ Lemma~\ref{lm:traceinequality}, $e \in C$, and $C$ is $G$-invariant, so $C = \cJ_\cA^G(e) \cap \cA^+$ by Lemma~\ref{lm:cone}.
\end{proof}

\noindent Groups with the fixed-point property for cones allow invariant lower semi-continuous densely defined traces on a \Cs{} as follows:

\begin{theorem} \label{thm:a} Let $\cA$ be a \Cs{} with $T_{\mathrm{lsc}}(\cA) \ne \{0\}$, and let $G$ be a group with the fixed-point property for cones, which acts on $\cA$ making $\cA$ $G$-compact. Then there is a non-zero invariant, necessarily lower semi-continuous, trace in $T_{\mathrm{lsc}}(\cA)$  if and only if there is a non-zero  locally bounded trace in $T_{\mathrm{lsc}}(\cA)$. 

If this is the case, then $T_{\mathrm{lsc}}(\cA \rtimes G)$ is non-zero, i.e., $\cA \rtimes G$ admits a non-zero lower semi-continuous densely defined trace.
\end{theorem}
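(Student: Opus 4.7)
The plan is to apply Monod's fixed-point property for cones directly to the cone $T_{\mathrm{lsc}}(\cA) = T(\cP(\cA),\cA)$, after verifying the three required hypotheses: the cone is proper and weakly complete, the induced $G$-action is of cobounded type, and it is locally bounded. Once a non-zero fixed point (i.e.\ a non-zero invariant lower semi-continuous densely defined trace) is produced, the crossed-product statement is a direct application of Lemma~\ref{prop:extending}.

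For the ``only if'' direction, the argument is trivial: any invariant trace $\tau$ has $t.\tau = \tau$ for every $t \in G$, so its orbit $\{\tau\}$ is a single point, hence bounded, making $\tau$ (non-zero and) locally bounded. So the content is in the ``if'' direction. First I would note that $T_{\mathrm{lsc}}(\cA)$ is weakly complete by Proposition~\ref{prop:2}, and proper because $\cP(\cA)$ is spanned by its positive elements (so any $\tau$ with $\tau|_{\cP(\cA)^+}=0$ is zero). Next, the action of $G$ on $T_{\mathrm{lsc}}(\cA)$ is of cobounded type by Proposition~\ref{prop:cobdd}, which is exactly where the $G$-compactness hypothesis is used: it furnishes a positive element $e \in \cP(\cA)$ with $\cJ_\cA^G(e) = \cP(\cA)$, and Lemma~\ref{lm:cobounded1a} then gives that every self-adjoint functional in $\mathcal{L}(\cP(\cA))^*$ is $G$-dominated by $\varphi_e$. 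Finally, local boundedness of the $G$-action on $T_{\mathrm{lsc}}(\cA)$ is equivalent, by Lemma~\ref{lm:locallybounded}(i), to the existence of a non-zero locally bounded trace in $T_{\mathrm{lsc}}(\cA)$, which is precisely the hypothesis.

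Having verified the three conditions, Monod's fixed-point property for cones yields a non-zero $\tau \in T_{\mathrm{lsc}}(\cA)$ with $t.\tau = \tau$ for all $t \in G$; this $\tau$ is automatically lower semi-continuous since it is defined on $\cP(\cA)$ (Theorem~\ref{thm:GKP}). For the last assertion, $\tau$ extends canonically to a lower semi-continuous trace on $\cA^+$ via \eqref{eq:tau}, and by Lemma~\ref{prop:extending} the composition $\tau \circ E$ with the canonical conditional expectation $E \colon \cA \rtimes G \to \cA$ is a lower semi-continuous densely defined trace on $\cA \rtimes G$ that is non-zero (since its restriction to $\cA$ equals $\tau \ne 0$), so $T_{\mathrm{lsc}}(\cA \rtimes G) \ne \{0\}$.

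The main obstacle is conceptual rather than computational: one must confirm that the locally convex setup of Monod (real vector space, self-adjoint functionals, coboundedness of the dual action) translates correctly to the complex situation of $T(\cI,\cA)$, using the identification $\mathcal{L}(\cI)^* \cong \cI$ and the remark that $\varphi_a$ is self-adjoint iff $a$ is. This translation has essentially been done in the paragraphs preceding Lemma~\ref{lm:cobounded0}, so what remains is simply to cite Monod's theorem with the correct inputs.
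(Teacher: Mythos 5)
Your proof is correct and follows essentially the same route as the paper: weak completeness from Proposition~\ref{prop:2}, coboundedness from Proposition~\ref{prop:cobdd} via $G$-compactness, local boundedness from Lemma~\ref{lm:locallybounded}, then Monod's fixed-point property and Lemma~\ref{prop:extending} for the crossed product. Your explicit remarks on properness of the cone and on the real/complex translation are details the paper handles in the discussion preceding the theorem rather than in the proof itself, but they are accurate and do not change the argument.
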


\begin{proof} The cone $T_{\mathrm{lsc}}(\cA)$ is weakly complete by Proposition~\ref{prop:2}, the action of $G$ on  $T_{\mathrm{lsc}}(\cA)$ is affine and continuous. By the assumption on $G$ there is a non-zero invariant trace in $T_{\mathrm{lsc}}(\cA)$ if the action of $G$ on $T_{\mathrm{lsc}}(\cA)$ is of cobounded type and locally bounded. The former holds by Proposition~\ref{prop:cobdd}, since $A$ is assumed to be $G$-compact, and the latter holds by Lemma~\ref{lm:locallybounded} if there is a non-zero locally bounded trace.  Conversely, any invariant trace in $T_{\mathrm{lsc}}(\cA)$ is trivially $G$-bounded. The last claim follows from Lemma~\ref{prop:extending}.
\end{proof}

\noindent
Consider the class of groups $G$ for which Theorem~\ref{thm:a} holds. This class contains the class of groups with the fixed-point propety for cones, and it is contained in the class of supramenable groups, cf.\ the proposition below, that paraphrases \cite[Theorem 1.1]{KelMonRor:supra}. We do not know if Theorem~\ref{thm:a} characterizes groups with the fixed-point property for cones, or if it characterizes the class of supramenable groups, or some intermediate class of groups. 

%\newpage

\begin{proposition} \label{prop:KMR}
The following conditions are equivalent for each group $G$:
\begin{enumerate}
\item $G$ is supramenable.
\item Whenever $G$ acts on a commutative \Cs{} $\cA$, such that $\cA$ is $G$-compact, then there is a non-zero invariant trace in $T_{\mathrm{lsc}}(\cA)$.
\item Whenever $G$ acts on a commutative \Cs{} $\cA$, then for each projection $p \in \cA$ there is an invariant lower semi-continuous trace $\tau \in T^+(\cA)$ with $\tau(p)=1$.
\item Whenever $G$ acts on a commutative \Cs{} $\cA$,  then for each projection $p \in \cA$ there is a
lower semi-continuous trace $\tau \in T^+(\cA \rtimes G)$ with $\tau(p)=1$.
\end{enumerate}
\end{proposition}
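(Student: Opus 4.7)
The plan is to establish the four-way equivalence via the cycle (i) $\Rightarrow$ (iii) $\Leftrightarrow$ (iv), (iii) $\Rightarrow$ (i), combined with (i) $\Rightarrow$ (ii) $\Rightarrow$ (iii). The equivalence (iii) $\Leftrightarrow$ (iv) is routine via the canonical conditional expectation $E \colon \cA \rtimes G \to \cA$: given $\tau$ as in (iii), Lemma~\ref{prop:extending} gives that $\tau \circ E$ is a lower semi-continuous trace on $\cA \rtimes G$ with $(\tau \circ E)(p) = \tau(E(p)) = \tau(p) = 1$, since $p \in \cA$ and $E$ restricts to the identity on $\cA$. Conversely, given $\tau \in T^+(\cA \rtimes G)$ lsc with $\tau(p) = 1$, the restriction $\tau|_\cA$ is lsc and satisfies $\tau|_\cA(p) = 1$; traciality alone (no commutativity needed here) renders it $G$-invariant, since $\tau(\alpha_g(a)) = \tau(u_g a u_g^*) = \tau(a\, u_g^* u_g) = \tau(a)$ for $a \in \cA^+$.

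The heart of the argument is (i) $\Leftrightarrow$ (iii), which I would obtain by translating \cite[Theorem~1.1]{KelMonRor:supra} through Gelfand duality. Writing $\cA = C_0(X)$, a projection $p \in \cA$ corresponds to a compact open subset $K \subseteq X$ (clopen because $p$ takes values in $\{0,1\}$, compact because $p \in C_0$), and lower semi-continuous traces in $T^+(\cA)$ are in bijection with Radon measures on $X$ via integration, with $G$-invariance matching on the two sides. Under this dictionary, (iii) is exactly the statement that every $G$-action on a locally compact Hausdorff space $X$ admits, for each compact open $K \subseteq X$, a $G$-invariant Radon measure $\mu$ with $\mu(K) = 1$, which \cite[Theorem~1.1]{KelMonRor:supra} characterizes as supramenability of $G$. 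This simultaneously delivers both (i) $\Rightarrow$ (iii) and (iii) $\Rightarrow$ (i).

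For the equivalence with (ii): (i) $\Rightarrow$ (ii) again follows from \cite{KelMonRor:supra}, since by Proposition~\ref{lm:cocompact} $G$-compactness of $C_0(X)$ is the same as co-compactness of the $G$-action on $X$, and supramenability produces a non-zero $G$-invariant Radon measure in that setting. For (ii) $\Rightarrow$ (iii), given a projection $p \in \cA = C_0(X)$ corresponding to a compact open $K$, I would pass to the $G$-invariant closed ideal $\cA_0 := \overline{\cI_\cA^G}(p) \cong C_0(G.K)$, which is itself $G$-compact by compactness of $K$ (any upward directed family of $G$-invariant open subsets of $G.K$ whose union is $G.K$ contains $K$ in some member, which by $G$-invariance must then equal $G.K$). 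Applying (ii) yields a non-zero invariant $\tau_0 \in T_{\mathrm{lsc}}(\cA_0)$; a quick finite-cover argument forces $\tau_0(p) > 0$, for otherwise invariance would give $\tau_0(\mathbf{1}_{gK}) = 0$ for all $g$, and by compact support every positive element of $\cP(\cA_0) = C_c(G.K)$ is dominated by a scalar multiple of a finite sum $\sum_{g \in F} \mathbf{1}_{gK}$, making $\tau_0$ vanish on $\cP(\cA_0)$. After normalization and extension of $\tau_0$ to $\cA$ via \eqref{eq:tau}, (iii) follows. The main obstacle throughout is the invocation of \cite[Theorem~1.1]{KelMonRor:supra}; modulo that measure-theoretic input, the proposition is a dictionary-level restatement through Gelfand duality and the conditional expectation.
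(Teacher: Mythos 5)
Your proof is correct and follows essentially the same route as the paper: both rest on Gelfand duality, the identification of lower semi-continuous traces with Radon measures, Proposition~\ref{lm:cocompact}, and \cite[Theorem~1.1]{KelMonRor:supra}, and both force $\tau_0(p)>0$ in (ii)~$\Rightarrow$~(iii) by observing that every positive element of $\cP(\cA_0)$ is dominated by finitely many translates of $p$. The only structural difference is how the cycle is closed: the paper returns from (iv) to (i) via the purely infinite action on $\mathbf{K}^*$, whereas you prove (iv)~$\Rightarrow$~(iii) directly by restricting the trace to $\cA$ (using traciality for automatic invariance) and recover (i) from (iii) via the measure-theoretic characterization of supramenability --- both steps are valid.
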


\begin{proof}  (i) $\Rightarrow$ (ii). Let $X$ be the spectrum of $\cA$, so that $\cA = C_0(X)$. If $\cA$ is $G$-compact, then the action of $G$ on $X$ is co-compact, cf.\ Proposition~\ref{lm:cocompact}, so by  \cite[Theorem 1.1]{KelMonRor:supra} there is a non-zero invariant Radon measure on $X$, since $G$ is supramenable. Integrating with respect to this measure gives a non-zero invariant trace in $T_{\mathrm{lsc}}(\cA)$.

(ii) $\Rightarrow$ (iii). The \Cs{} $\cB = \overline{\cI_\cA^G}(p)$ is $G$-compact being generated by a projection, so (ii) implies that there is a non-zero invariant  $\tau \in T_{\mathrm{lsc}}(\cB)$. We must show that $0 < \tau(p) < \infty$. The latter inequality holds because
$p \in \cP(\cB)$. If $\tau(p)=0$, then $\tau(x) = 0$ for all $x \in \cI_\cB^G(p)$, and
 $\cI_\cB^G(p) = \cP(\cB)$, cf.\ Lemma~\ref{lm:symmetric-proj},  contradicting that $\tau\ne 0$. Finally, we can extend $\tau$ to a  lower semi-continuous trace in $T^+(\cA)$  by \eqref{eq:tau}.

(iii) $\Rightarrow$ (iv). This follows from Lemma~\ref{prop:extending}.

(iv) $\Rightarrow$ (i). If $G$ is non-supramenable, then, by
\cite[Theorem 1.1]{KelMonRor:supra}, it admits a minimal, free, purely infinite action on the locally compact non-compact Cantor set $\mathbf{K}^*$, making the crossed product $C_0(\mathbf{K}^*) \rtimes G$ purely infinite (and simple). In particular, there is no non-zero lower semi-continuous trace on $C_0(\mathbf{K}^*) \rtimes G$. 
\end{proof}

\begin{remark} \label{rem:unital} If $\tau$ is a trace on a \Cs{} $\cB$ such that $\tau(p)=1$, for some projection $p \in \cB$, then the restriction of $\tau$ to the unital corner \Cs{} $p\cB p$ is a tracial state. Conversely, each tracial state $\tau$  on $p\cB p$ extends to a lower semi-continuous trace in $T^+(\cB)$ normalizing $p$.

To see this, consider the family $\{\mathcal{H}_\alpha\}_\alpha$ of all $\sigma$-unital hereditary sub-\Cs s of $\overline{\cI_\cB}(p)$ containing $p\cB p$, and observe that $p\cB p$ is a full hereditary sub-\Cs{} of $\mathcal{H}_\alpha$, for each $\alpha$. By Brown's theorem, the  inclusion $p\cB p \to p \cB p \otimes \cK$, where $\cK$ is the \Cs{} of compact operators on a separable Hilbert space, extends to an inclusion  $\mathcal{H}_\alpha \to p \cB p \otimes \cK$, for each $\alpha$. The tracical state $\tau$ on $p\cB p$ extends (uniquely) to a lower semi-continuous trace on the positive cone of $p \cB p \otimes \cK$, and thus restricts to a lower semi-continuous trace $\tau_\alpha$ on the positive cone of $\mathcal{H}_\alpha$. The extension of $\tau$ to $\tau_\alpha$ on $\mathcal{H}_\alpha$ is unique.

The familiy $\{\mathcal{H}_\alpha\}_\alpha$ is upwards directed with union $\bigcup_\alpha \mathcal{H}_\alpha = \overline{\cI_\cB}(p)$. To see the first claim, recall that a \Cs{} is $\sigma$-unital precisely if it contains a strictly positive element. Let $b_\alpha \in \mathcal{H}_\alpha$ be strictly positive. Then $b_\alpha + b_\beta$ is a strictly positive element in $\overline{(b_\alpha + b_\beta)\cB(b_\alpha + b_\beta)}$, and the latter is therefore a $\sigma$-unital sub-\Cs s of $\overline{\cI_\cB}(p)$ containing $\mathcal{H}_\alpha$ and $\mathcal{H}_\beta$. For the second claim, if $a \in \overline{\cI_\cB}(p)$ is positive, then $\overline{(a+p)\cB(a+p)}$ is a $\sigma$-unital hereditary sub-\Cs{} of $\overline{\cI_\cB}(p)$ containing $p\cB p$ and $a$. There exists therefore a (unique) lower semi-continuous trace $\tau$ in $T^+(\overline{\cI_\cB}(p))$ that extends each $\tau_\alpha$, and hence $\tau$.

Extend $\tau$ further to a lower semi-continuous trace in $T^+(\cB)$ using  \eqref{eq:tau}. Finally, if we wish, we can linearize $\tau$  using Proposition~\ref{prop:tau->tau'}.
 \end{remark}

\noindent
If we depart from lower semi-continuous densely defined traces and consider possibly singular traces, then we do obtain a $C^*$-algebraic characterization of groups with the fixed-point property for cones. The theorem below is a non-commutative analog of the equivalence between (1) and (4) of Theorem 7 in \cite{Monod:cones}.

\begin{theorem}  \label{thm:b}
The following conditions are equivalent for any group $G$:
\begin{enumerate}
\item $G$ has the fixed-point property for cones.
\item Whenever $G$ acts on a \Cs{} $\cA$ and whenever $e$ is a positive element in $\cA$ for which there exists a non-zero trace in $T(\cJ_\cA^G(e),\cA)$, which is bounded on the $G$-orbit of $e$, then there exists an invariant trace in $T(\cJ_\cA^G(e),\cA)$ normalizing $e$.
\item Whenever $G$ acts on a \Cs{} $\cA$  and whenever $e$ is a positive element in $\cA$ for which there exists  a trace in $T^+(\cA)$  which is non-zero and bounded on the $G$-orbit of $e$, then there is an invariant trace in $T^+(\cA)$ normalizing $e$. 
\end{enumerate}
\end{theorem}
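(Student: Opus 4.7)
The plan is to prove the cycle (i)$\Rightarrow$(ii)$\Rightarrow$(iii)$\Rightarrow$(i), applying Monod's fixed-point property directly to the cone $T(\cJ_\cA^G(e),\cA)$ for the first implication, passing between the three settings by routine restriction and extension of traces for the second, and specializing to the translation action on $\ell^\infty(G)$ for the third.

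For (i)$\Rightarrow$(ii), fix a non-zero $\tau_0 \in T(\cJ_\cA^G(e),\cA)$ bounded on the $G$-orbit of $e$. The four Monod hypotheses for the action of $G$ on the cone $T(\cJ_\cA^G(e),\cA) \subseteq \mathcal{L}(\cJ_\cA^G(e))$ hold: weak completeness is Proposition~\ref{prop:2}; properness follows from Lemma~\ref{lm:J-positive}, which says $\cJ_\cA^G(e)$ is the linear span of its positive cone; coboundedness is Corollary~\ref{cor:cobdd}; and local boundedness holds because $\tau_0$, being bounded on the orbit of $e$, is bounded on every orbit in $\cJ_\cA^G(e) \cap \cA^+$ by Lemma~\ref{lm:locallybounded}(ii), so Lemma~\ref{lm:locallybounded}(i) applies. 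The fixed-point property then yields a non-zero invariant $\tau \in T(\cJ_\cA^G(e),\cA)$. To upgrade this to a trace with $\tau(e)=1$, I would show $\tau(e)>0$ by contradiction: the subset $C$ of $\cJ_\cA^G(e) \cap \cA^+$ on which $\tau$ vanishes is a $G$-invariant symmetric hereditary subcone of $\cA^+$ satisfying conditions (i) and (ii) of Lemma~\ref{lm:cone} (using Lemma~\ref{lm:traceinequality} to absorb conjugation by elements of $\widetilde\cA$); if $\tau(e)=0$ then $e \in C$, so by minimality of $\cJ_\cA^G(e)$ as a symmetric hereditary $G$-invariant ideal, $\mathrm{span}(C)=\cJ_\cA^G(e)$, forcing $\tau=0$. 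Rescaling yields $\tau(e)=1$.

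For (ii)$\Rightarrow$(iii), reading ``non-zero'' in (iii) as non-zero on the $G$-orbit of $e$, the restriction of $\tau'$ to $\cJ_\cA^G(e)$ is a non-zero trace in $T(\cJ_\cA^G(e),\cA)$, and it takes finite values on every positive element there (combining the orbital bound on $e$ with Corollary~\ref{lm:ideal} and Lemma~\ref{lm:traceinequality}). Applying (ii) produces an invariant $\tau_1 \in T(\cJ_\cA^G(e),\cA)$ with $\tau_1(e)=1$; extending by formula~\eqref{eq:tau'} to $T^+(\cA)$ preserves both $G$-invariance (because the domain $\cJ_\cA^G(e)$ is invariant) and the normalization. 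For (iii)$\Rightarrow$(i), apply (iii) to $\cA=\ell^\infty(G)$ with the left-translation action and $e=f$ an arbitrary non-zero positive function: point-evaluation at any $t_0 \in G$ with $f(t_0)>0$ is a trace $\tau' \in T^+(\ell^\infty(G))$ that is non-zero on $f$ and bounded on its orbit by $\|f\|_\infty$. Hypothesis (iii) then delivers a $G$-invariant trace $\tau$ on $\ell^\infty(G)$ with $\tau(f)=1$. The commutative identity $y^*\alpha_t(f)y = |y|^2\alpha_t(f)$ together with Corollary~\ref{lm:ideal} identifies $\cJ^G_{\ell^\infty(G)}(f)$ with Monod's subspace $\ell^\infty(G,f)$ of functions $G$-dominated by $f$; restricting $\tau$ thus produces an invariant integral on $\ell^\infty(G,f)$ normalizing $f$. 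This is precisely the reformulation of the fixed-point property for cones recorded in the introduction (Theorem~7 of \cite{Monod:cones}), so $G$ has that property.

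The step I expect to be most delicate is the normalization in (i)$\Rightarrow$(ii): Monod's theorem only guarantees \emph{some} non-zero fixed point in the cone, and one must rule out the possibility that the invariant trace it produces vanishes at $e$. The minimality of $\cJ_\cA^G(e)$ among symmetric hereditary $G$-invariant ideals, combined with the conjugation stability encoded in Lemma~\ref{lm:cone}(i), is what forces an invariant trace vanishing at $e$ to vanish on the whole ideal. A secondary point of care is the correct interpretation of ``non-zero'' in (iii); if read too weakly (non-zero somewhere on $\cA^+$, potentially far from $\cJ_\cA^G(e)$), the hypothesis would lose contact with $e$ and the implication (ii)$\Rightarrow$(iii) would break.
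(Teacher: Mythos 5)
Your proof is correct and follows essentially the same route as the paper: Monod's fixed-point property applied to the weakly complete, cobounded, locally bounded cone $T(\cJ_\cA^G(e),\cA)$ for (i)$\Rightarrow$(ii), the specialization to $\ell^\infty(G)$ and \cite[Theorem 7]{Monod:cones} to close the loop, and Proposition~\ref{prop:tau->tau'} to pass between (ii) and (iii). The only differences are cosmetic (you run a cycle where the paper proves (ii)$\Leftrightarrow$(i) and (ii)$\Leftrightarrow$(iii) separately), and your spelled-out normalization argument via the minimality of $\cJ_\cA^G(e)$ is a correct expansion of the paper's terse appeal to Lemmas~\ref{lm:traceinequality} and~\ref{lm:ideal}, as is your (accurate) reading of ``non-zero'' in (iii) as non-vanishing on the orbit of $e$.
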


\begin{proof}
(i) $\Rightarrow$ (ii). Set $\cI = \cJ_\cA^G(e)$. Then $T(\cI,\cA)$ is weakly complete by Proposition~\ref{prop:2}, the action of $G$ on $T(\cI,\cA)$ is locally bounded, by Lemma~\ref{lm:locallybounded}, and of cobounded type by Corollary~\ref{cor:cobdd}. The cone of traces $T(\cI,\cA)$ therefore has a non-zero fixed point $\tau_0$ since $G$ has the fixed-point property for cones. As $\tau_0$ is non-zero and invariant we can use Lemma~\ref{lm:traceinequality}, Lemma~\ref{lm:ideal} and the fact that $e \in \cI$  to conclude that $0 < \tau_0(e)  <\infty$.

(ii) $\Rightarrow$ (i). Property (ii), applied to $\cA = \ell^\infty(G)$ (see also Section~\ref{sec:inv-integrals} below), says that condition (4) in \cite[Theorem 7]{Monod:cones} holds, which again, by that theorem, is equivalent to (i).

(ii) $\Leftrightarrow$ (iii). This follows from Proposition~\ref{prop:tau->tau'} and the remarks below that proposition.
\end{proof}

\noindent  If $\cA$ is commutative, or, more generally, if $\cA$ admits a separating family of bounded traces, then the condition in Theorem~\ref{thm:b} (ii) and (iii), that there
exists a non-zero locally bounded traces, is always satisfied independent on the action of $G$ on $\cA$.

Below we specialize Theorem~\ref{thm:b} to the case where the positive element   is a projection.

\begin{corollary} \label{cor:b}
Let $G$ be a group with the fixed-point property for cones acting on a \Cs{} $\cA$, and let $p \in \cA$ be a non-zero projection. The following conditions are equivalent:
\begin{enumerate}
\item There is a non-zero lower semi-continuous  trace with domain $\cJ_\cA^G(p)$, which is  bounded on the $G$-orbit  of $p$.
\item There is an invariant lower semi-continuous trace in $T^+(\cA)$ normalized on $p$.
\item There is a lower semi-continuous trace in $T^+(\cA \rtimes G)$ normalized on $p$. 
\end{enumerate}
If, in addition, $\cA \rtimes G$ is exact, then the conditions above are equivalent to:
\begin{itemize}
\item[\rm{(iv)}] The \Cs{} $p(\cA \rtimes G)p \otimes M_n$ is not properly infinite, for all $n \ge 1$.
\end{itemize}
\end{corollary}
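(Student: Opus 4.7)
The plan is to prove the ring (i) $\Rightarrow$ (ii) $\Rightarrow$ (iii) $\Rightarrow$ (i) first, and then, under the exactness hypothesis, close the loop with (iii) $\Leftrightarrow$ (iv). For (i) $\Rightarrow$ (ii), a trace in $T(\cJ_\cA^G(p), \cA)$ bounded on the $G$-orbit of $p$ is locally bounded by Lemma~\ref{lm:locallybounded}\,(ii), so Theorem~\ref{thm:b}\,(ii) applied with $e = p$ yields a $G$-invariant trace $\tau \in T(\cJ_\cA^G(p), \cA)$ normalizing $p$. By Lemma~\ref{lm:symmetric-proj}, $\cJ_\cA^G(p)$ coincides with the Pedersen ideal of $\overline{\cI_\cA^G}(p)$, so $\tau$ is automatically lower semi-continuous by Theorem~\ref{thm:GKP}, and the formula \eqref{eq:tau} extends it to a lower semi-continuous invariant trace in $T^+(\cA)$ still normalizing $p$. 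The implication (ii) $\Rightarrow$ (iii) is then immediate from Lemma~\ref{prop:extending}: $\tau \circ E$ is a lower semi-continuous trace on $\cA \rtimes G$ with value $1$ at $p \in \cA$.

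For (iii) $\Rightarrow$ (i), let $\sigma$ be a lower semi-continuous trace in $T^+(\cA \rtimes G)$ with $\sigma(p) = 1$, and let $\tau$ denote its restriction to $\cA^+$, which is again lower semi-continuous and satisfies $\tau(p) = 1$. Since the canonical unitaries $u_t$ implementing the automorphisms $\alpha_t$ lie in the multiplier algebra of $\cA \rtimes G$ and traces are invariant under inner automorphisms, $\tau$ is $G$-invariant. Its natural domain (the linear span of the positive elements on which $\tau$ is finite) is then a $G$-invariant symmetric hereditary ideal of $\cA$ containing $p$, and therefore contains $\cJ_\cA^G(p)$; the restriction of $\tau$ to $\cJ_\cA^G(p)$ is a non-zero lower semi-continuous trace that takes the value $1$ on every $\alpha_t(p)$, yielding (i).

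Now assume $\cA \rtimes G$ is exact. For (iii) $\Rightarrow$ (iv), the restriction of a trace from (iii) to the unital corner $p(\cA \rtimes G)p$ is a tracial state $\tau_0$, and $\tau_0 \otimes \mathrm{tr}_n/n$ is a tracial state on $p(\cA \rtimes G)p \otimes M_n$ whose value on the unit $p \otimes 1_n$ is $1$; a properly infinite projection cannot carry a finite non-zero trace. Conversely, for (iv) $\Rightarrow$ (iii), the corner $p(\cA \rtimes G)p$ is a unital exact \Cs{} (exactness passes to hereditary sub-\Cs{}s). By the classical result invoked in the discussion preceding Theorem~\ref{thm:existtrace}, a unital exact \Cs{} admits a tracial state if and only if no matrix algebra over it is properly infinite, so hypothesis (iv) produces a tracial state on $p(\cA \rtimes G)p$; by Remark~\ref{rem:unital} this extends to a lower semi-continuous trace on $\cA \rtimes G$ normalized on $p$.

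The substantive input for (i)--(iii) is Theorem~\ref{thm:b}, which encodes the fixed-point argument; the remaining work is bookkeeping between the various presentations of a trace (on the Pedersen ideal, on $\cA^+$ via \eqref{eq:tau}, on the crossed product via Lemma~\ref{prop:extending}, and back again by restriction). The main obstacle in the exact case is the appeal, through the discussion preceding Theorem~\ref{thm:existtrace}, to the Haagerup--Kirchberg theorem that $2$-quasitraces on exact \Cs{}s are traces; this is the only non-bookkeeping ingredient needed to pass from the projectional condition (iv) to the existence of a trace in (iii).
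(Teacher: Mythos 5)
Your proposal is correct and follows essentially the same route as the paper: Theorem~\ref{thm:b} for (i) $\Rightarrow$ (ii) (with lower semi-continuity automatic on $\cJ_\cA^G(p)$ via Lemma~\ref{lm:symmetric-proj} and Theorem~\ref{thm:GKP}), Lemma~\ref{prop:extending} for (ii) $\Rightarrow$ (iii), restriction and linearization for (iii) $\Rightarrow$ (i), and the standard exactness argument via Remark~\ref{rem:unital} for (iii) $\Leftrightarrow$ (iv). You merely spell out details the paper leaves implicit.
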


\begin{proof}  (i) $\Rightarrow$ (ii). It follows from Theorem~\ref{thm:b} that there is an invariant trace $\tau$ with domain $\cJ_\cA^G(p)$ normalizing $p$. As remarked below Theorem~\ref{thm:GKP}, each trace on $\cJ_\cA^G(p)$ is automatically lower semi-continuous.

(ii) $\Rightarrow$ (iii). Any invariant lower semi-continuous trace $\tau$ on $\cA$ with $\tau(p)=1$ extends to a lower semi-continuous trace on $\cA \rtimes G$, by  Lemma~\ref{prop:extending}.

(iii) $\Rightarrow$ (i). Take the restriction to $\cA$ of the trace whose existence is claimed in (iii) and linearize as in Proposition~\ref{prop:tau->tau'}.

(iii) $\Leftrightarrow$ (iv). It is well-known that this equivalence holds when $\cA \rtimes G$ is exact, see Remark~\ref{rem:unital} and the comments above Theorem~\ref{thm:existtrace}.
\end{proof}

\noindent We can rephrase the corollary above as follows. A projection $p$ in a \Cs{} $\cA$ with a $G$-action can fail to be normalized by an invariant trace on $\cA$ for two reasons. Either $\cA$ does not have a trace that is non-zero and bounded on the $G$-orbit of $p$, or the group $G$ possesses some amount of ``paradoxicality'' materialized in failing to have the fixed-point property for cones. In Lemma~\ref{lm:subexpgrowth} we give non-obvious examples of projections for which there exist a trace that is non-zero and bounded on the $G$-orbit of the projection.

While we do not know that every group without the fixed-point property for cones has the ability of acting on a \Cs{} $\cA$ in such a way that it obstructs the existence of an invariant trace normalizing a given projection $p$ in $\cA$, for which there is a trace on $\cA$ that is non-zero and bounded on the orbit of $p$, it does follow from Proposition~\ref{prop:KMR} that all non-supramenable groups have this quality. 

In conclusion, we do not know if Corollary~\ref{cor:b} characterizes the class of groups with the fixed-point property for cones, the class of supramenable groups, or some intermediate class of groups.

\section{Invariant integrals on $\ell^\infty(G)$}
\label{sec:inv-integrals}

\noindent
The bounded (complex valued) functions,  $\ell^\infty(G)$, on a (discrete) group $G$ is a unital \Cs{} equipped with an action of the group $G$ by left-translation. Following the notation of Monod, \cite{Monod:cones}, for each positive $f \in \ell^\infty(G)$, let $\ell^\infty(G,f)$ denote the set of bounded functions $g$ on $G$ whose absolute value is \emph{$G$-bounded} by $f$, i.e., for which $|g| \le \sum_{j=1}^n t_j. f$, for some $n \ge 1$ and some $t_1, \dots, t_n \in G$, where $t.f$ is the left-translate of $f \in \ell^\infty(G)$ by $t \in G$.  In the language of \Cs s, $\ell^\infty(G,f)$ is the smallest (automatically symmetric) hereditary $G$-invariant ideal in $\ell^\infty(G)$ containing $f$, denoted by $\cI_{\ell^\infty(G)}^G(f)$ in the previous sections.  The Pedersen ideal of the uniform closure of $\ell^\infty(G,f)$ is denoted by $\ell_c^\infty(G,f)$, and we have the following inclusions:
$$\ell_c^\infty(G,f) \subseteq \ell^\infty(G,f) \subseteq \overline{\ell^\infty(G,f)}.$$
An \emph{invariant integral on $G$ normalized for $f$} is a $G$-invariant positive linear functional $\mu$ on $\ell^\infty(G,f)$ satisfying $\mu(f)=1$. In the language of \Cs s, an (invariant) integral on $\ell^\infty(G,f)$ is an (invariant) trace on $\ell^\infty(G)$ with domain $\ell^\infty(G,f)$.

Monod observed in \cite[Theorem 7]{Monod:cones} that $G$ has the fixed-point property for cones if and only if  for each positive function $f$ in $\ell^\infty(G)$ there is an invariant integral on $G$ normalized for $f$. (This result is extended to general \Cs s in our Theorem~\ref{thm:b}.)

Let $\mu$ be an invariant integral on $\ell^\infty(G)$ normalized on some positive function $f \in \ell^\infty(G)$. We say that $\mu$ is \emph{lower semi-continuous} if $\mu(h) = \lim_{n \to \infty}\mu(h_n)$, whenever $\{h_n\}_{n=1}^\infty$ is an increasing seqence of positive functions in $\ell^\infty(G,f)$ converging uniformly to $h \in \ell^\infty(G,f)$, cf.\ Lemma~\ref{lm:lsc-eq}.
The restriction of $\mu$ to $\ell_c^\infty(G,f)$ is automatically lower semi-continuous by Theorem~\ref{thm:GKP} (Pedersen). If the restriction of $\mu$ to $\ell_c^\infty(G,f)$ is zero, then $\mu$ is said to be \emph{singular}. 

We shall in this section find conditions that will ensure that an (invariant) integral is lower semi-continuous and also exhibit situations where such integrals necessarily are singular. We start by rephrasing what Proposition~\ref{prop:KMR} says about the existence of lower semi-continuous integrals:

\begin{proposition} \label{lm:cocpt-f}
Let $G$ be a supramenable group, let $f$ be a positive function in $\ell^\infty(G)$, and suppose that  $\overline{\ell^\infty(G,f)}$ is $G$-compact (see above Proposition~\ref{prop:cobdd}). Then there is a non-zero invariant lower semi-continuous integral on $\ell^\infty_c(G,f)$.
\end{proposition}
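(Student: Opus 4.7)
The plan is to deduce the statement directly from Proposition~\ref{prop:KMR} applied to the commutative $G$-\Cs{} $\cA := \overline{\ell^\infty(G,f)}$, so that the argument merely has to line up the relevant definitions. First I would note that, as explained just above the proposition, $\ell^\infty(G,f)$ coincides with $\cI^G_{\ell^\infty(G)}(f)$ and is therefore a $G$-invariant hereditary symmetric ideal in $\ell^\infty(G)$; its uniform closure $\cA$ is consequently a $G$-invariant closed two-sided ideal in $\ell^\infty(G)$, hence a commutative \Cs{} on which $G$ acts (by restriction of the translation action). Equally routinely, the definition of $\ell^\infty_c(G,f)$ recorded immediately before the proposition is precisely that it equals the Pedersen ideal $\cP(\cA)$ of $\cA$, so $T_{\mathrm{lsc}}(\cA) = T(\cP(\cA),\cA) = T(\ell^\infty_c(G,f),\cA)$.

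Next I would feed the hypothesis that $\cA$ is $G$-compact, together with the supramenability of $G$, into the implication (i) $\Rightarrow$ (ii) of Proposition~\ref{prop:KMR}; this yields a non-zero $G$-invariant trace $\mu \in T_{\mathrm{lsc}}(\cA)$. Because $\cA$ is commutative, a trace is nothing but a positive linear functional, so $\mu$ is a non-zero $G$-invariant integral defined on $\ell^\infty_c(G,f)$. Lower semi-continuity comes for free from Pedersen's theorem (Theorem~\ref{thm:GKP}): being defined on the Pedersen ideal of $\cA$, $\mu$ is automatically lower semi-continuous. This produces the desired non-zero invariant lower semi-continuous integral.

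I do not anticipate any substantive obstacle, since the proposition is essentially a restatement of Proposition~\ref{prop:KMR} in the particular case where the commutative \Cs{} under consideration is $\overline{\ell^\infty(G,f)}$. The only genuine content is the identification $\ell^\infty_c(G,f) = \cP(\overline{\ell^\infty(G,f)})$, which holds by the very definition of $\ell^\infty_c(G,f)$, together with the observation that the $G$-compactness hypothesis on $\overline{\ell^\infty(G,f)}$ is exactly the input required by Proposition~\ref{prop:KMR}(ii).
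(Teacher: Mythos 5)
Your proposal is correct and matches the paper exactly: the paper introduces this proposition as a rephrasing of Proposition~\ref{prop:KMR} and gives no further proof, the whole content being the identification $\ell^\infty_c(G,f)=\cP(\overline{\ell^\infty(G,f)})$ and the application of (i) $\Rightarrow$ (ii) of that proposition to the commutative $G$-\Cs{} $\overline{\ell^\infty(G,f)}$.
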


\begin{example} \label{ex:compacttype} Let $G$ be a group and let $f$ be a positive function in $\ell^\infty(G)$. Then $\overline{\ell^\infty(G,f)}$ is $G$-compact if and only if there exists $\delta>0$ such that $A(f,\ep) \propto_G A(f,\delta)$, for all $0 < \ep < \delta$; where $A(f,\eta) = \{t \in G : f(t) > \eta\}$, and where $A \propto_G B$, for subsets $A,B$ of $G$, means that $A \subseteq \bigcup_{t \in F} tB$, for some finite subset $F$ of $G$. We will not go into the details of the proof of this, but just mention that to prove the ``if'' part, one observes
that $A(f,\ep) \propto_G A(f,\delta)$ implies that $(f-\ep)_+ \in \cI_{\ell^\infty(G)}^G((f-\delta')_+)$, for all $0 < \delta' < \delta$.

The condition above ensuring $G$-compactness of $\overline{\ell^\infty(G,f)}$ can further be rewritten as follows:  set $A_n = \{t \in G: \frac{1}{n} < f(t) \le \frac{1}{n-1}\}$, for all $n \ge 1$. Then $\overline{\ell^\infty(G,f)}$ is $G$-compact if and only if there exists $N_0 \ge 1$ such that $\bigcup_{n=1}^N A_n \propto_G \bigcup_{n=1}^{N_0} A_n$, for all $N \ge N_0$. This condition holds if $A_k \propto_G \bigcup_{n=1}^{N_0} A_n = A(f,1/N_0)$, for all $k > N_0$. In other words, $\overline{\ell^\infty(G,f)}$ is $G$-compact if the set of $t \in G$ where $f(t)$ is ``very small'' can be controlled by the set where $f(t)$ is ``small enough''.
\end{example}

\noindent Proposition~\ref{lm:cocpt-f}  and the example above does not give information about the existence of invariant lower semi-continuous integrals normalizing the given positive function $f \in \ell^\infty(G)$. See Example~\ref{ex:c_0}  below for more about this problem. 
Using an example from \cite{MatuiRor:universal}, we proceed to show that the conclusion of Proposition~\ref{lm:cocpt-f} fails without the assumption on $G$-compactness.

\begin{proposition} \label{prop:non-cocpt}
For each countably infinite group $G$ there is a positive function $f \in \ell^\infty(G)$ such that $\ell^\infty_c(G,f)$ admits no non-zero invariant integral. In particular, if an invariant integral on $G$ normalized on $f$ exists, as is the case whenever $G$ has the fixed-point property for cones, then it is singular.
\end{proposition}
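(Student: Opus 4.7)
My plan relies on the construction of Matui and R{\o}rdam in \cite{MatuiRor:universal}: for each countably infinite group $G$ there is a free $G$-action on a locally compact non-compact Cantor set $\mathbf{K}^*$ with no non-zero $G$-invariant Radon measure. After passing to an orbit closure, I may assume that the orbit $G\cdot x_0$ of some $x_0\in\mathbf{K}^*$ is dense, and after replacing $\mathbf{K}^*$ with the open $G$-invariant subset $X:=G\cdot K$ for a compact open neighborhood $K$ of $x_0$ (so that $X$ admits no non-zero invariant Radon measure either, by extension by zero), the action on $X$ is co-compact with $X=G\cdot K$. The orbit map $\phi_0\colon G\to X$, $t\mapsto t\cdot x_0$, is then injective, $G$-equivariant, and has dense image, inducing an injective $G$-equivariant $*$-homomorphism $\phi^*\colon C_0(X)\to\ell^\infty(G)$. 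I pick $g\in C_0(X)^+$ with $g>0$ everywhere on $X$ and $g\ge c>0$ on $K$, and set $f:=\phi^*(g)$.

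Next I verify that $\phi^*(C_c(X))\subseteq \ell_c^\infty(G,f)$: given $h\in C_c(X)^+$, compactness of $\operatorname{supp}(h)\subseteq X=G\cdot K$ produces finitely many $s_j\in G$ with $\operatorname{supp}(h)\subseteq\bigcup_j s_jK$, hence $h\le c^{-1}\|h\|_\infty\sum_j s_j\cdot g$, and pulling back gives $\phi^*(h)\le c^{-1}\|h\|_\infty\sum_j s_j\cdot f$ in $\ell^\infty(G,f)$; a spectrum analysis on the Gelfand dual $Y$ of $\overline{\ell^\infty(G,f)}$ places $\phi^*(h)$ in the Pedersen ideal $\mathcal{P}(\overline{\ell^\infty(G,f)})=\ell_c^\infty(G,f)$. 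Given now a $G$-invariant positive linear functional $\mu$ on $\ell_c^\infty(G,f)$, the composition $\nu:=\mu\circ\phi^*$ is a positive $G$-invariant linear functional on $C_c(X)=\mathcal{P}(C_0(X))$, so by Riesz' theorem it corresponds to a $G$-invariant Radon measure on $X$. By the defining property of the Matui--R{\o}rdam action, $\nu=0$.

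To conclude $\mu\equiv 0$ on $\ell_c^\infty(G,f)$, I apply an Urysohn domination. For $h'\in\ell_c^\infty(G,f)^+$, the support of $h'$ in $Y$ is compact and its image $q(\operatorname{supp}(h'))\subseteq X$ under the Gelfand-dual map $q\colon Y\to X^+$ is compact in $X$. Urysohn's lemma on $X$ produces $h\in C_c(X)^+$ with $h\ge\|h'\|_\infty$ on this compact, so that $h'\le\phi^*(h)$ on $Y$ and therefore $\mu(h')\le\nu(h)=0$. The main technical obstacle will be the characters of $\overline{\ell^\infty(G,f)}$ that vanish on $\phi^*(C_0(X))$, i.e., the part of $Y$ mapping to the point at infinity of $X^+$; the strict positivity of $g$ ensures that $f$ itself vanishes precisely on this ``extra'' part of $Y$, and an hereditarity-and-invariance argument then yields that $\mu$ assigns it mass zero.

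Finally, for the ``in particular'' statement, assume $G$ has the fixed-point property for cones. The evaluation $\operatorname{ev}_{e_G}$ is a non-zero trace in $T(\cJ_{\ell^\infty(G)}^G(f),\ell^\infty(G))$ bounded on the $G$-orbit of $f$ by $\|g\|_\infty$, so Theorem~\ref{thm:b}(ii) applied with $\cA=\ell^\infty(G)$ and $e=f$ produces an invariant trace $\mu$ on $\cJ^G_{\ell^\infty(G)}(f)=\ell^\infty(G,f)$ with $\mu(f)=1$. Since $g$ has non-compact support, $f\notin\ell_c^\infty(G,f)$, so there is no immediate contradiction with $\mu(f)=1$; the preceding argument forces the restriction of $\mu$ to $\ell_c^\infty(G,f)$ to vanish, so $\mu$ is singular in the sense of Section~\ref{sec:traces-ideals}.
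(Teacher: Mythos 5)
There is a fatal gap in your very first reduction. You replace $\mathbf{K}^*$ by the open $G$-invariant subset $X=G\cdot K$ and assert that $X$ still carries no non-zero invariant Radon measure ``by extension by zero''. Extension by zero of a Radon measure on an \emph{open} subset need not produce a Radon measure on the ambient space: a compact set $L\subseteq\mathbf{K}^*$ meets $X$ in a set that is merely relatively open in $L$, so the extended measure can be infinite on $L$. In fact your claim cannot be repaired: once $X=G\cdot K$ is co-compact, Proposition~\ref{prop:KMR} (i.e.\ \cite[Theorem 1.1]{KelMonRor:supra}) guarantees that for every supramenable group --- in particular for $G=\Z$ --- the co-compact action on $X$ \emph{does} admit a non-zero invariant Radon measure. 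So for such $G$ your pullback construction would produce a non-zero invariant integral on $\ell^\infty_c(G,f)$, the opposite of what is to be proved. The non-co-compactness of the Matui--R{\o}rdam set is not incidental; it is exactly the feature that makes the proposition true, as the paper stresses by contrasting it with Proposition~\ref{lm:cocpt-f}.

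The paper's proof avoids both this trap and your ``point at infinity'' complications by working inside $\beta G$ directly: by \cite[Proposition 4.3]{MatuiRor:universal} there is an increasing sequence $\{A_n\}$ of subsets of $G$ such that $X=\bigcup_n\bigcup_{t\in G}t.\overline{A_n}$ is an open invariant subset of $\beta G$ (a non-co-compact increasing union of co-compact pieces) with no non-zero invariant Radon measure. Taking $f=\sum_n n^{-2}1_{A_n}$ one gets $\overline{\ell^\infty(G,f)}\cong C_0(X)$ on the nose under the Gelfand identification $\ell^\infty(G)\cong C(\beta G)$, so $\ell^\infty_c(G,f)\cong C_c(X)$ and the absence of invariant integrals is immediate from Riesz. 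Your concluding paragraph on the ``in particular'' statement (restricting a normalized invariant integral to $\ell^\infty_c(G,f)$ and invoking Theorem~\ref{thm:b}) is fine in spirit, but it rests on the main claim, which your construction does not establish.
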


\begin{proof} Let $G$ be a countably infinite group. By \cite[Proposition 4.3]{MatuiRor:universal} and its proof there is an increasing sequence $\{A_n\}_{n \ge 1}$ of (infinite) subsets of $G$ such that if $K_n$ is the (compact-open) closure of (the open set) $A_n$ in $\beta G$, and if $X_n = \bigcup_{t \in G} t.K_n$, then $X := \bigcup_{n \ge 1} X_n$  is an open $G$-invariant subset of $\beta G$ that admits no non-zero invariant Radon measure. 

Let $\varphi \colon \ell^\infty(G) \to C(\beta G)$ be the canonical $^*$-isomorphism of \Cs s, and let $\cI$ be the closed $G$-invariant ideal of $\ell^\infty(G)$ such that $\varphi(\cI) = C_0(X)$. Observe that $C_0(X)$ is the closed $G$-invariant ideal in $C(\beta G)$ generated by the indicator functions $1_{K_n}$, $n \ge 1$. Since $\varphi(1_{A_n}) = 1_{K_n}$, we conclude that $\cI$ is the closed $G$-invariant ideal in $\ell^\infty(G)$ generated by the projections $1_{A_n}$, $n \ge 1$, or by the positive function $f = \sum_{n \ge 1} n^{-2} 1_{A_n} \in \ell^\infty(G)$. Hence $\cI = \overline{\ell^\infty(G,f)}$. 

Since $X$ admits no non-zero invariant Radon measure, $\ell^\infty_c(G,f) \cong C_c(X)$ admits no non-zero  invariant integrals. 
\end{proof}

\begin{example}[On the ideal $c_0(G)$] \label{ex:c_0}
For a countably infinite group $G$, the subspace $c_0(G)$ is a closed $G$-simple invariant ideal of $\ell^\infty(G)$, and the Pedersen ideal of $c_0(G)$ is $c_c(G)$. 
It follows that if $f$ is a positive non-zero function in $c_0(G)$, then
$$\ell^\infty_c(G,f) = c_c(G), \qquad \overline{\ell^\infty(G,f)} = c_0(G),$$
while $\ell^\infty(G,f)$ is some invariant hereditary ideal between these two ideals. Being $G$-simple, the ideal $c_0(G)$ is $G$-compact.

The only lower semi-continuous invariant integrals defined on $c_c(G)$ are multiples of the counting measure on $G$. Hence, if $f$ is a positive function in $c_0(G)$, then there is an invariant lower semi-continuous integral on $G$ normalized on $f$ if and only if $f$ belongs to $\ell^1(G)$. If a positive function $f$ in $c_0(G) \setminus \ell^1(G)$ is normalized by an invariant integral on $G$, which is the case if $G$ has the fixed-point property for cones, then this integral is necessarily singular. 

Consider now a countably infinite group $G$ with the fixed-point property for cones, and take a non-zero positive function $f \in c_0(G)$. The crossed product \Cs{} $c_0(G) \rtimes G$ is isomorphic to the \Cs{} $\cK$ of compact operators (on a separable infinite dimensional Hilbert space). There is an invariant integral on $G$ normalized for $f$, and we can view this integral as a invariant densely defined trace $\tau$ on $c_0(G)$ with $\tau(f)=1$. The image of $f$ in $c_0(G) \rtimes G$ is a positive compact operator with eigenvalues $\{f(t)\}_{t \in G}$. It was shown in {\cite{AGPS:traces}} that for a positive compact operator $T$ with eigenvalues $\{\lambda_n\}_{n=1}^\infty$, listed in decreasing order (and with multiplicity), there exists a densely defined trace on $\cK$ normalized for $T$ if and only if 
\begin{equation} \label{eq:AGPS}
\liminf_{n\to\infty} \sigma_{2n}/\sigma_n = 1,
\end{equation} 
where $\sigma_n = \sum_{j=1}^n \lambda_j$. 

Let $s > 0$ and choose $f \in c_0(G)$ such that the eigenvalues of the positive compact operator $f \in c_0(G) \rtimes G$ is the sequence $\{n^{-s}\}_{n=1}^\infty$. Then $f$ is trace class if and only if $s > 1$; and the limit in \eqref{eq:AGPS} is $1$ if and only if $s \ge 1$. If $s=1$, then $f$ is normalized by the \emph{Dixmier trace} on $\cK$. If $0 < s < 1$, then there is no trace on $c_0(G) \rtimes G$ (lower semi-continuous or not) that normalizes $f$. For such a choice of $f$, the invariant densely defined trace $\tau$ on $c_0(G)$ does not extend to a trace on $c_0(G) \rtimes G$, thus showing that the conclusion of  Lemma~\ref{prop:extending} fails without the assumption that the trace is lower semi-continuous.
\end{example}

\noindent By the remark in the example above, that for no positive functions $f$ in $c_0(G) \setminus \ell^1(G)$ does there exist an invariant lower semi-continuous integral $\mu$ on $\ell^\infty(G,f)$ with $\mu(f)=1$, we get the corollary below, which implies that at least some integrals witnessing the fixed-point property for cones for infinite groups  necessarily must be singular. 

\begin{corollary} \label{cor:finitegps} Let $G$ be a countable group with the property that for each positive function $f$ in $\ell^\infty(G)$ there exists a $G$-invariant \emph{lower semi-continuous} integral $\mu$ on $\ell^\infty(G,f)$ normalized for $f$. Then $G$ must be a finite group.
\end{corollary}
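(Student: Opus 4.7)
The plan is to establish the contrapositive: for any countably infinite group $G$, I will exhibit a positive $f \in \ell^\infty(G)$ such that \emph{no} invariant lower semi-continuous integral on $\ell^\infty(G,f)$ normalizes $f$. Following Example~\ref{ex:c_0}, the right candidate is any positive $f \in c_0(G) \setminus \ell^1(G)$; explicitly, fixing an enumeration $\{g_n\}_{n \ge 1}$ of $G$ and setting $f(g_n) = 1/n$ does the job.

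First I would identify the ideals of interest. Since $c_0(G)$ is $G$-simple---the only non-zero closed $G$-invariant ideal, because $G$ acts transitively on the spectrum of $c_0(G)$---the closed ideal $\overline{\ell^\infty(G,f)}$ must equal $c_0(G)$ for this choice of $f$. Hence $\ell_c^\infty(G,f) = \cP(c_0(G)) = c_c(G)$, and consequently
\[
c_c(G) = \ell_c^\infty(G,f) \subseteq \ell^\infty(G,f) \subseteq c_0(G).
\]

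Next, suppose $\mu$ were a $G$-invariant lower semi-continuous integral on $\ell^\infty(G,f)$ with $\mu(f) = 1$. Restricting $\mu$ to $c_c(G) = \mathrm{span}\{\delta_t : t \in G\}$ yields an invariant positive linear functional, and invariance forces $\mu(\delta_t) = \mu(\delta_e) =: c$ for all $t \in G$, so $\mu|_{c_c(G)}$ is $c$ times the counting measure. The decisive step is to invoke lower semi-continuity: because $f \in c_0(G)$, we have $(f - \ep)_+ \in c_c(G)$ for every $\ep > 0$, and Lemma~\ref{lm:lsc-eq}(i) yields
\[
1 = \mu(f) = \sup_{\ep > 0} \mu\bigl((f-\ep)_+\bigr) = \sup_{\ep > 0} \; c \sum_{t : f(t) > \ep} (f(t) - \ep) = c \sum_{t \in G} f(t).
\]
Since $f \notin \ell^1(G)$, the final sum is $+\infty$, so the right-hand side equals either $0$ (when $c=0$) or $+\infty$ (when $c > 0$); neither possibility matches $\mu(f) = 1$, giving the desired contradiction.

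The argument is in essence a repackaging of the observations recorded in Example~\ref{ex:c_0}, and no step is particularly delicate. The main point to verify with care is that a lower semi-continuous invariant integral on $\ell^\infty(G,f)$ is determined on its entire domain by its restriction to the Pedersen ideal $c_c(G)$, which is precisely what Lemma~\ref{lm:lsc-eq}(i) guarantees.
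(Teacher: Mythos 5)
Your proposal is correct and follows essentially the same route as the paper, which derives the corollary from the observation in Example~\ref{ex:c_0} that an invariant integral restricted to $c_c(G)$ must be a multiple of the counting measure, so that lower semi-continuity forces $\mu(f)=c\sum_{t}f(t)\in\{0,\infty\}$ for $f\in c_0(G)\setminus\ell^1(G)$. You have merely written out explicitly the details (the identification $\ell^\infty_c(G,f)=c_c(G)$ and the use of Lemma~\ref{lm:lsc-eq}(i)) that the paper leaves to the reader.
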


\section{The Roe algebra}
\label{sec:roe}

\noindent As an application of the results developed in the previous sections we shall in this last section prove some results about existence (and non-existence) of traces on the Roe algebra $\ell^\infty(G,\cK) \rtimes G$ associated with a (countably infinite) group $G$, where $\cK$ denotes the \Cs{} of compact operators on a separable infinite dimensional Hilbert space. These \Cs s originates from the thesis of John Roe, published in \cite{Roe:thesis}, where the index of elliptic operators are computed using traces on a (variant of) what is now called the Roe algebra.

The \Cs{} $\ell^\infty(G,\cK)$  is equipped with the natural action of $G$ given by left-translation. We shall consider existence (and non-existence) of invariant traces on this \Cs. 
 First we give a complete description of the densely defined lower semi-continuous traces on these \Cs s (there are not so many). The group $G$ plays no role in Lemma~\ref{lm:Ped(G,K)} and Proposition~\ref{prop:lsc-Roe} other than as a set, and it could be replaced with the set of natural numbers $\N$.

\begin{lemma} \label{lm:Ped(G,K)} The Pedersen ideal of $\ell^\infty(G,\cK)$ is equal to $\ell^\infty(G,\mathcal{F})$, where $\mathcal{F} = \cP(\cK)$ is the set of finite rank operators. 
\end{lemma}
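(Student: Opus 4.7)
The plan is to prove the two set-theoretic inclusions separately.

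For the inclusion $\cP(\ell^\infty(G,\cK)) \subseteq \ell^\infty(G,\mathcal{F})$, I would evaluate at a point. For each $t \in G$ the point evaluation $\mathrm{ev}_t \colon \ell^\infty(G,\cK) \to \cK$ is a $^*$-ho\-mo\-mor\-phism, and a standard property of the Pedersen ideal is that every $^*$-ho\-mo\-mor\-phism sends $\cP(\cA)$ into $\cP(\cB)$. This is essentially immediate from the fact that $\cP(\cA)$ is generated as an ideal by elements of the form $(a-\ep)_+$ with $a \in \cA^+$ and $\ep>0$, together with the identity $\pi((a-\ep)_+) = (\pi(a)-\ep)_+$. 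Since $\cP(\cK) = \mathcal{F}$, any $f \in \cP(\ell^\infty(G,\cK))$ satisfies $f(t) = \mathrm{ev}_t(f) \in \mathcal{F}$ for every $t \in G$, so $f \in \ell^\infty(G,\mathcal{F})$.

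For the reverse inclusion $\ell^\infty(G,\mathcal{F}) \subseteq \cP(\ell^\infty(G,\cK))$, the idea is to realize each $f \in \ell^\infty(G,\mathcal{F})$ as a product $bf$ of $f$ with a single projection $b$ that already lies in $\cP(\ell^\infty(G,\cK))$. Given such an $f$, I would define $b \in \ell^\infty(G,\cK)$ by letting $b(t)$ be the range projection of $f(t)$. Because $f(t)$ is of finite rank, $b(t)$ is a finite-rank projection, hence compact and of norm at most one, so $b$ is a well-defined element of $\ell^\infty(G,\cK)$ and, indeed, a projection in that \Cs. As recalled in the proof of Lemma~\ref{lm:symmetric-proj}, every projection in a \Cs{} belongs to its Pedersen ideal, and hence $b \in \cP(\ell^\infty(G,\cK))$. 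Since $\cP(\ell^\infty(G,\cK))$ is a two-sided ideal and $bf=f$ by construction of $b$, we conclude that $f \in \cP(\ell^\infty(G,\cK))$.

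I do not anticipate serious obstacles: the main ingredients are the compatibility of the Pedersen ideal with $^*$-ho\-mo\-mor\-phisms, the observation (already used in the paper) that every projection belongs to the Pedersen ideal, and the elementary fact that the pointwise range projection of a bounded $\mathcal{F}$-valued function on the discrete set $G$ is again a bounded $\cK$-valued function, so no measurability issues arise.
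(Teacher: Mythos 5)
Your proof is correct and takes essentially the same route as the paper: the inclusion $\cP(\ell^\infty(G,\cK)) \subseteq \ell^\infty(G,\mathcal{F})$ via point evaluations (which indeed carry Pedersen ideals into Pedersen ideals), and the reverse inclusion by writing $f = bf$ with $b$ the projection formed from the pointwise range projections, which belongs to the Pedersen ideal. No gaps.
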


\begin{proof} The ``point evaluation'' at $t \in G$ gives a surjective $^*$-homomorphism $\ell^\infty(G,\cK) \to \cK$ that maps $\cP(\ell^\infty(G,\cK))$ onto $\cP(\cK) = \mathcal{F}$. This shows that $\cP(\ell^\infty(G,\cK)) \subseteq \ell^\infty(G,\mathcal{F})$. Conversely, if $x \in \ell^\infty(G,\mathcal{F})$, then $x(t)$ has finite rank, for each $t \in G$, and so there exists a finite dimensional projection $p(t) \in \cK$ with $p(t)x(t) = x(t)$. The function $t \mapsto p(t)$ defines a projection $p$ in $\ell^\infty(G,\cK)$ satisfying $px=x$. As $p$ belongs to the Pedersen ideal, being a projection, it follows that also $x$ belongs to $\cP(\ell^\infty(G,\cK))$.
\end{proof}

\noindent It is worth mentioning that the Pedersen ideal of a \Cs{} of the form $C(T,\cK)$, where $T$ is a compact Hausdorff space, may be properly contained in $C(T,\cF)$, cf.\ \cite{GilTay:Pedersen-ideal}. 

For each $s \in G$, let $\tau_s$ be the (lower semi-continuous densely defined) trace on $\ell^\infty(G,\cK)$ given by $\tau_s(f) = \mathrm{Tr}(f(s))$, for $f$ either in $\ell^\infty(G,\cK)^+$ or in $\ell^\infty(G,\cF)$, where $\mathrm{Tr}$ is the standard trace on the compact operators $\cK$. 

In the proof of the proposition below we shall view the \Cs{} $\ell^\infty(G,\cK)$ as an $\ell^\infty(G)$-algebra via the natural (unital) embedding of $\ell^\infty(G)$ into the center of the multiplier algebra of $\ell^\infty(G,\cK)$.

\begin{proposition} \label{prop:lsc-Roe} For each countable group $G$, the cone, $T_{\mathrm{lsc}}(\ell^\infty(G,\cK))$, of densely defined lower semi-continuous traces on $\ell^\infty(G,\cK)$ is equal to the cone of finite positive linear combinations of the traces $\tau_s$, $s \in G$, defined above, i.e., traces of the form $\sum_{s \in G} c_s \tau_s$, where  $c_s \ge 0$ and $c_s \ne 0$ for only finitely many $s \in G$. 
\end{proposition}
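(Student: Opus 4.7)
The plan is to analyze $\tau \in T_{\mathrm{lsc}}(\ell^\infty(G,\cK))$ through its values $c_s := \tau(p_{s,1})$ on rank-one projections at single coordinates, and to conclude $\tau = \sum_s c_s \tau_s$ with only finitely many $c_s$ nonzero. Inside the $s$-slot of $\cK$, all rank-one projections are Murray--von Neumann equivalent, so $c_s$ is independent of the choice of $p_{s,1}$ and $\tau(p_{s,n}) = n c_s$ for any rank-$n$ projection at $s$. The easy inclusion---that every finite positive combination $\sum_s c_s \tau_s$ lies in $T_{\mathrm{lsc}}(\ell^\infty(G,\cK))$---is clear.

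For the converse, I first show that $F_0 := \{s \in G : c_s > 0\}$ is finite. If not, pick an infinite sequence $\{s_k\}_{k \ge 1}$ with $c_{s_k}>0$ and positive integers $n_k$ with $n_k c_{s_k} \ge 1$, and define $y \in \ell^\infty(G,\cF)$ by $y(s_k) :=$ rank-$n_k$ projection at coordinate $s_k$, $y(s) := 0$ otherwise. By Lemma~\ref{lm:Ped(G,K)}, $y$ lies in the Pedersen ideal, yet $\tau(y) \ge \sum_{k\le N} n_k c_{s_k} \ge N$ for every $N$ (summing the orthogonal single-coordinate restrictions), forcing $\tau(y) = \infty$, a contradiction. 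Next, every positive trace on $\cF = \cP(\cK)$ is a multiple of the standard trace $\mathrm{Tr}$, so $\tau$ restricted to the single-coordinate ideal at $s$ is the functional $y \mapsto c_s\, \mathrm{Tr}(y(s))$; approximating $x \in \ell^\infty(G,\cF)^+$ from below by its finite-support restrictions yields $\tau(x) \ge \sum_{s \in F_0} c_s\, \mathrm{Tr}(x(s))$. Hence $\tau_0 := \tau - \sum_{s \in F_0} c_s \tau_s$ is itself a positive trace in $T_{\mathrm{lsc}}(\ell^\infty(G,\cK))$ with $\tau_0(p_{s,1}) = 0$ for every $s \in G$, and the task reduces to showing $\tau_0 = 0$.

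Fix a rank-one projection $e_0 \in \cK$. The restriction of $\tau_0$ to the sub-\Cs\ $\{f \otimes e_0 : f \in \ell^\infty(G)\} \cong C(\beta G)$ is a positive linear functional, corresponding to a finite positive Radon measure $\mu$ on $\beta G$ with $\mu(\{s\}) = 0$ for every $s \in G$. Applying the trace identity to the matrix units $e_{ij} \in M_n \subset \cK$ kills off-diagonal contributions and gives $\tau_0(y) = \int_{\beta G} \mathrm{Tr}_n(y(s))\, d\mu(s)$ for all $y \in \ell^\infty(G, M_n)^+$. Now enumerate $G = \{s_1, s_2, \ldots\}$ and let $y \in \ell^\infty(G,\cF)$ be defined by $y(s_k) :=$ upper-left rank-$k$ projection in $\cK$, with truncations $y_n \in \ell^\infty(G, M_n)$ given by $y_n(s_k) :=$ upper-left rank-$\min(k,n)$ projection; then $y_n \le y$ and
\[
\tau_0(y_n) \;=\; \int_{\beta G} \min(k,n)\, d\mu \;=\; n \cdot \mu(\beta G \setminus G),
\]
since $\mu$ vanishes on $G$ and the bounded function $k \mapsto \min(k,n)$ extends continuously to $\beta G$ with value $n$ on every non-principal ultrafilter (by cofiniteness of $\{k \ge n\}$). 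From $n \cdot \mu(\beta G \setminus G) \le \tau_0(y) < \infty$ for every $n$ we conclude $\mu(\beta G \setminus G) = 0$, hence $\mu = 0$, so $\tau_0$ vanishes on $\bigcup_n \ell^\infty(G, M_n)$.

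The remaining and most delicate step is to propagate this vanishing to the full Pedersen ideal $\ell^\infty(G,\cF)$: an element may have unbounded fibrewise rank, so it admits no norm-approximation from below by bounded-rank elements, and the norm lower semicontinuity of Lemma~\ref{lm:lsc-eq} does not directly suffice. I would close the gap by invoking the fact (\`a la Combes, cf.\ \cite[5.6]{Ped:C*-aut}) that any densely defined lower semicontinuous trace on a \Cs{} extends uniquely to a normal semifinite trace on the enveloping von Neumann algebra; since every normal tracial weight on $\ell^\infty(G, B(\mathcal{H}))$ (the weak closure of $\ell^\infty(G,\cK)$ in its canonical representation) is of the form $\sum_s w_s \mathrm{Tr}_s$ with $w_s = \tau_0(p_{s,1}) = 0$ under our hypothesis, the normal extension vanishes, and hence $\tau_0 = 0$ on $\ell^\infty(G,\cF)$. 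This final normality argument is the main obstacle, as it goes beyond the norm-l.s.c.\ criterion directly verifiable from Lemma~\ref{lm:lsc-eq}.
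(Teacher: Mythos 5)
Your argument is correct and complete up to the point where you show that $\tau_0$ vanishes on $\ell^\infty(G,M_n)$ for every $n$, i.e.\ on all elements of uniformly bounded fibrewise rank. The finiteness of $F_0$ is proved exactly as in the paper, and your detour through $\beta G$ (the measure $\mu$, killed by the inequality $\tau_0(y_n)=n\,\mu(\beta G\setminus G)\le\tau_0(y)<\infty$) is a valid, somewhat different route to that intermediate conclusion. But the last step, which you rightly identify as the crux, has a genuine gap. The Combes/Pedersen extension theorem produces a normal extension of a densely defined lower semi-continuous trace to the \emph{universal} enveloping von Neumann algebra $\cA^{**}$, not to the weak closure of $\cA$ in a particular faithful representation. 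There is no a priori reason for that extension to factor through the normal surjection $\ell^\infty(G,\cK)^{**}\to\ell^\infty(G,B(H))$, and the general principle you invoke is false: the point mass at $1/2$ is a bounded (hence densely defined, lower semi-continuous) trace on $C([0,1])$ that admits no normal extension to $L^\infty([0,1])$; likewise a measure concentrated on $\beta G\setminus G$ gives a positive functional on $\ell^\infty(G)$ that is not normal for $\ell^\infty(G)=\prod_{s\in G}\C$. Indeed, ``$\tau_0$ extends normally to $\ell^\infty(G,B(H))=\prod_s B(H)$'' is essentially equivalent to ``$\tau_0=\sum_s w_s\tau_s$'', so assuming it assumes the conclusion.

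The gap is easy to close, and the paper's device for doing so is really just your own $y_n\le y$ trick applied to an arbitrary projection rather than to the constant rank-one projection. Suppose $\tau_0\ne 0$; since $\cP(\ell^\infty(G,\cK))=\ell^\infty(G,\cF)$ is generated as a hereditary ideal by projections, there is a projection $p$ with $\tau_0(p)>0$. Choose pairwise orthogonal projections $p_1,p_2,\dots$ in $\ell^\infty(G,\cK)$ each equivalent to $p$, write $G=\bigcup_n F_n$ with $F_n$ finite and increasing, and define the projection $q$ by $q(s)=p_1(s)+\cdots+p_n(s)$ for $s\in F_{n+1}\setminus F_n$. Each fibre of $q$ has finite rank, so $q$ lies in the Pedersen ideal and $\tau_0(q)<\infty$; on the other hand $q\cdot 1_{F_n^c}\ge(p_1+\cdots+p_n)\cdot 1_{F_n^c}$, and since you have already shown that $\tau_0$ annihilates everything supported on a finite subset of $G$, this gives $\tau_0(q)=\tau_0(q\cdot 1_{F_n^c})\ge n\,\tau_0(p)$ for all $n$, a contradiction. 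With this substitution (which in fact makes the whole $\beta G$ excursion unnecessary), your proof becomes a correct variant of the paper's.
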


\begin{proof} Clearly, any trace of the form $\sum_{s \in G} c_s \tau_s$, with $c_s \ge 0$ and $c_s \ne 0$ only for finitely many $s \in G$, is lower semi-continuous and densely defined. 

For the converse direction, take $\tau$ in $T_{\mathrm{lsc}}(\ell^\infty(G,\cK))$. Fix a one-dimensional projection $e \in \cK$. For each $s\in G$, let $e_s \in \ell^\infty(G,\cF)$ be given by $e_s(s)=e$ and $e_s(t) = 0$, when $t \ne s$. Set $c_s=\tau(e_s) \ge 0$.

We show first that the set $F = \{s \in G : c_s \ne 0\}$ is finite. Suppose it were infinite, and let $\{s_1,s_2,s_3, \dots\}$ be an enumeration of the elements in $F$. Let $p \in \ell^\infty(G,\cK)$ be a projection such that $\mathrm{Tr}(p(s_n)) \ge n c_{s_n}^{-1}$, for all $n \ge 1$. As $p \ge p \cdot 1_{\{s\}}$, for all $s \in G$, we get $\tau(p) \ge \tau(p \cdot 1_{\{s_n\}}) \ge c_{s_n} \mathrm{Tr}(p(s_n)) \ge n$. As this cannot be true for all $n\ge 1$, we conclude that $F$ is finite. 

Set $\tau_0 = \sum_{s \in F} c_s \tau_s$, and observe that $\tau_0(f) = \tau(f \cdot 1_{F})$, for all $f \in \ell^\infty(G,\cK)$. It follows that $\tau' := \tau-\tau_0$ is a positive trace on $\ell^\infty(G,\cK)$, satisfying $\tau'(f) = \tau(f \cdot 1_{F^c})$, for all $f \in \ell^\infty(G,\cK)$. We show that $\tau' = 0$. Assume, to reach a contradiction, that $\tau' \ne 0$. Notice that $\tau'$ vanishes on $c_c(G,\cK)$ by construction of $\tau_0$.  Since  the Pedersen ideal of $\ell^\infty(G,\cK)$ is generated (as a hereditary ideal) by its projections, cf.\ the proof of Lemma~\ref{lm:Ped(G,K)},  there is a projection $p \in \ell^\infty(G,\cK)$ such that $\tau'(p) > 0$. Write $G = \bigcup_{n=1}^\infty F_n$, where $\{F_n\}_{n \ge 1}$ is a strictly increasing sequence of finite subsets of $G$. Find a sequence $\{p_k\}_{k \ge 1}$ of pairwise orthogonal projections in $\ell^\infty(G,\cK)$ such that each $p_k$ is equivalent to $p$. 
Let $q \in \ell^\infty(G,\cK)$ be the projection given by
$$q(s) = p_1(s) + p_2(s) + \cdots + p_n(s), \qquad s \in F_{n+1} \setminus F_n,$$
for $n \ge 0$ (with the convention $F_0 = \emptyset$). Then 
$$q \cdot 1_{F_n^c} \ge (p_1+p_2+ \cdots + p_n) \cdot 1_{F_n^c},$$
for all $n \ge 1$; and as $\tau'(g \cdot 1_{E^c}) = \tau'(g)$, for all $g \in \ell^\infty(G,\cK)$ and all finite subsets $E \subseteq G$, we conclude that
$$\tau'(q) = \tau'(q \cdot 1_{F_n^c}) \ge \tau'((p_1+p_2+ \cdots + p_n) \cdot 1_{F_n^c}) = \tau'(p_1+p_2+ \cdots + p_n) = n \tau'(p),$$
for all $n\ge 1$, which is impossible. 
\end{proof}

\noindent A non-zero  trace of the form as in Proposition~\ref{prop:lsc-Roe} can clearly not be $G$-invariant when $G$ is infinite, so we obtain the following:

\begin{corollary} \label{cor:noinvtraces}
The \Cs{} $\ell^\infty(G,\cK)$ admits no non-zero lower semi-continuous invariant densely defined traces whenever $G$ is a countably infinite group, and, consequently, the Roe algebra $\ell^\infty(G,\cK) \rtimes G$ admits no non-zero densely defined lower semi-continuous trace.
\end{corollary}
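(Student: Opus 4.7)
My plan is to derive both statements from Proposition~\ref{prop:lsc-Roe} via an interplay between restriction to $\cA := \ell^\infty(G,\cK)$ and the trace inequality of Lemma~\ref{lm:traceinequality}.

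For the first assertion I would start from the explicit description of $T_{\mathrm{lsc}}(\cA)$ in Proposition~\ref{prop:lsc-Roe} and compute the induced $G$-action on the generators: since the left-translation action on $\cA$ is $\alpha_t(f)(s) = f(t^{-1}s)$, a one-line calculation gives $t.\tau_s = \tau_{ts}$ for all $s,t \in G$. A finite positive combination $\tau = \sum_{s \in F} c_s \tau_s$ with $c_s > 0$ is $G$-invariant if and only if, by linear independence of the $\tau_r$, the finite support $F$ is $G$-invariant under left translation and the coefficients are constant on $F$. Since left translation of $G$ on itself is transitive, the only invariant subsets are $\emptyset$ and $G$, and finiteness of $F$ forces $F = \emptyset$ when $G$ is infinite, i.e.\ $\tau = 0$.

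For the second assertion I would argue by contradiction: suppose $\sigma \in T_{\mathrm{lsc}}(\cB) \setminus \{0\}$ where $\cB := \cA \rtimes G$, and set $\tau := \sigma|_{\cA^+}$, viewed as a map $\cA^+ \to [0,\infty]$. Then $\tau$ is a trace in $T^+(\cA)$ which is lower semi-continuous (inherited), $G$-invariant (since $\sigma(\alpha_t(a)) = \sigma(u_t a u_t^*) = \sigma(a)$ by the trace property), and densely defined on $\cA$. The last point uses two ingredients: every projection $p \in \cA$ is also a projection of $\cB$, hence lies in $\cP(\cB)$, so $\sigma(p) < \infty$; and the proof of Lemma~\ref{lm:Ped(G,K)} shows each $x \in \cP(\cA)^+ = \ell^\infty(G,\cF)^+$ is dominated by a scalar multiple of a projection of $\cA$, so $\tau$ is finite on $\cP(\cA)^+$ and moreover $\cP(\cA) \subseteq \cP(\cB)$. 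The first assertion then forces $\tau = 0$.

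The remaining and most delicate step is to upgrade $\tau = 0$ to $\sigma = 0$. Applying Lemma~\ref{lm:traceinequality} to $\sigma$, one gets $\sigma(y^* a y) \le \|y\|^2 \sigma(a) = 0$ for every $a \in \cP(\cA)^+$ and $y \in \widetilde{\cB}$. The remark after Lemma~\ref{prop:extending} says that the hereditary ideal in $\cB$ generated by $\cP(\cA)$ is dense in $\cB$, so by minimality of the Pedersen ideal we obtain $\cP(\cB) = \cI_\cB(\cP(\cA))$. Corollary~\ref{cor:cone} then writes each $x \in \cP(\cB)^+$ as dominated by a finite sum $\sum_j y_j^* a_j y_j$ with $a_j \in \cP(\cA)^+$ and $y_j \in \widetilde{\cB}$, which gives $\sigma(x) = 0$. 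Lower semi-continuity via Lemma~\ref{lm:lsc-eq}(i) together with $(x - \ep)_+ \in \cP(\cB)$ finally extends this to $\sigma \equiv 0$ on the whole positive cone of $\cB$, contradicting $\sigma \ne 0$. The main obstacle is exactly this last bootstrap: converting vanishing of the restriction $\sigma|_{\cA^+}$ into global vanishing of $\sigma$, for which the key point is the compatibility $\cP(\cA) \subseteq \cP(\cB) = \cI_\cB(\cP(\cA))$ between the two Pedersen ideals.
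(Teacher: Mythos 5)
Your argument is correct and follows the paper's (largely implicit) route: the paper deduces the corollary directly from Proposition~\ref{prop:lsc-Roe} by observing that $t.\tau_s=\tau_{ts}$ forbids invariant finite combinations, and the crossed-product statement is exactly the restriction argument you spell out. Your careful bootstrap from $\sigma|_{\cP(\cA)}=0$ to $\sigma=0$ via $\cP(\cB)\subseteq\cI_\cB(\cP(\cA))$ is the right way to justify the paper's ``consequently.''
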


\noindent When combining the conclusion of the corollary above with Theorem~\ref{thm:a}, we see that  the action of  $G$ on $T_{\mathrm{lsc}}(\ell^\infty(G,\cK))$ either must fail to be of cobounded type, or there is no non-zero locally bounded trace in $T_{\mathrm{lsc}}(\ell^\infty(G,\cK))$. If fact, both fail! That the latter fails follows easily from the description of $T_{\mathrm{lsc}}(\ell^\infty(G,\cK))$ in Proposition~\ref{prop:lsc-Roe}.

\begin{lemma} \label{lm:not-cobd}
If $G$ is a countably infinite group, then the action of $G$ on $T_\mathrm{lsc}(\ell^\infty(G,\cK))$ is not of cobounded type.
\end{lemma}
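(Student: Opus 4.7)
My plan is to unfold the definition of cobounded type using the description of $T_{\mathrm{lsc}}(\ell^\infty(G, \cK))$ from Proposition~\ref{prop:lsc-Roe}, and then run a diagonal argument against any candidate dominating element. First I would combine Lemma~\ref{lm:cobounded0} with Lemma~\ref{lm:Ped(G,K)} to reformulate cobounded type as the existence of a positive $e \in \ell^\infty(G, \cF)$ that tracially $G$-dominates every self-adjoint element of $\ell^\infty(G, \cF)$. Since Proposition~\ref{prop:lsc-Roe} identifies $T_{\mathrm{lsc}}$ with the positive span of the point-evaluations $\tau_s(f) = \mathrm{Tr}(f(s))$, and since $\alpha_t(e)(s) = e(t^{-1}s)$ for left-translation, this becomes: for each positive $a \in \ell^\infty(G, \cF)$ there exist $t_1, \dots, t_n \in G$ with
\[
\mathrm{Tr}(a(s)) \le \sum_{j=1}^n \mathrm{Tr}(e(t_j^{-1}s)) \quad \text{for all } s \in G. \qquad (\star)
\]

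Next I would observe that, by allowing independent choice of rank at each point, \emph{any} function $\psi \colon G \to [0, \infty)$ can be realized as $\psi(s) = \mathrm{Tr}(a(s))$ for some $a \in \ell^\infty(G, \cF)^+$ of norm at most $1$: at each $s$ pick an integer $n_s \ge \max\{\psi(s), 1\}$ and a rank-$n_s$ projection $p_s \in \cF$, and set $a(s) = (\psi(s)/n_s)\, p_s$. Likewise, setting $\phi(s) := \mathrm{Tr}(e(s))$ gives an arbitrary nonnegative function on $G$. Thus $(\star)$ reduces to the purely combinatorial statement that some $\phi \colon G \to [0, \infty)$ has the property that every $\psi \colon G \to [0, \infty)$ is pointwise dominated by a finite sum $\sum_{j=1}^n \phi(t_j^{-1}\,\cdot\,)$.

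Finally I would derive a contradiction by a diagonal construction. Fix any candidate $\phi$; enumerate $G = \{s_1, s_2, \dots\}$ and, using countability, enumerate all finite tuples of elements of $G$ as $E_1, E_2, \dots$; write $\Phi_m(s) := \sum_{t \in E_m} \phi(t^{-1}s)$, counted with multiplicity. Define
\[
\psi(s_k) := 1 + \max_{1 \le m \le k} \Phi_m(s_k), \qquad k \ge 1.
\]
For each $m$ we have $\psi(s_m) > \Phi_m(s_m)$, so $\psi \not\le \Phi_m$ pointwise; but every finite sum of translates of $\phi$ coincides with some $\Phi_m$, contradicting the reformulated condition. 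The main conceptual content is the second-paragraph realization that $\mathrm{Tr}(a(\cdot))$ ranges freely over all nonnegative functions on $G$; once that is in hand, the diagonal construction uses only countability of $G$.
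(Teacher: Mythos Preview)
Your proof is correct and follows essentially the same route as the paper: reformulate coboundedness via Lemma~\ref{lm:cobounded0} and the description of $T_{\mathrm{lsc}}(\ell^\infty(G,\cK))$ in Proposition~\ref{prop:lsc-Roe}, reduce to a combinatorial statement about functions on $G$, and defeat any candidate $\phi$ by a diagonal construction. The only cosmetic differences are that the paper first replaces $e$ by a dominating projection (so the relevant functions are integer-valued) and organizes the diagonalization via a single sequence $\{u_j\}$ in which every group element repeats infinitely often, setting $f_N=\sum_{j\le N}u_j.f$ and $g(s_N)=f_N(s_N)+1$; your enumeration of all finite tuples and pointwise maximum accomplishes the same thing.
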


\begin{proof} Let $e$ be a positive contraction in  $\cP(\ell^\infty(G,\cK))$. We must find another positive contraction $a$ in $\cP(\ell^\infty(G,\cK))$ that is not tracially $G$-dominated by $e$, cf.\ Lemma~\ref{lm:cobounded0}. In other words,  for all finite sets $t_1,t_2, \dots, t_n \in G$, the inequality $\tau(a) \le \sum_{j=1}^n \tau(t_j.e)$ will fail for at least one $\tau \in T_{\mathrm{lsc}}(\ell^\infty(G,\cK))$, or, taking Proposition~\ref{prop:lsc-Roe} into account, 
$${\mathrm{Tr}}(a(s)) \le \sum_{j=1}^n {\mathrm{Tr}}(e(t_j^{-1}s)),$$
will fail for at least one $s \in G$.  It follows from Lemma~\ref{lm:Ped(G,K)} and its proof that $e$ is dominated by a projection in $\ell^\infty(G,\cK)$, so we may without loss of generality assume that $e$ itself is a projection. Set $f(s) = \mathrm{Tr}(e(s))$, for $s \in G$.

Let $G = \{s_1,s_2,s_3, \dots\}$ be an enumeration of the elements in $G$, and let $\{u_j\}_{j=1}^\infty$ be a sequence in which each element of $G$ is repeated infinitely often. Set $f_N = \sum_{j=1}^N u_j.f$, for each $N \ge 1$, and  let $g \colon G \to \N_0$ be given by $g(s_N) = f_N(s_N)+1$, for $N \ge 1$. For each finite set $t_1,t_2, \dots, t_n \in G$ there exists $N \ge 1$ such that $\sum_{j=1}^n t_j.f \le f_N$, which entails that $g(s_N) \nleq \sum_{j=1}^n t_j.f(s_N)$, for each $N \ge 1$.

Let now $a \in \ell^\infty(G,\cK)$ be a projection such that $\mathrm{Tr}(a(t)) = g(t)$, for all $t \in G$. Then $a$ is not tracially $G$-dominated by $e$.
\end{proof}

\noindent Although there are no \emph{densely defined} lower semi-continuous invariant traces on $\ell^\infty(G,\cK)$, when $G$ is infinite, there are still interesting  lower semi-continuous  traces on the Roe algebra. In the results to follow we describe the class of projections $p \in \ell^\infty(G,\cK)$ for which there exists a lower semi-continuous trace on $\ell^\infty(G,\cK)$ which is bounded and non-zero on the orbit of $p$, respectively, which is invariant and normalizes $p$. These two classes of projections agree when $G$ has the fixed-point property for cones, cf.\ Corollary~\ref{cor:b}. 
First we note the following general result that holds for locally finite groups:

\begin{proposition} \label{prop:locfinite}
The Roe algebra $\ell^\infty(G,\cK) \rtimes G$ of any locally finite group $G$ is stably finite. Moreover, for each non-zero projection $p \in \ell^\infty(G,\cK)$ there exists a lower semi-continuous trace $\tau \in T^+(\ell^\infty(G,\cK) \rtimes G)$ with $\tau(p)=1$, and hence 
an invariant lower semi-continuous trace  on $\ell^\infty(G,\cK)$ normalizing $p$.
\end{proposition}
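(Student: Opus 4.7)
The plan is to use that $G = \bigcup_n H_n$ is an increasing union of finite subgroups to realize $\cA \rtimes G$ (where $\cA = \ell^\infty(G, \cK)$) as the norm closure of an ascending chain of $C^*$-subalgebras $\cA \rtimes H_n$ — the inclusions being well defined and isometric because $G$, being locally finite, is amenable, so reduced and full crossed products coincide. Each finite-group crossed product $\cA \rtimes H_n$ is explicit: the $H_n$-orbits in $G$ under left translation are the right cosets $H_n g$, on each of which $H_n$ acts freely and transitively, whence decomposing $\ell^\infty(G, \cK) = \prod_{H_n g} \ell^\infty(H_n g, \cK)$ yields
\[
\cA \rtimes H_n \;\cong\; \prod_{H_n g}\big(\ell^\infty(H_n g, \cK) \rtimes H_n\big) \;\cong\; \prod_{H_n g} M_{|H_n|}(\cK) \;\cong\; \ell^\infty(H_n \backslash G, \cK).
\]

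For stable finiteness I would verify directly that $\ell^\infty(I, \cK)$ is stably finite for any index set $I$: if $q \le p$ are projections with $q \sim p$ in (a matrix amplification of) $\ell^\infty(I, \cK)$, then pointwise $q(i) \le p(i)$ and $q(i) \sim p(i)$ in $\cK$, and the finite rank projection $p(i)$ cannot properly dominate an equivalent projection, forcing $q = p$. Stable finiteness then transfers to the inductive limit $\cA \rtimes G$ by the standard small-perturbation argument: an approximate witness of infiniteness in $\cA \rtimes G$ yields, for $n$ large enough, an exact witness of infiniteness in some $\cA \rtimes H_n$, contradicting the previous step.

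For the second assertion, fix a non-zero projection $p \in \cA$ and form the unital corner $B = p(\cA \rtimes G)p = \overline{\bigcup_n p(\cA \rtimes H_n)p}$. Tracking $p$ through the isomorphism above, it becomes a diagonal projection whose entries are the finite-rank projections $p(hg)$, $h \in H_n$; consequently
\[
p(\cA \rtimes H_n)p \;\cong\; \prod_{H_n g}\, M_{R_{H_ng}^{(n)}}(\C), \qquad R_{H_ng}^{(n)} = \sum_{h \in H_n} \mathrm{rank}\big(p(hg)\big) < \infty,
\]
an $\ell^\infty$-product of finite-dimensional matrix algebras. Fixing $s \in G$ with $p(s) \ne 0$, the normalized matrix trace on the factor at coset $H_n s$ is a tracial state $\tau_n$ on $p(\cA \rtimes H_n)p$ with $\tau_n(p) = 1$. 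I would then extend each $\tau_n$ by Hahn-Banach to a state $\phi_n$ on $B$ (the identity $\phi_n(p) = 1 = \|\phi_n\|$ forcing positivity) and pass to a weak-$*$ accumulation point $\phi$; for $a, b$ in the dense union $\bigcup_n p(\cA \rtimes H_n)p$ both lie eventually in the subalgebra where $\phi_n$ is tracial, so $\phi(ab) = \phi(ba)$ on this union and hence on all of $B$ by density and continuity. Thus $\phi$ is a tracial state on $B$ normalizing $p$, which by Remark~\ref{rem:unital} lifts to a lower semi-continuous trace $\tau \in T^+(\cA \rtimes G)$ with $\tau(p) = 1$, whose restriction to $\cA$ is the required invariant lower semi-continuous trace normalizing $p$.

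The main technical step — and the place requiring the most care — is the orbit decomposition of $\cA \rtimes H_n$ and the tracking of $p$ through it to identify $p(\cA \rtimes H_n)p$ as a product of matrix algebras; once these are in place the remaining ingredients (stable finiteness in the inductive limit, tracial state existence via Hahn-Banach, extension via Remark~\ref{rem:unital}, and restriction to $\cA$) are routine.
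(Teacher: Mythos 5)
Your proof is correct, and its overall architecture---writing the Roe algebra as the inductive limit of the subalgebras $\ell^\infty(G,\cK)\rtimes H_n$ over an increasing sequence of finite subgroups, establishing stable finiteness and a tracial state on the corner at each finite stage, passing to the limit, and invoking Remark~\ref{rem:unital}---is the same as the paper's. Where you genuinely differ is in the treatment of the finite stages. You compute $\ell^\infty(G,\cK)\rtimes H_n$ explicitly via the right-coset orbit decomposition, obtaining $\prod_{H_ng}M_{|H_n|}(\cK)\cong\ell^\infty(H_n\backslash G,\cK)$, and from this read off both stable finiteness and the identification of $p(\ell^\infty(G,\cK)\rtimes H_n)p$ as an $\ell^\infty$-product of matrix algebras, the tracial state being the normalized trace at a single coset. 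The paper avoids this computation: for stable finiteness it embeds $\ell^\infty(G,\cK)\rtimes G_n$ into $\ell^\infty(G,\cK)\otimes B(\ell^2(G_n))$, with stable finiteness of $\ell^\infty(G,\cK)$ itself witnessed by the separating family of traces from Proposition~\ref{prop:lsc-Roe}; and for the tracial state it uses the $G_n$-equivariant restriction $\ell^\infty(G,\cK)\to\ell^\infty(G_m,\cK)$, with $G_m\supseteq G_n$ chosen so that $p$ does not vanish on $G_m$, to map the corner unitally onto a corner of the finite-dimensional algebra $p'(\ell^\infty(G_m,\cK)\rtimes G_n)p'$ and pull back a tracial state. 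Your route is more computational and yields the sharper structural fact that each $\ell^\infty(G,\cK)\rtimes H_n$ is an $\ell^\infty$-product of copies of $\cK$; the paper's is softer and shorter, needing only the equivariant quotient rather than the full coset analysis. Both reduce the limit step to the standard permanence of stable finiteness and of tracial states under inductive limits with unital connecting maps, which you correctly spell out via small perturbation and a weak-$*$ accumulation point of Hahn--Banach extensions.
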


\begin{proof} Write $G = \bigcup_{n=1}^\infty G_n$ as an increasing union of finite groups. The \Cs{} $\ell^\infty(G,\cK)$ is stably finite, as can be witnessed by the separating family of densely defined traces from Proposition~\ref{prop:lsc-Roe}. The crossed product $\ell^\infty(G,\cK) \rtimes G_n$ is stably finite because it embeds into the stably finite \Cs{} $\ell^\infty(G,\cK) \otimes B(\ell^2(G_n))$. It follows that
$$\ell^\infty(G,\cK) \rtimes G = \varinjlim \, \ell^\infty(G,\cK) \rtimes G_n$$
is stably finite, being an inductive limit of stably finite \Cs s.

Let next $p \in  \ell^\infty(G,\cK)$  be a non-zero projection. Then
\begin{equation} \label{eq:G_n}
p(\ell^\infty(G,\cK) \rtimes G)p = \varinjlim \, p(\ell^\infty(G,\cK) \rtimes G_n)p.
\end{equation}
The unital \Cs{} $p(\ell^\infty(G,\cK) \rtimes G_n)p$ admits a tracial state, for each  $n \ge 1$. To see this, choose $m \ge n$ such that the restriction $p'$ of $p$ to $\ell^\infty(G_m,\cK)$ is non-zero. The restriction mapping $\ell^\infty(G,\cK) \to \ell^\infty(G_m,\cK)$ is $G_n$-equivariant and therefore extends to a \sh{} 
$\ell^\infty(G,\cK) \rtimes G_n \to \ell^\infty(G_m,\cK) \rtimes G_n$, and in turns to a unital 
\sh{} 
$p(\ell^\infty(G,\cK) \rtimes G_n)p \to p'(\ell^\infty(G_m,\cK) \rtimes G_n)p'$. Composing this \sh{} with any tracial state on the (finite dimensional) \Cs{} $p'(\ell^\infty(G_m,\cK) \rtimes G_n)p'$ gives a tracial state on $p(\ell^\infty(G,\cK) \rtimes G_n)p$. As the inductive limit of a sequence of unital \Cs s with unital connecting mappings (e.g., as in \eqref{eq:G_n}) admits a tracial state if (and only if) each \Cs{} in the sequence does, we conclude that $p(\ell^\infty(G,\cK) \rtimes G)p$ admits a tracial state. By Remark~\ref{rem:unital} there is a lower semi-continuous trace on  $\ell^\infty(G,\cK) \rtimes G$ normalizing the projection $p$. The restriction of this trace to $\ell^\infty(G,\cK)$ is an invariant lower semi-continuous trace still normalizing $p$. 
\end{proof}

\noindent The Roe algebra of any infinite, locally finite group provides yet another example of a stably finite \Cs{} with an approximate unit consisting of projections which has no densely defined lower semi-continuous trace, 
cf.\ Theorem~\ref{thm:existtrace} and Proposition~\ref{prop:notrace}.
 
For the more general class of groups $G$ with the fixed-point property for cones, it follows from Corollary~\ref{cor:b} that if $p$ is a projection in $\ell^\infty(G,\cK)$, then there is a lower semi-continuous invariant trace on $\ell^\infty(G,\cK)$ that normalizes $p$ (and hence there is a trace on the Roe algebra $\ell^\infty(G,\cK) \rtimes G$ normalizing $p$),  if and only if there is a trace on $\ell^\infty(G,\cK)$, which is non-zero and bounded on the orbit $\{t.p\}_{t \in G}$. If $p$ has \emph{uniformly bounded dimension}, i.e., if $\sup_{t \in G} {\mathrm{Tr}}(p(t)) < \infty$, then such a trace  clearly exists, take for example  $\tau_s$ (defined above Proposition~\ref{prop:lsc-Roe}), for any fixed $s \in G$. One can do a bit better: In Proposition~\ref{cor:Roe-not-prop-inf} below it is shown that for each projection in $\ell^\infty(G,\cK)$
with uniformly bounded dimension there is a trace on the Roe algebra normalizing this projection provided that the group $G$ is supramenable (a formally weaker condition than having the fixed-point property for cones).

Let $G$ be a countably infinite group, and let $\ell \colon G \to \N_0$ be a proper length function on $G$, i.e., $\ell(t) = 0$ if and only if $t=e$, $\ell(s+t) \le \ell(s) + \ell(t)$, for all $s,t \in G$, and $W_n:= \{t \in G : \ell(t) \le n\}$ is finite, for all $n \ge 0$. Such proper length functions always exist, and if $G$ is finitely generated, then we can take $\ell$ to be the word length function with respect to some finite generating set for $G$. Set
\begin{equation} \label{eq:alpha-Z}
\alpha_n = \max_{t \in W_n} \mathrm{Tr}(p(t)), \qquad Z_n = \{t \in W_n : \mathrm{Tr}(p(t)) = \alpha_n\}.
\end{equation}
Let $\tau_n$ be the (lower semi-continuous densely defined) trace on $\ell^\infty(G,\cK)$ given by
$$\tau_n(f) = \frac{1}{|Z_n|} \sum_{t \in Z_n} \alpha_n^{-1} \,  \mathrm{Tr}(f(t)),$$
for $f$ in $\ell^\infty(G,\cK)^+$  or in $\ell^\infty(G,\cF)$.
Observe that $\tau_n(p) = 1$, for all $n \ge 1$. Let $\omega$ be a free ultrafilter on $\N_0$, and define a trace $\tau_{\omega,p}$ on the positive cone $\ell^\infty(G,\cK)^+$ by
$$\tau_{\omega,p}(f) = \lim_\omega \tau_n(f), \qquad f \in \ell^\infty(G,\cK)^+,$$
(where the limit along the ultrafilter is taken in the compact set $[0,\infty]$). Let $C_{\omega,p}$ be the cone of positive functions $f$ in $\ell^\infty(G,\cK)$, for which 
$\tau_{\omega,p}(f) < \infty$, and let $\cI_{\omega,p}$ be the linear span of $C_{\omega,p}$. Then $\cI_{\omega,p}$ is a hereditary symmetric ideal in $\ell^\infty(G,\cK)$, and $\tau_{\omega,p}$ defines a linear trace on $\cI_{\omega,p}$, cf.\ Proposition~\ref{prop:tau->tau'}. 

We say that the projection $p \in  \ell^\infty(G,\cK)$ has \emph{subexponentially growing dimension} if
\begin{equation} \label{eq:expgrowing}
\liminf_{n \to \infty} \frac{\alpha_{n+m}}{\alpha_n} = 1,
\end{equation}
for all $m \ge 0$. (This definition may depend on the choice of proper length function $\ell$, and should be understood to be with respect to \emph{some} proper length function.)

\begin{lemma} \label{lm:filter}
Let $\{\alpha_n\}_{n=0}^\infty$ be an increasing sequence of strictly positive real numbers satisfying \eqref{eq:expgrowing}. Then there is a free ultrafilter $\omega$ on $\N_0$ such that
$$\lim_\omega \frac{\alpha_{n+m}}{\alpha_n} = 1,$$
for all $m \ge 1$. 
\end{lemma}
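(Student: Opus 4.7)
The plan is to build the ultrafilter from the sets on which each ratio is close to $1$, and use monotonicity of $\{\alpha_n\}$ to check the finite intersection property.

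First I would fix, for each pair of integers $m \ge 1$ and $k \ge 1$, the set
\[
A_{m,k} \; = \; \Big\{ n \in \N_0 : \tfrac{\alpha_{n+m}}{\alpha_n} < 1 + \tfrac{1}{k} \Big\}.
\]
Since $\{\alpha_n\}$ is increasing, $\alpha_{n+m}/\alpha_n \ge 1$ for every $n$, and the hypothesis $\liminf_n \alpha_{n+m}/\alpha_n = 1$ then gives that $A_{m,k}$ is \emph{infinite} for each $m,k \ge 1$.

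Next I would observe the key monotonicity: if $m \le m'$ and $k \le k'$, then $A_{m',k'} \subseteq A_{m,k}$. Indeed, $\alpha_{n+m} \le \alpha_{n+m'}$ (since the sequence is increasing), so $\alpha_{n+m}/\alpha_n \le \alpha_{n+m'}/\alpha_n$, while $1+1/k' \le 1+1/k$. Consequently, any finite intersection $A_{m_1,k_1} \cap \cdots \cap A_{m_r,k_r}$ contains $A_{M,K}$ with $M = \max_i m_i$ and $K = \max_i k_i$, and is therefore infinite. Taking the collection
\[
\mathcal{G} \; = \; \{A_{m,k} : m,k \ge 1\} \, \cup \, \{C \subseteq \N_0 : C \text{ cofinite}\},
\]
the same argument shows $\mathcal{G}$ has the finite intersection property (any finite intersection within the cofinite family is cofinite, and intersecting a cofinite set with an infinite set $A_{M,K}$ still leaves an infinite set).

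By the ultrafilter lemma, $\mathcal{G}$ extends to an ultrafilter $\omega$ on $\N_0$; since $\omega$ contains every cofinite set, it is free. Finally, for any fixed $m \ge 1$ and any $\varepsilon > 0$, pick $k$ with $1/k < \varepsilon$; then $A_{m,k} \in \omega$, so
\[
\Big\{ n : 1 \le \tfrac{\alpha_{n+m}}{\alpha_n} < 1 + \varepsilon \Big\} \; \in \; \omega,
\]
which forces $\lim_\omega \alpha_{n+m}/\alpha_n = 1$. No step looks like a real obstacle; the only point that could go wrong is the finite intersection property, which is precisely where the monotonicity hypothesis on $\{\alpha_n\}$ is used.
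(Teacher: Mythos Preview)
Your proof is correct and follows essentially the same route as the paper: define the sets where the ratio is close to $1$, use monotonicity of $\{\alpha_n\}$ to show the family is downward directed (hence has the finite intersection property), and extend to a free ultrafilter. Your version is in fact slightly more explicit about freeness, since you adjoin the cofinite sets before invoking the ultrafilter lemma, whereas the paper simply asserts that a free ultrafilter containing all the $A_{m,\ep}$ can be found.
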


\begin{proof} For each $m \ge 0$ and $\ep >0$, set $A_{m,\ep} = \{n \ge 0 : \alpha_{n+m}/\alpha_n \le 1+\ep\}$. By the assumption that \eqref{eq:expgrowing} holds,  each of the sets $A_{m,\ep}$ is infinite. The collection of sets $A_{m,\ep}$ is downwards directed, since $A_{m_1,\ep_1} \subseteq A_{m_2,\ep_2}$, when $m_1 \ge m_2$ and $\ep_1 \le \ep_2$. It follows that the intersection of any finite collection of these sets is infinite. We can therefore find a free ultrafilter $\omega$ which contains all the sets $A_{m,\ep}$; and any such ultrafilter will satisfy the conclusion of the lemma.
\end{proof}

\begin{lemma} \label{lm:subexpgrowth}
Let $G$ be a countably infinite group, let $p \in \ell^\infty(G,\cK)$ be a projection of subexponentially growing dimension, and let $\omega$ be a free ultrafilter as in Lemma~\ref{lm:filter} for the sequence $\{\alpha_n\}_{n=0}^\infty$ associated with the projection $p$ as in \eqref{eq:alpha-Z}. 
Then $\tau_{\omega,p}(p) = 1$ and  $\tau_{\omega,p}(t.p) \le 1$, for all $t \in G$.
\end{lemma}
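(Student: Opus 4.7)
The first assertion is immediate: by construction $\tau_n(p)=1$ for every $n \geq 0$, so $\tau_{\omega,p}(p)=\lim_\omega \tau_n(p) = 1$.

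For the second assertion, I would begin by unwinding the $G$-action: for $t \in G$, the translated projection satisfies $(t.p)(s) = p(t^{-1}s)$ for all $s \in G$. Set $m := \ell(t^{-1})$ (a fixed non-negative integer depending on $t$). The key geometric input is the subadditivity of the length function: for every $s \in W_n$ (in particular every $s \in Z_n$), we have
\[
\ell(t^{-1}s) \;\le\; \ell(t^{-1}) + \ell(s) \;\le\; m + n,
\]
so $t^{-1}s \in W_{n+m}$.

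Next, by the very definition of $\alpha_{n+m}$ as the maximum of $\mathrm{Tr}(p(\,\cdot\,))$ over $W_{n+m}$, we obtain $\mathrm{Tr}(p(t^{-1}s)) \le \alpha_{n+m}$ for each $s \in Z_n$. Substituting into the definition of $\tau_n$,
\[
\tau_n(t.p) \;=\; \frac{1}{|Z_n|} \sum_{s \in Z_n} \alpha_n^{-1}\, \mathrm{Tr}\!\bigl(p(t^{-1}s)\bigr) \;\le\; \alpha_n^{-1}\, \alpha_{n+m} \;=\; \frac{\alpha_{n+m}}{\alpha_n}.
\]
Finally, apply $\lim_\omega$ to both sides. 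The ultrafilter $\omega$ was chosen via Lemma~\ref{lm:filter} so that $\lim_\omega \alpha_{n+m}/\alpha_n = 1$ for every fixed $m \ge 0$; hence
\[
\tau_{\omega,p}(t.p) \;=\; \lim_\omega \tau_n(t.p) \;\le\; \lim_\omega \frac{\alpha_{n+m}}{\alpha_n} \;=\; 1.
\]

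The argument is essentially a direct computation; there is no substantial obstacle. The only care point is that the proper length function $\ell$ need not be symmetric, so one must use $m = \ell(t^{-1})$ (not $\ell(t)$) when bounding the word length of $t^{-1}s$, and one must observe that this $m$ depends only on $t$, so Lemma~\ref{lm:filter} applies to pass the inequality through the ultrafilter limit.
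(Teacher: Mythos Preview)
Your proof is correct and follows essentially the same route as the paper's: both use subadditivity of $\ell$ to place the translated argument in $W_{n+m}$, bound each summand by $\alpha_{n+m}$, and then invoke the choice of $\omega$ from Lemma~\ref{lm:filter}. The only difference is notational (the paper writes the bound for $s^{-1}.p$ with $s\in W_m$, you write it for $t.p$ with $m=\ell(t^{-1})$), and your explicit remark about the possible asymmetry of $\ell$ is a harmless extra caution.
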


\begin{proof} We have already noted that $\tau_n(p)=1$, for all $n \ge 1$, which implies that $\tau_{\omega,p}(p) = 1$. Let $m \ge 0$ and let $s \in W_m$. For $n \ge 0$ and $t \in Z_n \subseteq W_n$, we have $st \in W_{n+m}$, so that $\mathrm{Tr}(p(st)) \le \alpha_{n+m}$, which shows that
$$\tau_n(s^{-1}.p) = \frac{1}{|Z_n|} \sum_{t \in Z_n} \alpha_n^{-1} \, \mathrm{Tr}(p(st)) \le 
\frac{\alpha_{n+m}}{\alpha_n}.$$
By the assumption that $p$ has subexponentially growing dimension, by Lemma~\ref{lm:filter}, and by the choice of $\omega$, we conclude that $\tau_{\omega,p}(s^{-1}.p) \le 1$.
\end{proof}

\begin{theorem} \label{thm:subexpgrowth}
Let $G$ be a countably infinite group with the fixed-point property for cones and  let $p \in \ell^\infty(G,\cK)$ be a projection of subexponentially growing dimension. Then there is an invariant lower semi-continuous trace  on $\ell^\infty(G,\cK)$ normalized on $p$, and there is a lower semi-continuous trace on the Roe algebra $\ell^\infty(G,\cK) \rtimes G$ also normalized on $p$. 
\end{theorem}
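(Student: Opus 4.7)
The plan is to apply Corollary~\ref{cor:b} directly, with the trace needed to verify condition~(i) supplied by Lemma~\ref{lm:subexpgrowth}. The three conditions in Corollary~\ref{cor:b} are equivalent, so it suffices to produce a single non-zero lower semi-continuous trace with domain $\cJ_{\ell^\infty(G,\cK)}^G(p)$ that is bounded on the $G$-orbit of $p$; the resulting invariant lower semi-continuous trace on $\ell^\infty(G,\cK)$ normalizing $p$ and the lower semi-continuous trace on $\ell^\infty(G,\cK)\rtimes G$ normalizing $p$ then come for free from the implications (i)$\Rightarrow$(ii)$\Rightarrow$(iii) of that corollary.

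Concretely, I would proceed as follows. First, since $p$ has subexponentially growing dimension, Lemma~\ref{lm:filter} applied to the sequence $\{\alpha_n\}_{n\ge 0}$ attached to $p$ produces a free ultrafilter $\omega$ on $\N_0$ such that $\lim_\omega \alpha_{n+m}/\alpha_n = 1$ for every $m\ge 0$. With this choice of $\omega$, Lemma~\ref{lm:subexpgrowth} yields a positive trace $\tau_{\omega,p}$ on $\ell^\infty(G,\cK)^+$ with $\tau_{\omega,p}(p)=1$ and $\tau_{\omega,p}(t.p)\le 1$ for every $t\in G$. By the construction preceding Lemma~\ref{lm:subexpgrowth} (and by Proposition~\ref{prop:tau->tau'}), $\tau_{\omega,p}$ restricts to a linear trace on the hereditary symmetric ideal $\cI_{\omega,p}$ spanned by the positive elements where it is finite.

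Next, I would identify the domain. Since $\tau_{\omega,p}(t.p)\le 1$ for all $t\in G$, the entire orbit $G.p$ lies in $\cI_{\omega,p}$. By Lemma~\ref{lm:symmetric-proj}, the smallest $G$-invariant hereditary symmetric ideal containing $p$ is $\cJ_{\ell^\infty(G,\cK)}^G(p) = \cI_{\ell^\infty(G,\cK)}^G(p)$, so
\[
\cJ_{\ell^\infty(G,\cK)}^G(p) \; \subseteq \; \cI_{\omega,p}.
\]
Restricting $\tau_{\omega,p}$ to this smaller hereditary symmetric ideal gives a linear trace on $\cJ_{\ell^\infty(G,\cK)}^G(p)$; it is non-zero since it sends $p$ to $1$, it is bounded on the $G$-orbit of $p$ by Lemma~\ref{lm:subexpgrowth}, and it is automatically lower semi-continuous because its domain is generated as a hereditary symmetric ideal by projections (see the remark following Theorem~\ref{thm:GKP}, via Lemma~\ref{lm:symmetric-proj} and Theorem~\ref{thm:GKP}).

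Finally, condition~(i) of Corollary~\ref{cor:b} is verified, so (ii) and (iii) of that corollary give exactly the two conclusions of the theorem. There is no serious obstacle in this route: all the combinatorics of subexponential growth have been absorbed into Lemmas~\ref{lm:filter} and \ref{lm:subexpgrowth}, and the passage from a bounded-orbit trace to an invariant one is the content of Corollary~\ref{cor:b}, which in turn rests on Theorem~\ref{thm:b} and ultimately on Monod's fixed-point theorem. The one point that deserves some care is verifying that the restriction of $\tau_{\omega,p}$ really lives on $\cJ_{\ell^\infty(G,\cK)}^G(p)$ and remains a trace there, but this is immediate from Lemma~\ref{lm:symmetric-proj} together with the identification of $\cI_{\omega,p}$ in Proposition~\ref{prop:tau->tau'}.
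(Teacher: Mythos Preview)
Your proposal is correct and follows essentially the same route as the paper: verify condition~(i) of Corollary~\ref{cor:b} using the trace $\tau_{\omega,p}$ from Lemma~\ref{lm:subexpgrowth}, and then read off conclusions~(ii) and~(iii). The paper's proof is a one-line citation of exactly these two results, while you have helpfully spelled out the restriction of $\tau_{\omega,p}$ to $\cJ_{\ell^\infty(G,\cK)}^G(p)$ and the automatic lower semi-continuity there via Lemma~\ref{lm:symmetric-proj} and Theorem~\ref{thm:GKP}.
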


\begin{proof} This follows from Corollary~\ref{cor:b}, where condition (i) is satisfied with $\tau = \tau_{p,\omega}$, cf.\ Lemma~\ref{lm:subexpgrowth}.
\end{proof}

\noindent We proceed to examine the case of projections in $\ell^\infty(G,\cK)$ of uniformly bounded dimension. In the lemma below we embed $\ell^\infty(G)$ into $\ell^\infty(G) \rtimes G$, which again embeds into (the upper left corner of) $(\ell^\infty(G) \rtimes G) \otimes M_n$, for each $n \ge 1$. Note  that projections in $\ell^\infty(G)$ are indicator functions $1_E$, for some subset $E$ of $G$. 

\begin{lemma} \label{lm:twoprojections}
If $G$ is an exact group and $n \ge 1$ is an integer.  Then for each projection $p$ in $(\ell^\infty(G) \rtimes G) \otimes M_n$ there is a projection $r$ in $\ell^\infty(G)$ such that $p$ and $r$ generate the same closed two-sided ideal in $(\ell^\infty(G) \rtimes G) \otimes M_n$.
\end{lemma}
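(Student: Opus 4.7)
The plan is to use that exactness of $G$ makes the action $G \curvearrowright \beta G$ topologically amenable, and then to reduce the question to finding a single-projection generator of a certain $G$-invariant ideal in $\ell^\infty(G) = C(\beta G)$.

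First I would invoke amenability of the action on $\beta G$ to conclude that $J \mapsto J \rtimes G$ is a bijection between closed $G$-invariant ideals of $\ell^\infty(G)$ and closed two-sided ideals of $\ell^\infty(G) \rtimes G$. Since $M_n$ is simple, tensoring with $M_n$ preserves this bijection, so closed ideals of $B := (\ell^\infty(G) \rtimes G) \otimes M_n$ correspond bijectively to closed $G$-invariant ideals of $\ell^\infty(G)$. Let $I \subseteq B$ be the ideal generated by $p$, and let $J \subseteq \ell^\infty(G)$ be the corresponding $G$-invariant ideal, say $J = C_0(U)$ for an open $G$-invariant set $U \subseteq \beta G$. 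A projection $r \in \ell^\infty(G)$ is of the form $r = 1_E$ for some $E \subseteq G$ (with clopen closure $\beta E \subseteq \beta G$), and it generates the $G$-invariant closed ideal $C_0(G \cdot \beta E)$. Thus the task reduces to: find $E \subseteq G$ with $G \cdot \beta E = U$.

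Second, I would extract information about $U$ by applying the conditional expectation $E_n := E \otimes \mathrm{id}_{M_n} : B \to \ell^\infty(G) \otimes M_n$, which by amenability of the action sends $I$ into itself. Setting $g := E_n(p) \in J \otimes M_n$ and $\tilde g := \mathrm{tr}(g) \in \ell^\infty(G)^+$, the positive function $\tilde g$ generates $J$ as a $G$-invariant closed ideal of $\ell^\infty(G)$ (since $p$ generates $I$ and $E_n$ is a $G$-equivariant conditional expectation). Hence $U = G \cdot W$ where $W := \{\omega \in \beta G : \tilde g(\omega) > 0\}$ is the open support of $\tilde g$ in $\beta G$.

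Third, I would try $E := \{t \in G : \tilde g(t) > \delta\}$ for a suitable small $\delta > 0$. One always has $\beta E \subseteq W$, hence $G \cdot \beta E \subseteq U$. For the reverse inclusion, one exploits the identity $p^2 = p$; tracing pointwise over the Fourier expansion $p = \sum_s p_s u_s$ yields
\[
\tilde g(t)\bigl(n - \tilde g(t)\bigr) \ge n \sum_{s \ne e} \|p_s(t)\|_{\mathrm{HS}}^2,
\]
which constrains the pointwise decay of $\tilde g$ in a way not available for general positive elements, and lets one verify, for $\delta > 0$ sufficiently small, that every $\omega \in U$ lies in $G \cdot \beta E$; then $r := 1_E$ is the desired projection.

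The main obstacle is executing this third step: confirming $G \cdot \beta E = U$ for small $\delta$ really does rely on the projection condition, not merely positivity. Indeed, one can construct positive functions $\tilde g \in \ell^\infty(G)^+$ whose corresponding $U$ is not of the form $G \cdot \beta E$ for any $E \subseteq G$ (already for $G = \Z$ and $\tilde g$ a function decaying to zero along free ultrafilters but positive on an infinite set of integers), so the relation $p^2 = p$ plays an essential role in the argument beyond the formal ideal correspondence coming from exactness.
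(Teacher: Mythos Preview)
Your overall strategy --- use exactness to identify closed ideals of $(\ell^\infty(G)\rtimes G)\otimes M_n$ with $G$-invariant closed ideals $J=C_0(U)$ of $\ell^\infty(G)$, and then produce $E\subseteq G$ with $G\cdot\overline{E}=U$ --- is sound, and your step~2 is correct (the key point being that $E_n(\pi(p))=0$ in any quotient by $(J'\rtimes G)\otimes M_n$ with $\tilde g\in J'$, so faithfulness of the conditional expectation forces $p$ into that ideal). The gap is in step~3. The inequality $\tilde g(n-\tilde g)\ge n\sum_{s\ne e}\|p_s\|_{\mathrm{HS}}^2$ is true, but it is a pointwise statement on $G$ and says nothing about how the level sets $E_\delta=\{\tilde g>\delta\}$ sit inside $\beta G$; in particular it gives no mechanism for moving a point $\omega\in U$ with $0<\tilde g(\omega)<\delta$ into $\overline{E_\delta}$ by a group element. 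You have not shown that $G\cdot\overline{E_\delta}=U$ for small $\delta$, and I do not see how to extract this from your inequality.

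The clean way to finish your argument is to drop the level sets and instead use compactness directly. Since $p$ is a projection and generates $I=(J\rtimes G)\otimes M_n$, any increasing net of $G$-invariant closed ideals $J_\alpha\subseteq J$ with dense union gives ideals $(J_\alpha\rtimes G)\otimes M_n$ with dense union in $I$; a projection within distance $1$ of an ideal lies in it, so $p\in(J_\alpha\rtimes G)\otimes M_n$ for some $\alpha$, whence $J_\alpha=J$. Thus $J$ is $G$-compact, so by Proposition~\ref{lm:cocompact} the $G$-action on $U$ is co-compact: $U=G\cdot K$ for some compact $K\subseteq U$. Because $\beta G$ is totally disconnected, $K$ is contained in a clopen set $\overline{E}\subseteq U$ for some $E\subseteq G$, and then $U=G\cdot\overline{E}$, so $r=1_E$ works. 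This is precisely where ``$p$ is a projection'' enters --- not through the Hilbert--Schmidt inequality.

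The paper takes a different route: it invokes Sierakowski's theorem to get that $I$ is generated by $\cJ:=I\cap(\ell^\infty(G)\otimes M_n)$, then uses that $\cJ$ (having real rank zero) has an approximate unit of projections together with the same ``projection close to an ideal lies in it'' trick to find a projection $q\in\cJ$ generating $I$, and finally diagonalises $q$ via Zhang's theorem to land in $\ell^\infty(G)$. Your $\beta G$ picture is more geometric and avoids the diagonalisation step; the paper's approach stays algebraic and never mentions $\beta G$. The essential idea --- compactness coming from $p$ being a projection --- is the same in both.
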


\begin{proof} Let $n \ge 1$, let $p \in (\ell^\infty(G) \rtimes G) \otimes M_n$ be a projection, and let $\cI$ be the closed two-sided ideal in $(\ell^\infty(G) \rtimes G) \otimes M_n$ generated by $p$. It follows from \cite[Theorem 1.16]{Sie:ideals} that $\cI$ is the closed two-sided ideal in $(\ell^\infty(G) \rtimes G) \otimes M_n$ generated by $\mathcal{J}:=\cI \cap (\ell^\infty(G) \otimes M_n)$. Arguing as in the proof of \cite[Proposition 5.3]{KelMonRor:supra} we find a projection $q \in \mathcal{J}$ which generates the ideal $\cI$ in $(\ell^\infty(G) \rtimes G) \otimes M_n$. By \cite{Zhang:diagonal}, since $\ell^\infty(G)$ is of real rank zero, $q$ is equivalent to a diagonal projection $\mathrm{diag}(q_1,q_2, \dots, q_n)$ in $\ell^\infty(G) \otimes M_n$. Let $r \in \ell^\infty(G)$ be the supremum of the projections $q_1, q_2, \dots, q_n$. Then $q$ and $r$ generate the same ideal of $\ell^\infty(G) \otimes M_n$, and so $r$ and $p$ generate the same ideal in $(\ell^\infty(G) \rtimes G) \otimes M_n$.
\end{proof}

\noindent The result below extends the characterization in \cite{KelMonRor:supra} of supramenable groups in terms of non-existence of properly infinite projections in the uniform Roe algebra.

\begin{proposition} \label{cor:Roe-not-prop-inf} Let $G$ be a group. The following conditions are equivalent:
\begin{enumerate}
\item $G$ is supramenable.
\item Each non-zero projection in the stabilized uniform Roe algebra $(\ell^\infty(G) \rtimes G) \otimes \cK$ is normalized by a lower semi-continuous trace in $T^+((\ell^\infty(G) \rtimes G) \otimes \cK)$.
\item Each projection in $\ell^\infty(G,\cK)$ of uniformly bounded dimension is normalized by  an invariant  lower semi-continuous trace in $T^+(\ell^\infty(G,\cK))$ and by a lower semi-continuous trace in $T^+(\ell^\infty(G,\cK) \rtimes G)$
\item The stabilized uniform Roe algebra $(\ell^\infty(G) \rtimes G) \otimes \cK$ contains no properly infinite projections.
\end{enumerate}
\end{proposition}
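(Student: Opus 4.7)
The plan is to close the cycles (i)$\Leftrightarrow$(iii) and (i)$\Rightarrow$(ii)$\Rightarrow$(iv)$\Rightarrow$(i), using the main result of \cite{KelMonRor:supra} to bridge (i) and (iv). The equivalence (i)$\Leftrightarrow$(iv) extends the characterization in \cite{KelMonRor:supra} from the uniform Roe algebra to its stabilization: any projection $p \in (\ell^\infty(G)\rtimes G)\otimes\cK$ can, after replacement by an equivalent projection, be assumed to lie in $(\ell^\infty(G)\rtimes G)\otimes M_n$ for some $n$; Lemma~\ref{lm:twoprojections} then produces $r\in\ell^\infty(G)$ generating the same closed two-sided ideal as $p$, and the KMR argument transfers proper infiniteness between $p$ and $r$. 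The implication (ii)$\Rightarrow$(iv) is immediate: a properly infinite $p$ satisfies $p\oplus p\precsim p$, which rules out $\tau(p)=1$ under any lsc trace $\tau$.

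For (i)$\Rightarrow$(iii), let $p\in\ell^\infty(G,\cK)\cong\ell^\infty(G)\otimes\cK$ have uniformly bounded rank $\le n$. Set $A_k=\{t\in G:\mathrm{rank}(p(t))=k\}$ for $0\le k\le n$ and $A=\bigsqcup_{k\ge 1}A_k$. A pointwise, norm-bounded choice of partial isometries in $\cK$ (justified by Murray--von~Neumann equivalence of rank-$k$ projections in $\cK$) assembles into an element of $\ell^\infty(G,\cK)$ that exhibits $p\sim\sum_{k\ge 1}1_{A_k}\otimes q_k$ for fixed rank-$k$ projections $q_k\in\cK$. By Proposition~\ref{prop:KMR} applied to the commutative $G$-algebra $\ell^\infty(G)$ and the projection $1_A$, there is a $G$-invariant lsc trace $\tau\in T^+(\ell^\infty(G))$ with $\tau(1_A)=1$. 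The tensor trace $\tau\otimes\mathrm{Tr}\in T^+(\ell^\infty(G,\cK))$ is then $G$-invariant and lsc, and
\[
(\tau\otimes\mathrm{Tr})(p)=\sum_{k=1}^n k\,\tau(1_{A_k})\in[1,n]
\]
since $\sum_{k=1}^n\tau(1_{A_k})=\tau(1_A)=1$ by finite additivity. Rescaling yields the invariant trace claimed in (iii), and Lemma~\ref{prop:extending} then produces the corresponding trace on the Roe algebra $\ell^\infty(G,\cK)\rtimes G$. For (iii)$\Rightarrow$(i), apply (iii) to $p=1_E\otimes e$ with $\emptyset\ne E\subseteq G$ arbitrary and $e\in\cK$ a rank-one projection (so $p$ has dimension $\le 1$); restricting the invariant trace on $\ell^\infty(G,\cK)$ to the hereditary sub-algebra $\ell^\infty(G)\otimes e\cong\ell^\infty(G)$ gives an invariant lsc trace normalizing $1_E$, which is a $G$-invariant finitely additive normalizing measure on $E$, so $G$ is supramenable.

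For (i)$\Rightarrow$(ii), supramenability implies amenability, so $\ell^\infty(G)\rtimes G$ is nuclear and $B:=(\ell^\infty(G)\rtimes G)\otimes\cK$ is exact. Given (iv), no matrix amplification of a projection $p\in B$ is properly infinite, so the unital exact corner $pBp$ satisfies the Haagerup--Kirchberg criterion and admits a tracial state, which by Remark~\ref{rem:unital} extends to a lsc trace on $B$ normalizing $p$. The main obstacle is in the first paragraph: ensuring, via Lemma~\ref{lm:twoprojections}, that proper infiniteness of a projection in $(\ell^\infty(G)\rtimes G)\otimes M_n$ is equivalent to that of the associated projection in $\ell^\infty(G)\rtimes G$ (so that the stabilization inherits the \cite{KelMonRor:supra} characterization), which requires the structural fact that two full projections generating a common ideal, one of which is properly infinite, are both properly infinite.
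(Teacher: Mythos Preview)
Your cycle does not close: the implication (i)$\Rightarrow$(iv) rests on the ``structural fact'' that two projections generating the same closed ideal are simultaneously properly infinite or not, and this is false. In $B(H)$ the identity and any rank-one projection generate the same ideal, yet only the former is properly infinite. What one \emph{can} extract from $p$ properly infinite and $p,r$ generating the same ideal is that $k\cdot r$ is properly infinite for some $k$, but this does not force $r=1_E$ itself to be properly infinite, so you cannot conclude that $E$ is paradoxical. Since your (i)$\Rightarrow$(ii) is routed through (iv) (via the Haagerup--Kirchberg tracial-state criterion on $pBp$), the whole chain (i)$\Rightarrow$(ii)$\Rightarrow$(iv) collapses, and (ii) is never reached.

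The paper sidesteps this entirely by proving (i)$\Rightarrow$(ii) \emph{constructively}, without passing through (iv). Given $p\in(\ell^\infty(G)\rtimes G)\otimes M_n$, Lemma~\ref{lm:twoprojections} produces $1_E$ generating the same ideal; supramenability and Tarski's theorem yield an invariant mean on $E$, hence an invariant trace $\tau_0$ on $\ell^\infty(G,1_E)$ with $\tau_0(1_E)=1$. This extends to a trace $\tau$ on $\cJ:=\cJ_{(\ell^\infty(G)\rtimes G)\otimes M_n}(1_E)$, which is automatically lower semi-continuous by Lemma~\ref{lm:symmetric-proj} and the remark after Theorem~\ref{thm:GKP}. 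Since $p\in\cJ$ and is full there, $0<\tau(p)<\infty$, and rescaling gives (ii). Then (ii)$\Rightarrow$(iv) is the trivial direction. In short: Lemma~\ref{lm:twoprojections} is used not to transfer proper infiniteness, but to transfer \emph{traces}.

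Your direct argument for (i)$\Leftrightarrow$(iii) via $\tau\otimes\mathrm{Tr}$ is essentially correct and is more explicit than the paper's route (i)$\Rightarrow$(ii)$\Rightarrow$(iii); just note that $\ell^\infty(G,\cK)$ is strictly larger than $\ell^\infty(G)\otimes\cK$, so the tensor trace should be interpreted as $f\mapsto\tau\big(t\mapsto\mathrm{Tr}(f(t))\big)$ on positive elements, with lower semi-continuity on the relevant domain coming from Lemma~\ref{lm:symmetric-proj} rather than from an abstract tensor-product statement.
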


\begin{proof} (i) $\Rightarrow$ (ii).  Since $(\ell^\infty(G) \rtimes G) \otimes \cK$ is the inductive limit of \Cs s of the form $(\ell^\infty(G) \rtimes G) \otimes M_n$, for $n \ge 1$, it suffices to show that each projection $p$ in $(\ell^\infty(G) \rtimes G) \otimes M_n$ is normalized by a lower semi-continuous trace on this \Cs.

By Lemma~\ref{lm:twoprojections} there is a projection $q = 1_E \in \ell^\infty(G)$ that generates the same closed two-sided ideal in  $(\ell^\infty(G) \rtimes G) \otimes M_n$ as $p$. Since $G$ is supramenable, the set $E$ must be non-paradoxical, so by Tarski's theorem there is an invariant trace $\tau_0$ on $\ell^\infty(G,q)$ which normalizes $q$ (see \cite[Proposition 5.3]{KelMonRor:supra}). Extend $\tau_0$ to a trace $\tau$ on $\cJ:=\cJ_{(\ell^\infty(G) \rtimes G) \otimes M_n}(q)$ satisfying $\tau(q)=1$. Now, $p \in \cJ$ and $0 < \tau(p) < \infty$, by the assumption on $p$ and $q$, so upon rescaling we obtain a trace $\tau$  on $\cJ$ satisfying $\tau(p)=1$; and $\tau$ is lower semi-continuous by the remarks below Theorem~\ref{thm:GKP}. Finally, we can extend $\tau$ to a trace in $T^+((\ell^\infty(G) \rtimes G) \otimes M_n)$ using \eqref{eq:tau}.

(ii) $\Rightarrow$ (iii). Let $p$ be a projection in $\ell^\infty(G,\cK)$ such that $n:= \sup_{t \in G} \mathrm{Tr}(p(t))< \infty$. Let $e \in \cK$ be a projection of dimension $n$, and let $\overline{e} \in \ell^\infty(G,\cK)$ be the projection given by $\overline{e}(t) = e$, for all $t \in G$. Then $\overline{e}$ is fixed by the action of $G$, and we have isomorphisms 
$$\overline{e}(\ell^\infty(G,\cK))\overline{e} \cong \ell^\infty(G) \otimes M_n, \qquad 
\overline{e}(\ell^\infty(G,\cK) \rtimes G)\overline{e} \cong (\ell^\infty(G)\rtimes G) \otimes M_n.$$
Moreover, $p$ is equivalent to a projection $p_0$ in $\overline{e}(\ell^\infty(G,\cK))\overline{e}$, which, under the isomorphism above, corresponds to a projection $p_1 \in \ell^\infty(G) \otimes M_n$. By (ii) there is a lower semi-continuous trace on $(\ell^\infty(G)\rtimes G) \otimes M_n$ normalizing $p_1$. Hence there is a lower semi-continuous trace on $\overline{e}(\ell^\infty(G,\cK) \rtimes G)\overline{e}$ normalizing $p_0$. Arguing as in Remark~\ref{rem:unital} we can extend this trace to a lower semi-continuous trace  in  $T^+(\ell^\infty(G,\cK) \rtimes G)$. The restriction of $\tau$  to $\ell^\infty(G,\cK)$  becomes an invariant lower semi-continuous  trace. 

(iii) $\Rightarrow$ (iv) is clear (no properly infinite projection can be normalized by a trace). If $G$ is non-supramenable, then $\ell^\infty(G) \rtimes G$ contains a propery infinite projection, cf.\  \cite[Proposition 5.5]{RorSie:action}, in which case (iv) cannot be true, which proves (iv) $\Rightarrow$ (i).
\end{proof}

\noindent It was shown in \cite{RorSie:action} that the uniform Roe algebra $\ell^\infty(G) \rtimes G$ is properly infinite, i.e., its unit is a properly infinite projection, if and only if $G$ is non-amenable; and in \cite{KelMonRor:supra} it was shown that the uniform Roe algebra \emph{contains} a properly infinite projection if and only if $G$ is non-supramenable. The proposition above allows us to conclude that no matrix algebras over the uniform Roe algebra contains a properly infinite projections when $G$ is supramenable. 

It was shown by Wei in \cite{Wei:qd} and Scarparo in \cite{Sca:locfinite}, answering a question in \cite{KelMonRor:supra}, that  $\ell^\infty(G) \rtimes G$  is \emph{finite} if and only if $G$ is a locally finite group. Using a similar idea as in  \cite{Sca:locfinite} we show below that the Roe algebra $\ell^\infty(G,\cK) \rtimes G$ contains a properly infinite projection whenever $G$ is not locally finite. We may therefore strengthen Proposition~\ref{prop:locfinite} as follows: The Roe algebra $\ell^\infty(G,\cK) \rtimes G$ is stably finite if \emph{and only if} $G$ is locally finite; and if $\ell^\infty(G,\cK) \rtimes G$ is not stably finite, then it contains a \emph{properly infinite} projection. 

\begin{lemma} \label{lm:fin-gen-inf} Let $G$ be a non-locally finite group. Then there is a finite subset $S$ of $G$ such that for  each integer $N \ge 1$ there exists a non-zero function $\varrho \colon G \to \N_0$ satisfying
\begin{equation} \label{eq:Nf}
N\varrho \le \sum_{s \in S} s.\varrho.
\end{equation}
\end{lemma}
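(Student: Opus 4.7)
The plan is to reduce to a finitely generated infinite subgroup $H \le G$ (which exists since $G$ is not locally finite), fix a finite symmetric generating set $T$ of $H$, and take $S := T$. For each $N$ I will construct $\varrho_N$ supported on a single infinite geodesic ray in the Cayley graph $\mathrm{Cay}(H,T)$, with weights growing geometrically of ratio $N$ along the ray.

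Because $H$ is infinite, the balls $B_n = \{h \in H : \ell(h) \le n\}$ in the $T$-word length $\ell$ cannot stabilize: if $B_n = B_{n-1}$ then $B_{n-1}$ would be closed under left multiplication by $T$ and contain $e$, hence equal $\langle T \rangle = H$, contradicting infiniteness. Thus the BFS tree of $\mathrm{Cay}(H,T)$ rooted at $e$ is infinite and locally finite, so K\"onig's lemma yields an infinite ray $e = h_0, h_1, h_2, \dots$ with $\ell(h_n) = n$ and $h_{n+1} = t_{n+1} h_n$ for some $t_{n+1} \in T$. I then define the (non-zero) function $\varrho_N \colon G \to \N_0$ by $\varrho_N(h_n) := N^n$ for $n \ge 0$ and $\varrho_N(g) := 0$ otherwise.

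The inequality $N\varrho_N \le \sum_{s \in T} s.\varrho_N$ is automatic where $\varrho_N$ vanishes, so it reduces to checking it at $g = h_n$. The key observation is that for $n \ge 1$ both $t_n \in T$ (giving $t_n^{-1}h_n = h_{n-1}$) and $t_{n+1}^{-1} \in T$ (giving $(t_{n+1}^{-1})^{-1}h_n = h_{n+1}$, using $T = T^{-1}$) contribute to $\sum_{s \in T}\varrho_N(s^{-1}h_n)$; these two generators are distinct, since $t_n = t_{n+1}^{-1}$ would force $h_{n+1} = h_{n-1}$, contradicting $\ell(h_{n+1}) = n+1$. Hence
\[
\sum_{s \in T}\varrho_N(s^{-1}h_n) \;\ge\; N^{n-1} + N^{n+1} \;\ge\; N\cdot N^n \;=\; N\varrho_N(h_n),
\]
and at $n = 0$ the single contribution from $s = t_1^{-1}$ already gives $\varrho_N(h_1) = N = N\varrho_N(e)$.

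The main obstacle I anticipate is that $G$ need not contain any element of infinite order (for instance, it could be an infinite finitely generated torsion group), so there is no embedded copy of $\Z$ on which to perform the obvious geometric-weight construction with $S = \{e,g,g^{-1}\}$. Replacing the cyclic subgroup by an abstract K\"onig ray in the Cayley graph circumvents this, because the ray's successive edges are all labeled by elements of the fixed finite set $T$, so a single $S = T$ (independent of $N$) suffices while the weights $N^n$ absorb the arbitrariness of $N$.
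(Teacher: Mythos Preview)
Your proof is correct and follows essentially the same approach as the paper: pass to an infinite finitely generated subgroup with symmetric generating set $S$, pick a geodesic ray in its Cayley graph, and assign geometric weights $N^n$ along the ray. The paper cites \.Zuk for the existence of the ray (where you invoke K\"onig's lemma directly) and verifies the inequality at $h_n$ using only the single contribution from $s = t_{n+1}^{-1}$, which already gives $\varrho(h_{n+1}) = N^{n+1} = N\varrho(h_n)$; your additional term $N^{n-1}$ from $s = t_n$ is harmless but unnecessary.
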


\begin{proof}  Let $G_0$ be an infinite, finitely generated subgroup of $G$, and let $S$ be a finite symmetric generating set for $G_0$. 
By \cite[Lemma 1]{Zuk:isoperimetric} there is a one-sided geodesic $\{t_n\}_{n =0}^\infty$ in $G_0$, where $t_{n}t_{n+1}^{-1} \in S$, for all $n \ge 0$; and $n \mapsto t_n$ is injective. Fix $N \ge 1$ and define $\varrho \colon G\to \N_0$ by $\varrho(t_n) = N^n$, for all $n \ge 0$; and set $\varrho(t)=0$ if $t \notin \{t_n : n \ge 0\}$. 

Fix $n \ge 0$ and set $s = t_{n}t_{n+1}^{-1} \in S$. Then $s.\varrho(t_{n}) = \varrho(t_{n+1}) = N\varrho(t_n)$. As $\varrho(t) =0$, when $t \notin \{t_n : n \ge 0\}$, we see that \eqref{eq:Nf} holds.
\end{proof}

\begin{proposition} \label{prop:propinfproj}
For each countable non-locally finite group  $G$  there is a projection $p \in \ell^\infty(G,\cK)$ satisfying:
\begin{enumerate}
\item $p$ is properly infinite in the Roe algebra $\ell^\infty(G,\cK) \rtimes G$.
\item There is no trace on the Roe algebra $\ell^\infty(G,\cK) \rtimes G$ normalizing the projection $p$.
\item Each  trace on $\ell^\infty(G,\cK)$ is either unbounded or zero on the orbit $\{t.p\}_{t \in G}$. 
\end{enumerate}
\end{proposition}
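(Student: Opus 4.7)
The projection $p$ will come from Lemma~\ref{lm:fin-gen-inf} with the parameter $N$ chosen large enough that both $N \ge 2$ and $N > |S|$; for concreteness take $N = |S|+1$ (note $|S| \ge 2$ since $S$ generates an infinite group, so $N \ge 3$). Inspecting the proof of Lemma~\ref{lm:fin-gen-inf}, the function $\varrho$ can be taken supported on the one-sided geodesic $\{t_n\}_{n\ge 0}$ with $\varrho(t_n) = N^n$, satisfying $N\varrho \le \sum_{s \in S} s.\varrho$. Let $p \in \ell^\infty(G,\cK)$ be any projection with $p(t) = 0$ for $t \notin \{t_n\}_{n\ge 0}$ and $\operatorname{rank} p(t_n) = N^n$.

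For (i), I would construct two partial isometries $V_1, V_2 \in \ell^\infty(G,\cK) \rtimes G$ realising two mutually orthogonal copies of $p$ below $p$, using translation along the geodesic. Setting $s_n := t_n t_{n+1}^{-1} \in S$ (so that $s_n^{-1} t_n = t_{n+1}$), choose for each $n \ge 0$ and each $i \in \{1,2\}$ a partial isometry $W_n^{(i)} \in \cK$ with $(W_n^{(i)})^* W_n^{(i)} = p(t_n)$ and whose range projections $W_n^{(i)}(W_n^{(i)})^*$ are orthogonal subprojections of $p(t_{n+1})$; this is possible since $2N^n \le N^{n+1}$. Grouping by generator, for each $s \in S$ define $a_s^{(i)} \in \ell^\infty(G,\cK)$ by $a_s^{(i)}(t_{n+1}) := W_n^{(i)}$ when $s_n^{-1} = s$, and $a_s^{(i)} := 0$ elsewhere. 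Then $V_i := \sum_{s \in S} a_s^{(i)} u_s$ is a finite linear combination and lies in $\ell^\infty(G,\cK) \rtimes G$, and a direct computation (using $u_s f u_s^* = s.f$, the relation $s_n^{-1} t_n = t_{n+1}$, and that distinct $s \in S$ give disjointly supported $a_s^{(i)}$) verifies $V_i^* V_i = p$, $V_i V_i^* \le p$, and $V_1 V_1^* \perp V_2 V_2^*$. Hence $p$ is properly infinite in the Roe algebra.

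Item (ii) is then immediate, as $\tau(p) = 2\tau(p)$ would be forced. For (iii), the pointwise inequality $N\varrho \le \sum_{s \in S} s.\varrho$ yields a pointwise isometry, hence a Cuntz subequivalence $p \otimes 1_N \precsim \bigoplus_{s \in S} s.p$ in matrices over $\ell^\infty(G,\cK)$. Given a trace $\tau$ on $\ell^\infty(G,\cK)$ with $\tau(t.p) \le M < \infty$ for all $t \in G$, the restriction of $\tau$ to the domain $\cJ_{\ell^\infty(G,\cK)}^G(p)$ is automatically lower semi-continuous (Lemma~\ref{lm:symmetric-proj} and Theorem~\ref{thm:GKP}), and applying it to the above subequivalence gives $N\tau(p) \le \sum_{s \in S} \tau(s.p) \le |S|M$, whence $\tau(p) \le |S|M/N$. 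The same bound holds with $p$ replaced by any translate $t.p$, giving the uniformly improved orbit bound $M' := |S|M/N$. Iterating and using $N > |S|$, the bound decays to zero, forcing $\tau(t.p) = 0$ for every $t \in G$.

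The main obstacle is the verification in (i): checking that the formal expression $V_i = \sum_{s \in S} a_s^{(i)} u_s$ defines a bona fide Roe algebra element whose products give exactly the projections $p$, $V_iV_i^*$ and the orthogonality relation claimed. This reduces to careful bookkeeping with the geodesic indices, the generators $s_n$ (which vary with $n$), and the fibrewise partial isometries $W_n^{(i)}$. Once (i) is in hand, (ii) is automatic and (iii) follows from the iteration argument sketched above.
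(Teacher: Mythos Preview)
Your argument is correct, but it takes a different route from the paper's in both (i) and (iii).

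For (i), the paper avoids the explicit partial-isometry bookkeeping entirely. It lets $q\in\ell^\infty(G,\cK)$ be the projection with $\mathrm{Tr}(q(t))=\varrho(t)$, chooses $N\ge 2|S|$, and sets $p$ equal to the $|S|$-fold direct sum of $q$. Then $p\sim\bigoplus_{s\in S}s.q=:e$ in the Roe algebra simply because $q\sim s.q$ there for every $s$, while the pointwise rank inequality $N\varrho\le\sum_{s\in S}s.\varrho$ gives $p\oplus p\precsim p'\precsim e$ already in $\ell^\infty(G,\cK)$ (with $p'$ the $N$-fold sum of $q$). So $p\oplus p\precsim p$ drops out of three soft comparison facts, with no crossed-product computation. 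Your construction is more explicit and self-contained, and lets you take the smaller $N=|S|+1$; the trade-off is the fibrewise verification of $V_i^*V_i=p$, $pV_i=V_i$, and $V_2^*V_1=0$. Two minor comments: the disjoint-support observation handles the cross terms in $V_i^*V_i$ but not a priori in $V_iV_i^*$; the clean way to get $V_iV_i^*\le p$ is via $pV_i=V_i$, which follows since each $W_n^{(i)}$ has range under $p(t_{n+1})$. Also, when you replace $p$ by $t.p$ in (iii), the translated inequality reads $N(t.\varrho)\le\sum_{s\in S}(ts).\varrho$, not $\sum_{s\in S}(st).\varrho$; this still gives $N\tau(t.p)\le |S|M$, so the iteration goes through unchanged.

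For (iii), the paper iterates the pointwise inequality first, obtaining $N^k\varrho\le\sum_{s\in S^k}\bar{s}.\varrho$, and then applies a single trace to get $N^k\tau(q)\le |S|^k\sup_t\tau(t.q)$. You instead apply the trace once and iterate the resulting bound $M\mapsto(|S|/N)M$; this is equivalent. Your appeal to lower semi-continuity via Lemma~\ref{lm:symmetric-proj} and Theorem~\ref{thm:GKP} is unnecessary here, since Murray--von Neumann subequivalence of projections is respected by any trace.
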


\begin{proof} Note first that if $e$ and $f$ are projections in $\ell^\infty(G,\cK)$, then $e \sim f$, respectively, $e \precsim f$, in $\ell^\infty(G,\cK)$ if and only if $\mathrm{Tr}(e(t)) = \mathrm{Tr}(f(t))$, respectively, $\mathrm{Tr}(e(t)) \le \mathrm{Tr}(f(t))$, for all $t \in G$. 
Let $\varrho \colon G \to \N_0$ be as in Lemma~\ref{lm:fin-gen-inf} with respect to a finite subset $S$ of $G$ and with $N \ge 2|S|$. Find a projection $q \in \ell^\infty(G,\cK)$ with $\mathrm{Tr}(q(t)) = \varrho(t)$, for all $t \in G$. 

Let $p$ and $p'$ be the the $|S|$-fold, respectively, the $N$-fold, direct sum of $q$ with itself in $\ell^\infty(G,\cK)$, and set $e = \bigoplus_{s \in S} s.q$. Then $\mathrm{Tr}(p'(t)) = N\varrho(t)$ and $\mathrm{Tr}(e(t)) = \sum_{s\in S} s.\varrho(t)$, for all $t \in G$, so $p' \precsim e$  and $p \oplus p \precsim p'$ in $\ell^\infty(G,\cK)$. Moreover, $p \sim e$ in $\ell^\infty(G,\cK) \rtimes G$ (since $q \sim t.q$ in $\ell^\infty(G,\cK) \rtimes G$, for all $t \in G$). Hence,
$$p \oplus p \precsim p' \precsim e \sim p,$$
in $\ell^\infty(G,\cK) \rtimes G$, which shows that (i) and (ii) hold. 

(iii) follows from (ii) when $G$ has the fixed-point property for cones, cf.\ Corollary~\ref{cor:b}, but requires a separate argument for general groups. Applying the operator $\sigma \mapsto \sum_{s \in S} s.\sigma$, on functions $\sigma \colon G \to \N_0$, to the left and right hand side of \eqref{eq:Nf} $k-1$ times, we get that $N^k \varrho \le \sum_{s \in S^k} \overline{s}.\varrho$, for all $k \ge 1$, where $S^k$ is the set of $k$-tuples of elements from $S$, and $\overline{s} \in G$ is the product of the $k$ elements in the $k$-tuple $s \in S^k$. For $k \ge 1$, set $e_k = \bigoplus_{s \in S^k} \overline{s}.q$, and let $p'_k$ be the $N^k$-fold direct sum of $q$ with itself. Then $\mathrm{Tr}(p'_k(t)) = N^k \varrho(t)$ and $\mathrm{Tr}(e_k(t)) = \sum_{s \in S^k} \overline{s}.\varrho(t)$, for all $t \in G$, so $p'_k \precsim e_k$.

Let $\tau$ be any trace in $T^+(\ell^\infty(G,\cK))$. Then 
$$N^k \tau(q) = \tau(p'_k) \le \tau(e_k) = \sum_{s \in S^k} \tau(\overline{s}.q) \le |S|^k \sup_{t \in G} \tau(t.q).$$
As this holds for all $k \ge 1$, either $\tau(q)=0$ or $\sup_{t \in G} \tau(t.q) = \infty$. Suppose that $\{\tau(t.p)\}_{t \in G}$ is bounded. Then  $\{\tau(t.q)\}_{t \in G}$ is also bounded. Fix $t \in G$, and let $\tau'$ be the trace on $\ell^\infty(G,\cK)$ given by $\tau'(f) = \tau(t.f)$, for $f \in \ell^\infty(G,\cK)^+$. As $\tau'$ also is bounded on the orbit $\{s.q\}_{s \in G}$, the argument above implies that $0= \tau'(q) = \tau(t.q)$, so $\tau(t.p) = |S|\tau(t.q) = 0$. This proves that $\tau$ is zero on the orbit $\{t.p\}_{t \in G}$.
\end{proof}

\noindent Any projection satisfying  the conclusions of Proposition~\ref{prop:propinfproj} (iii) above must have exponentially growing dimension with respect to any proper length function on the group, cf.\ 
Lemma~\ref{lm:subexpgrowth}. 

{\small{
\bibliographystyle{amsplain}
%\bibliography{operator}
\providecommand{\bysame}{\leavevmode\hbox to3em{\hrulefill}\thinspace}
\providecommand{\MR}{\relax\ifhmode\unskip\space\fi MR }
% \MRhref is called by the amsart/book/proc definition of \MR.
\providecommand{\MRhref}[2]{%
  \href{http://www.ams.org/mathscinet-getitem?mr=#1}{#2}
}
\providecommand{\href}[2]{#2}

}}

\vspace{1cm}

\noindent Mikael R\o rdam \\
Department of Mathematical Sciences\\
University of Copenhagen\\ 
Universitetsparken 5, DK-2100, Copenhagen \O\\
Denmark \\
rordam@math.ku.dk\\

\end{document}